\algrenewcommand\algorithmicrequire{\textbf{Input:}}
\algrenewcommand\algorithmicensure{\textbf{Output:}}
\pgfplotsset{width=8cm,compat=1.9}
\newtheorem{theorem}{Theorem}[section]
\newtheorem{proposition}[theorem]{Proposition}
\newtheorem{corollary}[theorem]{Corollary}
\newtheorem{lemma}[theorem]{Lemma}
\newtheorem{assumption}{Assumption}
\newtheorem{rem}[theorem]{Remark}
\numberwithin{equation}{section}
\renewcommand{\P}{\mathbb{P}}
\newcommand{\E}{\mathbb{E}}
\newcommand{\G}{\mathbb{G}}
\newcommand{\N}{\mathbb{N}}
\newcommand{\eps}{\varepsilon}
\newcommand{\Zset}{\mathbb{V}}
\newcommand{\Zbarset}{\overline{\mathbb{V}}}
\newcommand{\Z}{\nu}
\newcommand{\Zbar}{\bar{\Z}}
\newcommand{\Zhat}{\widehat{\Z}}
\newcommand{\Zbarhat}{\overline{\Zhat}}
\newcommand{\Zhatset}{\mathbb{W}}
\newcommand{\Zbarhatset}{\overline{\Zhatset}}
\newcommand{\minus}{\scalebox{0.35}[1.0]{$-$}}
\newcommand{\ssigma}{\boldsymbol{\sigma}}
\newcommand{\e}{\mathfrak{e}}
\newcommand{\ga}{\widehat{\boldsymbol{\gamma}}}
\begin{document}
\renewcommand{\labelenumii}{(\roman{enumi}).\alph{enumii}}

\title[Spinal study of a population model for colonial species with interactions]
{Spinal study of a population model for colonial species  with interactions and environmental noise}

\author{Sylvain Billiard}
\address{Sylvain Billiard,  Univ. Lille, CNRS, UMR 8198 – Evo-Eco-Paleo, F-59000 Lille, France.}
\email{sylvain.billiard@univ-lille.fr}

\author{Charles Medous}
\address{\parbox{\linewidth}{Charles Medous, Univ. Grenoble Alpes, CNRS, IF, 38000 Grenoble, France and
Univ. Grenoble Alpes, INRIA, 38000 Grenoble, France}}
\email{charles.medous@univ-grenoble-alpes.fr}

\author{Charline Smadi}
\address{Charline Smadi, Univ. Grenoble Alpes, INRAE, LESSEM, 38000 Grenoble, France
and Univ. Grenoble Alpes, CNRS, Institut Fourier, 38000 Grenoble, France}
\email{charline.smadi@inrae.fr}

\date{}

\maketitle

\begin{abstract}
We introduce and study a stochastic model for the dynamics of colonial species, which reproduce through fission or fragmentation. The fission rate depends on the relative sizes of colonies in the population, and the growth rate of colonies is influenced by intrinsic and environmental stochasticities. Our setting thus captures the effect of an external noise, correlating the trait dynamics of all colonies. In particular, we study the effect of the strength of this correlation on the distribution of resources between colonies. We then extend this model to a large class of structured branching processes with interactions in which the particle type evolves according to a diffusion. The branching and death rates are general functions of the whole population. In this framework, we derive a $\psi$-spine construction and a Many-to-One formula, extending previous works on interacting branching processes. Using this spinal construction, we also propose an alternative simulation method and illustrate its efficiency on the colonial population model. The extended framework we propose can model various ecological systems with interactions, and individual and environmental noises. 
 \end{abstract}

 \vspace{0.2in}

\noindent {\sc Key words and phrases}: fusion-fission population dynamics, interacting populations, diffusion process, spinal constructions, Many-to-One formula, simulation method, environmental noise, ecology

\bigskip

\noindent MSC 2000 subject classifications: 92D25, 60J80, 60J85, 60h10, 60J60, 60K37

\renewcommand{\contentsname}{Table of contents}
\renewcommand\bibname{Bibliography}




\section{Introduction}  \label{section: intro}

\par The life cycle of colonial animal species often includes a mode of reproduction through fission or fragmentation after a colony has grown to a sufficiently large size (e.g. social spiders \cite{lubin80,aviles00,durkin23}, ants \cite{finand2024solitary, peeters2010colonial}, corals \cite{devantier1989,lirman03} or colonial tunicates \cite{ben22}). Many plants (e.g. bryophytes \cite{frey2011asexual}) or microbial species \cite{sauer2022biofilm} show similar life cycles patterns.  Describing the dynamics of such species needs to consider the dynamics of two nested levels: the size of each colony which varies because of growth (ontogeny) and fission, and the number of colonies which is governed by colony birth (fission) or death (local extinction). 
\par Few models have considered the demographic particularities of colonial species, and most often for addressing specific questions. Some studies aimed at evaluating the effect of colonies death and fission on the extinction of populations, for instance with environmental or human perturbations, or explaining the observed structuration in size (or age) of colonies found in natural populations \cite{lirman03,lasker90,nakamaru14,linacre2003demographic}. Such models also generally lack widely shared processes in natural populations such as the heterogeneity between colonies that might impact their death or fission rates, the stochasticity of colonial growth, fission and death, and the effect of colony size on competition.  Our goal in this paper is to provide in a single framework a general stochastic model describing the dynamics of a colonial species including all these mechanisms.
\par In our model, we consider a colony as an ‘individual’, itself composed of sub-units such as ‘individual’ cells, branches, ants, spiders or even birds. The trait we consider is associated to the  colony and not to its sub-units. This trait can represent the size or volume of the colony, that is related to the number of sub-units, e.g. the number of workers or reproductive queens in an ant colony. The dynamics of the population of colonies is governed by three mechanisms. First, we suppose that the size of a colony can vary during its lifetime because of intrinsic factors (stochastic birth and death of sub-units) or extrinsic factors (environmental variability) \cite{ben22,boulay10,chen22,lubin80}. This is well represented by the evolution of the traits following a stochastic differential equation (SDE) with intrinsic and environmental components of the fluctuations. Second, the establishment of new colonies follows fission events. Such fissions can be associated to reproductive events such as in some ant species where queens can found new nests accompanied by a variable number of workers coming from the ancestral colonies \cite{finand2024solitary}. Fissions can also be due to fortuitous random events such as when corals are fragmented or spider webs are split in daughter webs \cite{lubin80,durkin23,aviles00}. In any case, the newly established colonies can randomly vary in their initial size, a phenomenon captured by our model as when a birth occurs, the size (or trait) of the colony is split into two random size daughter colonies. Third, we consider competitive interactions between colonies as affecting the fission rate. The fission rate is supposed to be proportional to the relative size of a colony compared to the sum of all other colonies' size. This can represent colonies competing for the resource of their environment as foraging is directly related to the size of the colony, for example the number of foraging ants, the volume of water filtered by branches of coral, the surface of spider webs \cite{tschinkel95,shik08,finand2024solitary,yoshioka98,chen22,aviles98}. To sum up, our model allows to study the population dynamics of colonial species accounting for within and between colonies heterogeneity, in a time varying environment, with competition. We will in particular study the relative roles of the different sources of variability on the maintenance and composition of the population.

More specifically, we consider a structured birth and death process of a population of colonies each characterized by a trait $x$, e.g. their size. We suppose that competition between colonies affects their fission rate such as it is proportional to their relative size $x / R_t$ where $R_t$ is the sum of the sizes of all colonies. At a fission event of a colony of size $x$, resources are shared by the two daughter colonies. One colony gets a fraction $\Theta$ of the size and the other a fraction $(1-\Theta)$ with $\Theta$ a random variable on $(0,1)$.
Between two branching events, the colony sizes evolve according to a diffusion; if at time $t$, $N_t$ denotes the number of colonies in the whole population, $u$ the label of a single colony and $X_t^u$ its size,
\begin{align}\label{eq: eds indiv}
\text{d}X^u_t := aX^u_t\text{d}t + \frac{\sigma}{\sqrt{1+\delta^2}} X^u_t(\text{d}B^u_t + \delta\text{d}W_t)
\end{align}
where $a>0$ is the growth rate of the colony, $\sigma > 0$ is the intensity of the total noise, $(B^u_t, u\in\mathbb{G}(t))$ denotes a $N_t$-dimensional intrinsic Wiener process and $W_t$ an independent extrinsic Wiener process. The parameter $\delta>0$ quantifies the relative intensity of the extrinsic and intrinsic noises. The processes $B^v$ for $v$ describing the set of colonies alive are independent and all independent of $W$. The latter process corresponds to the environment and is common to all colonies alive in the same environment. Note that the value of the correlation factor $\delta$ does not change the law of a colony trait, but the correlation between the dynamics of traits of the different colonies of the population. Depending on $\delta >0$ the noise due to the common environment may be more or less important compared to the independent intrinsic noises in the colonies. 

We establish results about the long term behavior of those interacting colonies, using spine techniques.
Spine techniques have been developed to study Branching diffusions since the work of Chauvin and Rouault \cite{Chauvin88} who established limit theorems for the law of the position of the right-utmost particle. Such methods are based on a Girsanov change of probability measure associated with a distinguished individual called the spine. 
Branching Brownian motion with constant division rate is related to the solution of Kolmogorov–Petrovskii–Piskounov (KPP) diffusion equations \cite{mckean75} and spine techniques have been used to establish the existence, asymptotics, and uniqueness of traveling wave solutions of the KPP equation \cite{harris99, kyprianou04}. We also refer to Hardy and Harris \cite{hardy06} for an intuitive, spine-based proof of a large deviations result in non-structured branching Brownian motion. This framework has been extended to structured branching diffusion, where the trait follows an Ornstein–Uhlenbeck process, with results that include the long-term behavior of the system, such as the speed of colonization in space and type dimensions and population growth rates within this asymptotic shape \cite{git07}.
The general case of structured branching diffusion has been studied by Hardy and Harris \cite{Hardy09}, who used this spinal decomposition to prove the $L^p$-convergence of some key martingales, which was later used to establish strong laws of large numbers \cite{englander2010strong}. 

We refer to Bansaye and coauthors \cite{BT11,bansaye2013extinction} for a spine-based asymptotic study of the behavior of parasite infection in cells, which follows a Feller diffusion with trait-dependent multiplicative jumps. Later, Cloez \cite{Cloez17} studied a structured branching process where the individual traits follow a general Markov process and determined the asymptotic behavior of the particle system with spine methods. In a similar framework, we also refer to Marguet \cite{marguet19}, who studied a sampled individual in the asymptotic population.

The martingale change of measure associated with the spinal construction involves either an eigenfunction $\psi$ of some operator $\mathcal{G}$ related to the branching mechanisms \cite{biggins2004measure, englander2007branching, Cloez17}, or a Feynman-Kac path-integral formula \cite{harris96, marguet19, B21, medous23}.
In the case of Galton-Watson trees and non-structured branching processes, the constant function $\psi \equiv 1$ is an eigenfunction of the operator $\mathcal{G}$. For structured branching diffusion, Cloez \cite{Cloez17} exhibited eigenfunctions of $\mathcal{G}$ for various branching mechanisms. In the case of branching processes with interactions, the interplay between the individuals is more complex, and it is arduous to find an explicit eigenfunction. Bansaye \cite{B21} established that, for a non-structured branching process with size-dependent interactions, the function $\psi = 1/N_t$ where $N_t$ denotes the number of individuals in the population is always an eigenfunction of $\mathcal{G}$. However, incorporating a trait dependency into the branching mechanisms compromises this property, resulting in the absence of general eigenfunctions \cite{bansaye22}. Our motivating model of colonial species does not fit within the framework of the aforementioned studies, which must be extended to encompass a broader class of structured branching processes with interactions.

The article is organized as in the following. We first introduce our stochastic model of a population of a colonial species. We then establish a spinal construction of the latter, that enables us to study long-term properties of the population, thanks to the use of Many-to-One formulas and the study of the (simpler) spinal process. In particular we show that if the intrinsic variability is large while the environmental variability is of a smaller order, most of the resources are held by a unique colony. Otherwise, the resources are more evenly shared among colonies. We then use the spine process to propose an alternative simulation method of this colonial dynamics, that we compare to the classical method of birth and death process simulation.
Finally we extend the generality of this method by derivating a spine construction with a general $\psi$ function- that will be referred to as a $\psi$-spine construction- in the context of branching diffusion with interactions. In this extended setting, we consider a wide class of continuous-time structured branching diffusions with general interactions where each individual (instead of 'colonies' in the extended model to be as general as possible) is characterized by its trait, taking values in a subset of $\mathbb{R}^d$. The lifespan of each individual is exponentially distributed with a time-inhomogeneous rate that depends on the traits of all individuals. Upon an individual's death, a random number of children are generated, each inheriting random traits at birth. Between these branching events, the evolution of the traits of all individuals in the population follows a $N_t$-dimensional SDE where $N_t$ is the population size. This SDE depends on the traits of all the individuals in the population, extending previous works \cite{B21, medous23} where trait dynamics were deterministic, to include many physical models based on Brownian motion, see \textit{e.g.} \cite{damos11} and the references therein.
In this general framework, Many-to-One formulas linking the behavior of the original and the spinal processes thus appear as a promising tool to model and study various ecological systems that include interactions, as well as individual and environmental noises.

\section{A colonial population correlated through a common environment.}\label{section: example}

\subsection{Description of the model}
We consider a population $\nu_t$ at time $t$, composed of $N_t \geq 0$ colonies. 
Each colony is associated to a unique label $u\in\mathcal{U}$, where the set $\mathcal{U}$ is defined according to the Ulam-Harris-Neveu notation:
$$\mathcal{U} := \left\{\emptyset\right\} \cup \bigcup_{k \geq 0}\left(\mathbb{N}^*\right)^{k+1}.$$
The set of labels of the colonies present in the population at time $t$ is denoted $\mathbb{G}(t) \subset\mathcal{U}$. We denote $X^u_t \in \mathbb{R}_+^*$ the trait of a colony labeled by $u \in \mathbb{G}\left( t\right)$. $X^u_t$ can represent the mass or size of colony $u$, or its total quantity of resources.
Using these notations, we can write 
\begin{equation*}
\Z_t := \sum_{u \in \mathbb{G}\left( t\right)}\delta_{X^u_t} \in \mathbb{U}
\end{equation*}
where $\mathbb{U}$ denotes the set composed of all finite point measures on $\mathbb{R}_+^*$ defined as
\begin{equation*}
\mathbb{U} := \Big\{\sum_{i=1}^N \delta_{x^i},\ N \in \mathbb{N}, \ \left(x^i, 1 \leq i \leq N\right) \in  \left(\mathbb{R}_+^*\right)^{N} \Big\}.
\end{equation*}
We consider a dyadic birth and death process such that a colony of resource $x \in \mathbb{R}_+^*$ dies at rate $\mu$ and splits at rate $\lambda x / R_t$ where $R_t := \langle\nu_t,I_d\rangle$ denotes the total amount of resources in the population and $\lambda$ is a positive parameter. 
When a colony of size $x$ splits, the newly founded colony inherits $\Theta x$ resources from the parent colony, while the latter keeps $(1-\Theta)x$ resources, with $\Theta$ a random variable on $[0,1]$ of probability measure $q$.
Between two branching events at times $T_i$ and $T_{i+1}$, the resources evolve according to a SDE \eqref{eq: eds indiv}.

One of the interesting features of the model we have just
introduced is that we take into account the competition for space
or resources in the reproduction rate. This
is technically more difficult to handle than a model taking
competition into account in the death term, (which corresponds to the most usual choice, the prototypical
example being the competitive Lotka-Volterra model \cite[Chapter 7.8]{allen2010introduction}) but is
much more realistic for many species (for instance the ones in which larvae compete for resources and have little chance of reaching maturity).
A second interesting feature of the model is that it allows taking into account any law of resource sharing between two offspring colonies at a fission event. Recent works \cite{marguet2024parasite}, considering structured branching processes without interaction, have shown that it has a large impact on the long time behavior of the population. We will present some results in the same direction.
Finally, we consider noise arising from two distinct sources: independent internal sources within individual colonies and a shared environmental source affecting all colonies.
The question of interaction and relative effects of intrinsic and environmental stochasticities is a major question in ecology \cite{lande1998demographic,bonsall2004demographic,wilmers2007perfect}.
By letting vary the parameter $\delta$ we can modify the level of correlation of the noises impacting the dynamics of the colonies, and study the effect of this correlation.

\subsection{Spinal Construction}\label{section: spine construct ex}
In this section, we introduce the spine process $(\bar{\nu}_t, t\geq 0)$ associated with the population process and give the relationship between the law of this spine process (that does not describe any real population) and the law of the population process. In the spine process, we will also refer to the elements in the population as colonies described by a trait in $\mathbb{R}_+^*$.
The associated spinal process $((E_t,\Zbarhat_t),t\geq 0)$ distinguishes a particular colony -the spine- of label $E_t$ from the set of labels $\widehat{\mathbb{G}}(t)$ in the spinal population $\Zbarhat_t$. In order to have a simple distinctive feature between both processes, we will denote by $\widehat{\nu}_t$ the marginal spinal population, and put a hat on each value that refers to the spinal process. At all time, we will denote by $\widehat{Y}_t$ the trait of the spine $E_t$ and by $\widehat{X}^u_t$ the trait of the colony of label $u$ belonging to $\widehat{\mathbb{G}}(t)\setminus \{E_t\}$ in the spinal population.
The SDE describing the evolution of the traits between two branching events involves an additional drift term compared to the resources dynamics in the biological process:
\begin{equation}\label{eq: eds hors spine}
\text{d}\widehat{X}^u_t := \left(a + \sigma^2 \frac{\delta^2}{1 + \delta^2} \right)\widehat{X}^u_t\text{d}t + \frac{\sigma}{\sqrt{1+\delta^2}} \widehat{X}^u_t\left(\text{d}\widehat{B}^u_t + \delta\text{d}\widehat{W}_t\right), \quad \text{for all } u\in\widehat{\mathbb{G}}(t)\backslash\{E_t\},
\end{equation}
and
\begin{equation}\label{eq: eds spine}
\text{d}\widehat{Y}_t := \left(a+\sigma^2\right)\widehat{Y}_t\text{d}t +  \frac{\sigma}{\sqrt{1+\delta^2}} \widehat{Y}_t\left(\text{d}\widehat{B}^*_t + \delta\text{d}\widehat{W}_t\right)
\end{equation}
where $\widehat{B}^*, \widehat{B}^u, \widehat{W}$ are independant Wiener processes.
The initial spine is chosen randomly in the initial population $\widehat{\nu}_0$ among the initial labels $\widehat{\mathbb{G}}(0) := \{1, \cdots, \langle\widehat{\nu}_0,1\rangle\}$, with a probability proportional to its trait. That is, for all $j \in \widehat{\mathbb{G}}(0)$, 
\begin{equation}\label{eq: init spine}
\mathbb{P}\left(E_0=j\right) = {X^j_0}/{\langle\widehat{\nu}_0,I_d\rangle}.
\end{equation}
Each colony of trait $x$ in the spine process divides into two children at a rate $\lambda x / \widehat{R}_t$, where $\widehat{R}_t:= \langle\widehat{\nu}_t,I_d\rangle $ is the sum of the traits of all colonies in the spine process. Any non-spinal colony dies at rate $\mu$ while the spine does not go extinct. If the spine divides at time $t$, the new spine is chosen between the two children with probability proportional to their traits, such that
\begin{equation}\label{eq: next spine choice}
\mathbb{P}\left(E_t=E_{t-}i\right) = \frac{\widehat{X}^{E_{t-}i}}{\widehat{X}^{E_{t-}1} + \widehat{X}^{E_{t-}2}}, \quad \text{for } i \in \{1,2\}.
\end{equation}

We can now introduce the theorem that links the law of the non-extinct biological process of interest, that properly describes the colonies dynamics, and the law of the spine process. 
We denote by $\widehat{N}_t$ the number of colonies at time $t\geq 0$ in the spinal process.
\begin{theorem}\label{thm: sampling colony}
    For $U_t$ a colony picked uniformly at random in $\mathbb{G}(t)$ and for every measurable function $F$ on the set of càdlàg trajectories on $\mathbb{R}_+^*\times\mathbb{U}$, we have
\begin{equation}\label{eq: sampling}
    \mathbb{E}_{\nu_0}\left[\mathbbm{1}_{\left\{\mathbb{G}(t) \neq \emptyset\right\}}F\left(X^{U_t}_t,(\Z_s,s\leq t)\right) \right] =  R_0e^{(a-\mu)t} \mathbb{E}_{\nu_0}\left[\frac{1}{\widehat{Y}_t\widehat{N}_t} F\left(\widehat{Y}_t,(\Zhat_s, s\leq t)\right)\right].
\end{equation}
\end{theorem}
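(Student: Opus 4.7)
The plan is to prove~\eqref{eq: sampling} via a martingale change of probability measure that embeds the spine into the biological process.

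First, I would identify the mean-one martingale driving the spine construction. Applying Itô's formula to~\eqref{eq: eds indiv} summed over $u \in \mathbb{G}(t)$, using the mass-conservation $\Theta x + (1-\Theta)x = x$ at each fission and the compensator $\mu R_t\, \ud t$ of the total death intensity, one checks that $M_t := R_t\, e^{-(a-\mu)t}/R_0$ is a positive $\mathbb{P}$-martingale with $M_0 = 1$. Enlarging the state space by an auxiliary label $U_t$ uniformly distributed on $\mathbb{G}(t)$ given $\mathcal{F}_t$, the random functional
\[
L_t := \frac{N_t X^{U_t}_t}{R_0}\, e^{-(a-\mu)t}
\]
is also a mean-one positive martingale, since averaging $L_t$ over $U_t$ conditionally on $\mathcal{F}_t$ recovers $M_t$.

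Next, set $\ud \widehat{\mathbb{P}} := L_t\, \ud \mathbb{P}$ on the enlarged space. Then the left-hand side of~\eqref{eq: sampling} can be rewritten as
\[
\mathbb{E}_{\nu_0}\Big[\mathbbm{1}_{\{\mathbb{G}(t) \neq \emptyset\}} F(X^{U_t}_t, (\nu_s, s \leq t))\Big] = R_0\, e^{(a-\mu)t}\, \widehat{\mathbb{E}}\Big[\frac{F(X^{U_t}_t, (\nu_s, s \leq t))}{X^{U_t}_t\, N_t}\Big],
\]
so it suffices to show that under $\widehat{\mathbb{P}}$ the decorated process $(\nu_t, U_t)_{t \geq 0}$ has the same law as the spinal process $(\widehat{\nu}_t, E_t)_{t \geq 0}$ of Section~\ref{section: spine construct ex}.

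This identification splits into a continuous part and a jump part. For the continuous part, computing $\ud [L, W]_t / L_t = \sigma \delta / \sqrt{1+\delta^2}\, \ud t$ and $\ud [L, B^u]_t / L_t = \sigma/\sqrt{1+\delta^2}\, \mathbbm{1}_{\{u = U_t\}}\, \ud t$, Girsanov's theorem yields that $W$ acquires the drift $\sigma\delta/\sqrt{1+\delta^2}$ seen by every individual, while each $B^u$ acquires the drift $\sigma/\sqrt{1+\delta^2}$ only when $u = U_t$. Substituting into~\eqref{eq: eds indiv} produces an additional drift $\sigma^2\delta^2/(1+\delta^2)\, X^u_t$ for non-spine individuals, recovering~\eqref{eq: eds hors spine}, and an additional drift $\sigma^2 X^{U_t}_t$ for the spine, recovering~\eqref{eq: eds spine}. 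For the jump part, inspecting the ratio $L_t/L_{t-}$ at each event type shows that: non-spine deaths still occur at rate $\mu$; death of the tagged index $U_t$ is excluded since it would send $L$ to zero; fission rates $\lambda X^u/R_t$ are preserved; at a fission of $U_t$, the new label is assigned to daughter $i$ with probability proportional to $X^{E_{t-}i}$, yielding~\eqref{eq: next spine choice}; and $\widehat{\mathbb{P}}(U_0 = j) = L_0\, \mathbb{P}(U_0 = j) = X^j_0/R_0$ matches~\eqref{eq: init spine}.

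The main technical obstacle lies in the jump analysis: at a fission of $U_t$, the trait $X^{U_t}$, the population size $N_t$, and the label $U_t$ all evolve simultaneously, so the compensators of the marked Poisson random measures driving fission and death events must be tracked carefully and in consistence with the Girsanov drift of the previous step. An alternative, perhaps cleaner route, would be to compute the infinitesimal generator of the Markov process $(E_t, \widehat{\nu}_t)$ directly from Section~\ref{section: spine construct ex} and to compare it with the generator of $(U_t, \nu_t)$ under $\widehat{\mathbb{P}}$, concluding by uniqueness of the corresponding martingale problem. Either way, once the two decorated processes have been matched in law, \eqref{eq: sampling} follows from the change-of-measure identity above.
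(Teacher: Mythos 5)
Your overall strategy (a Girsanov-type change of measure, identification of the added drifts for the Brownian parts, and a jump-rate comparison) is the same route the paper takes, and your drift computations for $W$ and $B^{U}$ correctly reproduce \eqref{eq: eds hors spine} and \eqref{eq: eds spine}. However, the object you build the change of measure on is wrong, and this is a genuine gap. The functional $L_t := N_t X^{U_t}_t e^{-(a-\mu)t}/R_0$ is \emph{not} a martingale under any consistent definition of $(U_t)_{t\ge 0}$: if $U_t$ is re-drawn uniformly on $\mathbb{G}(t)$ at each time, the process is not adapted to a common filtration and its increments are not centered (the ancestor of a uniformly picked individual is \emph{not} uniform at earlier times — it is size-biased by its number of descendants); if instead $U$ is a genuine line of descent chosen forward in time, it cannot be uniform on $\mathbb{G}(t)$ for all $t$ simultaneously. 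Moreover, even taken at face value your $L$ contradicts your own jump analysis: a non-spine death multiplies $L$ by $(N_{t-}-1)/N_{t-}$, so under $\ud\widehat{\mathbb{P}}=L_t\,\ud\mathbb{P}$ non-spine deaths would occur at rate $\mu (N-1)/N$ rather than $\mu$, and non-spine fissions at rate $\lambda x (N+1)/(N R)$ rather than $\lambda x/R$ — not the spine dynamics of Section \ref{section: spine construct ex}.

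The fix is precisely what the paper does. The factor $N_t$ must not appear in the density process at all: the correct density is built from the function $\psi(x)=x$, which one checks is an eigenfunction of the operator $\mathcal{G}$ of \eqref{def: mathcal G} with eigenvalue $a-\mu$ (because fission conserves mass and death occurs at constant rate $\mu$), together with trait-proportional selection probabilities for the spine at each fission (your rule \eqref{eq: next spine choice} — which, note, is itself incompatible with $U_t$ being uniform on $\mathbb{G}(t)$ under $\widehat{\mathbb{P}}$). The resulting martingale on the enriched space projects onto $\mathcal{F}_t$ as $R_te^{-(a-\mu)t}/R_0$, and the uniform pick at the terminal time enters only through the reweighting factor $p_{E_t}(\Zbarhat_t)=1/\widehat{N}_t$ in the Radon--Nikodym density $\xi$ of Theorem \ref{thm: sampling}. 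The paper's proof of Theorem \ref{thm: sampling colony} then reduces to: writing the model in the vectorial form \eqref{eq: eds trait} with the diffusion matrix \eqref{diff matrix}, verifying the eigenfunction property $\mathcal{G}\psi=(a-\mu)\psi$, computing the added drift $\ssigma\ssigma^T\nabla\psi/\psi$ to get \eqref{eq: eds hors spine}--\eqref{eq: eds spine}, and invoking the general Theorem \ref{thm: sampling}, whose proof handles exactly the induction over jump times and the Girsanov step between jumps that your argument glosses over. Your suggested fallback via a martingale-problem/generator comparison could also work, but it still requires the correct density process and a well-posedness argument, neither of which is supplied.
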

Notice that this equality in law holds under the event of survival of the population. 
The Many-to-One formula associated to this $\psi$-spine construction can be deduced from Theorem \ref{thm: sampling colony} by taking the measurable function $F$ defined on the set of càdlàg trajectories on $\mathbb{R}_+^*\times\mathbb{U}$ as
    $$
    F\left(X^{U_t}_t,(\Z_s,s\leq t)\right) := f\left(\left(X^{U_t}_s, \nu_s\right),s\leq t\right)\widehat{N}_t.
    $$
\begin{corollary}\label{cor: mto colony}
    For every measurable function $f$ on the set of càdlàg trajectories on $\mathbb{R}_+^*\times\mathbb{U}$:
\begin{equation}\label{eq: mto}
    \mathbb{E}_{\nu_0}\left[\mathbbm{1}_{\left\{\mathbb{G}(t) \neq \emptyset\right\}}\sum_{u\in\mathbb{G}(t)}f\left(\left(X^u_s, \nu_s\right),s\leq t\right) \right] =  R_0e^{(a-\mu)t} \mathbb{E}_{\nu_0}\left[ \frac{f\left(\left(\widehat{Y}_s, \widehat{\nu}_s\right), s\leq t\right)}{\widehat{Y}_t} \right].    
\end{equation}
\end{corollary}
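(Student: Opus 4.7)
The plan is to apply Theorem~\ref{thm: sampling colony} to the particular measurable functional suggested immediately above the corollary and to observe that all the required cancellations happen by inspection. Concretely, I define, for a càdlàg path $(x_s, \bar{\nu}_s)_{s \leq t}$ with values in $\mathbb{R}_+^* \times \mathbb{U}$,
\begin{equation*}
F\bigl((x_s, \bar{\nu}_s), s\leq t\bigr) := f\bigl((x_s, \bar{\nu}_s), s\leq t\bigr) \, \langle \bar{\nu}_t, 1 \rangle,
\end{equation*}
which is measurable because the evaluation map $\bar{\nu} \mapsto \langle \bar{\nu}_t, 1\rangle$ is measurable on the Skorokhod space of $\mathbb{U}$-valued càdlàg trajectories. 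Applied to the biological (respectively, spinal) process, this functional carries the multiplicative factor $N_t$ (respectively, $\widehat{N}_t$) required to trigger the cancellations below.

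Plugging this $F$ into~\eqref{eq: sampling}, I treat the two sides separately. On the left-hand side, conditioning on the trajectory $(\Z_s, s \leq t)$ and using that $U_t$ is drawn uniformly in $\mathbb{G}(t)$,
\begin{equation*}
\mathbb{E}_{\nu_0}\!\left[ f\bigl((X^{U_t}_s, \Z_s), s\leq t\bigr) \, N_t \,\Big|\, (\Z_s, s \leq t) \right] = \sum_{u \in \mathbb{G}(t)} f\bigl((X^u_s, \Z_s), s\leq t\bigr),
\end{equation*}
since averaging $N_t$ uniformly-weighted terms cancels the $N_t$ prefactor up to the sum itself. Multiplying by $\mathbbm{1}_{\{\mathbb{G}(t) \neq \emptyset\}}$ and taking the total expectation reproduces exactly the left-hand side of~\eqref{eq: mto}. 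On the right-hand side of~\eqref{eq: sampling}, the factor $\widehat{N}_t$ arising from $F(\widehat{Y}_t, (\Zhat_s, s \leq t))$ cancels the $\widehat{N}_t$ in the denominator $1/(\widehat{Y}_t \widehat{N}_t)$, leaving precisely $f\bigl((\widehat{Y}_s, \widehat{\nu}_s), s\leq t\bigr)/\widehat{Y}_t$, and the prefactor $R_0 e^{(a-\mu)t}$ is preserved.

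The only piece of bookkeeping to watch is the interpretation of the first argument of $F$ as the ancestral trajectory $(X^{U_t}_s, s \leq t)$ of the sampled particle, which is well-defined and measurable once $U_t$ and the genealogy encoded in $(\Z_s, s \leq t)$ are given; the analogous convention on the spinal side identifies this lineage with $(\widehat{Y}_s, s \leq t)$. Since all the analytical substance has already been absorbed by Theorem~\ref{thm: sampling colony}, no genuine obstacle is anticipated: the derivation is effectively a one-step specialization and the only non-automatic verification is that the functional $F$ is Skorokhod-measurable, which is immediate.
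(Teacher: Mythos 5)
Your proof is correct and follows exactly the route the paper itself indicates: it specializes Theorem~\ref{thm: sampling colony} to the functional $F = f \cdot \langle \bar{\nu}_t, 1\rangle$ (i.e.\ $f$ times the population size at time $t$), so that the uniform sampling of $U_t$ converts the left-hand side into the sum over $\mathbb{G}(t)$ while the factor $\widehat{N}_t$ cancels in the spinal expectation. The two observations you single out --- measurability of the evaluation $\bar{\nu}\mapsto\langle\bar{\nu}_t,1\rangle$ and the reading of the first argument as the ancestral lineage --- are precisely the only points needing care, and you handle both.
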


Corollary \ref{cor: mto colony} is a key result, at the basis of the next two sections.
It allows us to derive some results on the long-time behavior of the population stated in Section \ref{section: long-time}.
Besides, from Corollary \ref{cor: mto colony}, we can deduce an algorithmic method to efficiently estimate some quantities over the population (Section \ref{section_simu}).

\subsection{Long-time behavior of the population}\label{section: long-time}
Formula \eqref{eq: mto} allows us to link the expectation of key quantities of the dynamics of the process to expectations related to the spinal process. Let us first focus on quantities describing the global dynamics of the population process, that is the population size and the total population amount of resources.
\begin{proposition}\label{prop: basic qts}
The population size $N_t$ follows the law of an $M/M/\infty$ queue with arrival rate $\lambda$ and service rate $\mu$. The mean extinction time is 
\begin{equation} \label{mean_ext_time}
    \E_{\Z_0}[T_0] = \sum_{k = 0}^{N_0-1}\frac{k!}{\lambda(\lambda/\mu)^k}\sum_{j \geq k+1}\frac{(\lambda/\mu)^j}{j!}
\end{equation}
and the expected size satisfies
\begin{equation}\label{eq: mean pop size}
 \lambda te^{-\mu t} + N_0e^{-\mu t} \leq \mathbb{E}_{\nu_0}\left[ N_t\right] \leq f(t) + N_0e^{-\mu t}
\end{equation}
where the function $f$ satisfies $f(t) = O(1/t)$ as $t$ goes to infinity.
\end{proposition}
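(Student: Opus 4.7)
My plan is first to identify the size process $N_t$ as a birth--death chain and deduce parts (i) and (ii) from one-dimensional Markov chain formulas, then to derive the bounds on $\E[N_t]$ from a first-moment ODE combined with a decay estimate on the survival probability $p(t) := \P(N_t \geq 1)$.

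For the identification, I would sum the individual fission rates $\lambda X^u_t / R_t$ over $u \in \mathbb{G}(t)$ to see that the total collapses to $\lambda$ whenever $N_t \geq 1$, independently of the trait configuration, while the total death rate is $\mu N_t$. Hence $N_t$ is a pure birth--death chain on $\N$ with birth rate $\lambda$ (when $N_t \geq 1$) and linear death rate $\mu N_t$, i.e.\ an $M/M/\infty$ queue (with state $0$ absorbing, since $R_t = 0$ there). For the mean extinction time, setting $\tau_k := \E_k[T_{k-1}]$, a one-step analysis gives the recursion $\tau_k = 1/(\mu k) + (\lambda/(\mu k))\,\tau_{k+1}$, which iterates to
\[
\tau_k = \frac{(k-1)!}{\lambda \rho^{k-1}} \sum_{m \geq k} \frac{\rho^m}{m!}, \qquad \rho := \lambda/\mu.
\]
Summing $\E_{N_0}[T_0] = \sum_{k=1}^{N_0} \tau_k$ and shifting the index $k' = k-1$ produces the stated formula; in particular $\E_{N_0}[T_0] < \infty$.

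For $\E[N_t]$, Dynkin's formula applied to $f(n) = n$ gives $m'(t) = \lambda p(t) - \mu m(t)$ with $m(t) := \E[N_t]$, integrating to
\[
m(t) = N_0 e^{-\mu t} + \lambda \int_0^t e^{-\mu(t-s)}\, p(s)\, ds. \qquad (\star)
\]
The lower bound follows immediately: differentiating $q_0(t) := \P(N_t = 0)$ gives $q_0'(t) = \mu q_1(t)$, hence $p'(t) = -\mu q_1(t) \geq -\mu p(t)$, and Gr\"onwall yields $p(t) \geq e^{-\mu t}$ (since $p(0) = 1$ for $N_0 \geq 1$); substituting into $(\star)$ produces the claimed $\lambda t e^{-\mu t}$ contribution.

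The hard part is the upper bound. Since $\int_0^\infty p(s)\, ds = \E_{N_0}[T_0] < \infty$ and $p$ is non-increasing (once extinct, always extinct), the standard chain $(t/2)\, p(t) \leq \int_{t/2}^t p(s)\, ds \leq \int_{t/2}^\infty p(s)\, ds \to 0$ gives $p(t) = o(1/t)$. Splitting the integral in $(\star)$ at $t/2$ then bounds
\[
\lambda \int_0^t e^{-\mu(t-s)}\, p(s)\, ds \leq \lambda\, e^{-\mu t/2}\, \E_{N_0}[T_0] + \frac{\lambda}{\mu}\, p(t/2) = o(1/t),
\]
so one may take $f(t) := \lambda\bigl( e^{-\mu t/2}\, \E_{N_0}[T_0] + p(t/2)/\mu \bigr)$, which is the announced $O(1/t)$. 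The main obstacle is precisely this step: establishing $p(t) = o(1/t)$ requires combining integrability of $p$ (via part (ii)) with its monotonicity.
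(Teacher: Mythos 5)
Your proof is correct and follows the same overall skeleton as the paper's — identify $N_t$ as an $M/M/\infty$ queue absorbed at $0$, compute $\E_{\Z_0}[T_0]$, write the first-moment ODE $m'(t)=\lambda p(t)-\mu m(t)$ with $p(t)=\P(\mathbb{G}(t)\neq\emptyset)$, and sandwich $p$ — but the execution of three sub-steps is genuinely different. For the extinction time, the paper simply cites a queueing reference, whereas you rederive \eqref{mean_ext_time} from the one-step recursion $\tau_k=\tfrac{1}{\mu k}+\tfrac{\lambda}{\mu k}\tau_{k+1}$; your closed form does satisfy the recursion and resums to the paper's expression, making the proof self-contained (the only loose end is that the forward recursion alone does not pin down $\tau_k$ without checking that the remainder $\rho^n\tfrac{(k-1)!}{(k+n-1)!}\tau_{k+n}$ vanishes, which is routine here). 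For the lower bound $p(t)\geq e^{-\mu t}$, the paper takes a surprisingly roundabout path through the Many-to-One formula, Jensen's inequality for $\E[1/\widehat R_t]$ and an It\^o computation before setting $\sigma=0$; your one-line argument $p'(t)=-\mu\,\P(N_t=1)\geq-\mu p(t)$ is much more elementary and buys the same bound. For the upper bound, the paper uses Markov's inequality $p(t)\leq\E_{\Z_0}[T_0]/t$ and then integrates $1\wedge C/s$ against $e^{-\mu(t-s)}$, which forces it through the exponential integral $E_i$ and its asymptotics; your route via monotonicity plus integrability of $p$ (giving $p(t)=o(1/t)$) and a split of the convolution at $t/2$ avoids special functions entirely and even yields the slightly stronger $f(t)=o(1/t)$. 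Both arguments are valid; yours is more self-contained and elementary, the paper's gives a fully explicit constant in the $O(1/t)$ bound via Markov's inequality.
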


\begin{rem}
An $M/M/\infty$ queue with arrival rate $\lambda$ and service rate $\mu$ is a time changed logistic process of birth rate $\lambda$, death rate $0$ and competition parameter $\mu$. Indeed, if the population is in state $N$, the next state of the population is $N+1$ with probability 
${\lambda}/{\mu N}= {\lambda N}/{\mu N^2}$
for both $M/M/\infty$ queue and logistic process. Since the jump rates are higher in the case of the logistic process (multiplied by a factor $N$ when the population is in state $N$), we can deduce that the $M/M/\infty$ queue has an extinction time larger that the one of the mentioned logistic process.
\end{rem}

A first consequence of Proposition \ref{prop: basic qts} is that the law of $N_t$ only depends on the parameters $\lambda$ and $\mu$, and not on the noise parameters of the dynamics, $\sigma$ and $\delta$. This contrasts with classical results on the role of noise on the population persistence, which conclude that a higher noise increases the population death rate (see \cite{conner99} for instance). The main difference of our model with cited works is that there is a strong interaction between colonies to have access to the resource, which translates into a competition in the birth rate. This fact illustrates that taking into account interactions between colonies has a major effect on the population dynamics, and that making the approximation that colonies behave independently may lead to erroneous conclusions.
\begin{figure}[h!]
    \centering
    \includegraphics[width=0.97\linewidth]{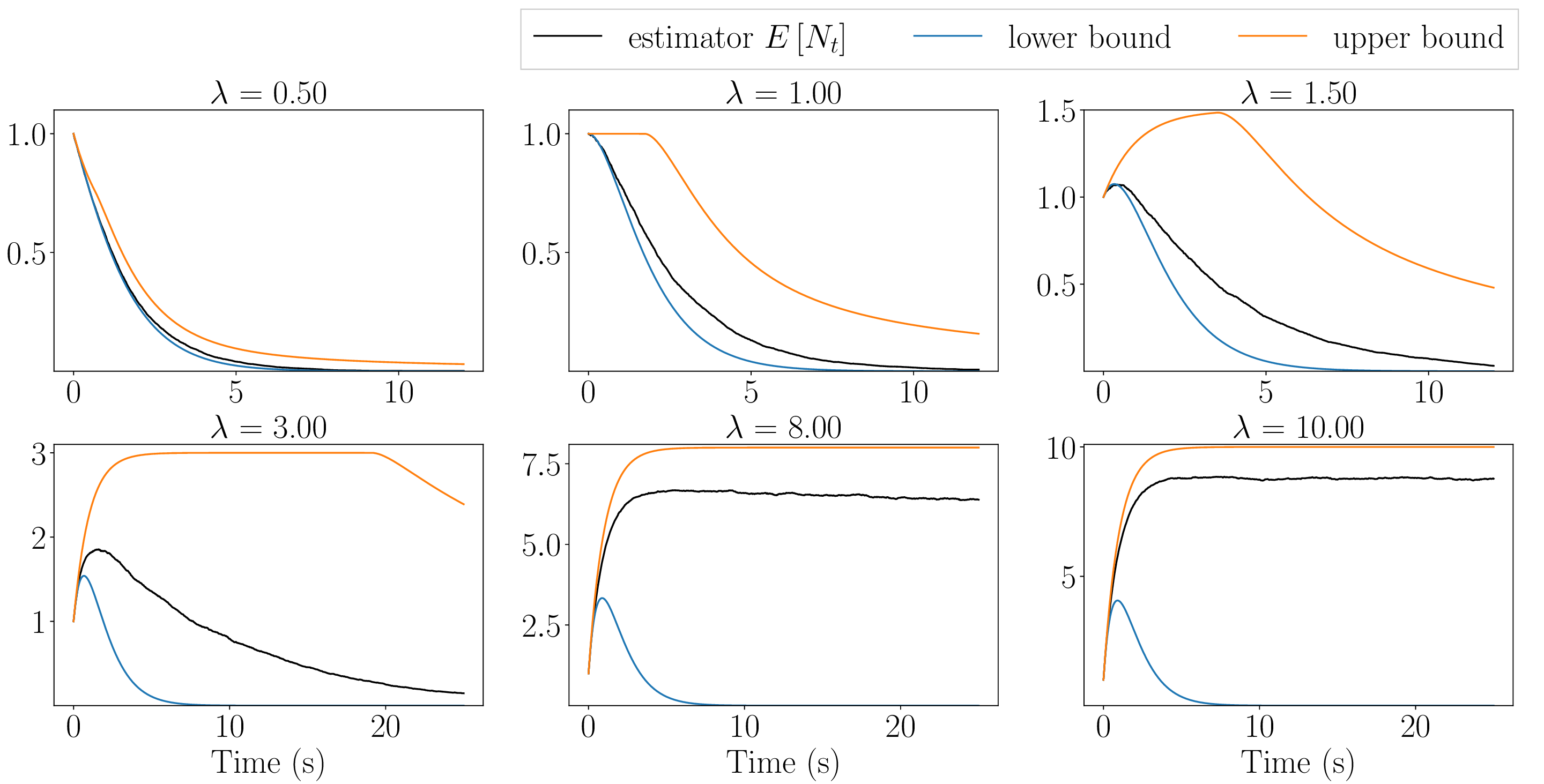}
    \caption{Monte-Carlo estimation of $\E[N_t]$ using $M = 10^4$ trajectories, for different values of the division rate $\lambda$. Other parameters are $\mu=1$ and $N_0= 1$.}
    \label{fig: simu E N_t}
\end{figure}

From \eqref{eq: mean pop size}, we see that the mean population size is bounded by a quantity that decreases as $1/t$ 
for large $t$ whereas the lower bound decreases to $0$ exponentially fast. In fact, we cannot find more precise boundaries without disjointing cases, because all the asymptotics between the two bounds are possible, as illustrated by Figure \ref{fig: simu E N_t}. This depends on the value of $\lambda$ compared to $\mu$. 

\begin{proposition} \label{prop:var}
The expectation of the resources $R_t$ in the original population verifies
    \begin{equation}\label{eq: basic qts}
    \mathbb{E}_{\nu_0}\left[ R_t\right] = R_0e^{(a-\mu)t}.
\end{equation}
Besides, the variance of the total resource in the population satisfies
\begin{eqnarray*}
        \frac{\mu +\frac{\sigma^2}{1 + \delta^2}}{R_0\left(\mu + \frac{\sigma^2}{1 + \delta^2} - 2 \lambda \E[\Theta(1-\Theta)]\right)}\left(e^{\left( \mu +\sigma^2 -  2 \lambda \E[\Theta(1-\Theta)] \right)t} - e^{\frac{\sigma^2\delta^2}{1 + \delta^2}t} \right) + e^{\frac{\sigma^2\delta^2}{1 + \delta^2}t} \\
        \leq  \frac{\mathbb{V}ar_{\nu_0}\left[R_t \right]}{R_0^2e^{2(a-\mu)t}}+1 \leq \frac{1}{R_0}e^{\left( \mu +\sigma^2 \right)t} + \frac{R_0 -1}{R_0}e^{\frac{\sigma^2\delta^2}{1 + \delta^2} t}.
\end{eqnarray*}
\end{proposition}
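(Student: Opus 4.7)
The plan is to apply Corollary \ref{cor: mto colony} twice: once to obtain $\mathbb{E}_{\nu_0}[R_t]$, and once to reduce $\mathbb{E}_{\nu_0}[R_t^2]$ to an expectation in the (simpler) spinal process, then to derive and bound a small coupled system of ODEs on the spine side.

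For the mean, choosing $f((X^u_s,\nu_s)_{s\le t}) := X^u_t$ in Corollary \ref{cor: mto colony} turns $\sum_{u\in\mathbb{G}(t)} X^u_t$ into $R_t$ and collapses the right-hand side to $R_0 e^{(a-\mu)t}\,\mathbb{E}_{\nu_0}[\widehat Y_t/\widehat Y_t] = R_0 e^{(a-\mu)t}$; since $R_t = 0$ on $\{\mathbb{G}(t)=\emptyset\}$, the indicator can be dropped, giving \eqref{eq: basic qts}. For the variance, the key algebraic identity is $R_t^2 = \sum_{u\in\mathbb{G}(t)} X^u_t\,\langle\nu_t,I_d\rangle$. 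Applying Corollary \ref{cor: mto colony} with $f((X^u_s,\nu_s)_{s\le t}) := X^u_t\langle\nu_t,I_d\rangle$ yields
\begin{equation*}
\mathbb{E}_{\nu_0}[R_t^2] = R_0\, e^{(a-\mu)t}\,\mathbb{E}_{\nu_0}[\widehat R_t],
\end{equation*}
so that $\mathrm{Var}_{\nu_0}[R_t]/(R_0^2 e^{2(a-\mu)t}) + 1 = \mathbb{E}_{\nu_0}[\widehat R_t]/(R_0 e^{(a-\mu)t})$, reducing the task to two-sided exponential control of $m_R(t) := \mathbb{E}_{\nu_0}[\widehat R_t]$.

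I would then introduce $m_Y(t):=\mathbb{E}[\widehat Y_t]$, split $\widehat R_t = \widehat Y_t + \sum_{u\neq E_t}\widehat X^u_t$, and apply Dynkin's formula to the spinal generator. Using that every fission conserves total mass, that only non-spine colonies die (at rate $\mu$), and reading off the continuous drifts from \eqref{eq: eds spine}--\eqref{eq: eds hors spine}, I obtain the closed linear equation
\begin{equation*}
m_R'(t) = \alpha\, m_R(t) + \beta\, m_Y(t), \qquad \alpha := a-\mu+\tfrac{\sigma^2\delta^2}{1+\delta^2},\quad \beta := \mu + \tfrac{\sigma^2}{1+\delta^2}.
\end{equation*}
For $m_Y$, the continuous drift is $(a+\sigma^2)\widehat Y_t$ from \eqref{eq: eds spine}; at a spine fission (rate $\lambda\widehat Y_t/\widehat R_t$), the size-biased selection rule \eqref{eq: next spine choice} combined with a fragmentation $\Theta$ produces an expected increment $-2\mathbb{E}[\Theta(1-\Theta)]\,\widehat Y_{t-}$, leading to
\begin{equation*}
m_Y'(t) = (a+\sigma^2)\, m_Y(t) - 2\lambda\,\mathbb{E}[\Theta(1-\Theta)]\,\mathbb{E}\!\left[\widehat Y_t^2/\widehat R_t\right].
\end{equation*}

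The main obstacle is that this second equation is \emph{not} closed, because of the nonlinear ratio $\widehat Y_t^2/\widehat R_t$. I would sidestep it with the pointwise sandwich $0 \le \widehat Y_t^2/\widehat R_t \le \widehat Y_t$ (valid since $\widehat Y_t \le \widehat R_t$) and Grönwall, obtaining
\begin{equation*}
\widehat Y_0\, e^{(a+\sigma^2-2\lambda\mathbb{E}[\Theta(1-\Theta)])t} \;\le\; m_Y(t) \;\le\; \widehat Y_0\, e^{(a+\sigma^2)t}.
\end{equation*}
Under the initial condition of unit-trait colonies, so that $R_0=N_0$ and \eqref{eq: init spine} forces $\widehat Y_0 = 1$ almost surely, I would insert each exponential bound into the linear ODE for $m_R$ and solve by variation of constants. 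The upper bound simplifies cleanly because $\alpha+\beta = a+\sigma^2$, which makes the integral telescope and produces the prefactors $1/R_0$ and $(R_0-1)/R_0$; the lower bound, with exponent $\gamma:=a+\sigma^2-2\lambda\mathbb{E}[\Theta(1-\Theta)]$, yields the stated prefactor $\beta/[R_0(\beta-2\lambda\mathbb{E}[\Theta(1-\Theta)])]$ via $\int_0^t e^{(\gamma-\alpha)s}\,ds$, completing the variance estimate.
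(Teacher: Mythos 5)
Your proposal is correct and follows essentially the same route as the paper: the Many-to-One formula with $f(x)=x$ for the mean and $f(x)=xR$ to reduce $\mathbb{E}[R_t^2]$ to $R_0e^{(a-\mu)t}\mathbb{E}[\widehat R_t]$, then the same coupled ODEs for $\mathbb{E}[\widehat R_t]$ and $\mathbb{E}[\widehat Y_t]$, the same sandwich $0\le \widehat Y_t^2/\widehat R_t\le \widehat Y_t$, and variation of constants. The only (welcome) difference is that you make explicit the normalization $\widehat Y_0=1$ needed for the stated prefactors $1/R_0$ and $(R_0-1)/R_0$, which the paper leaves implicit.
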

Notice that the lower bound is a continuous function of the parameters and the case $\mu + \sigma^2/(1 + \delta^2) = 2 \lambda \E[\Theta(1-\Theta)]$ is still well defined.

\begin{figure}[h!]
    \centering
    \includegraphics[width=0.97\linewidth]{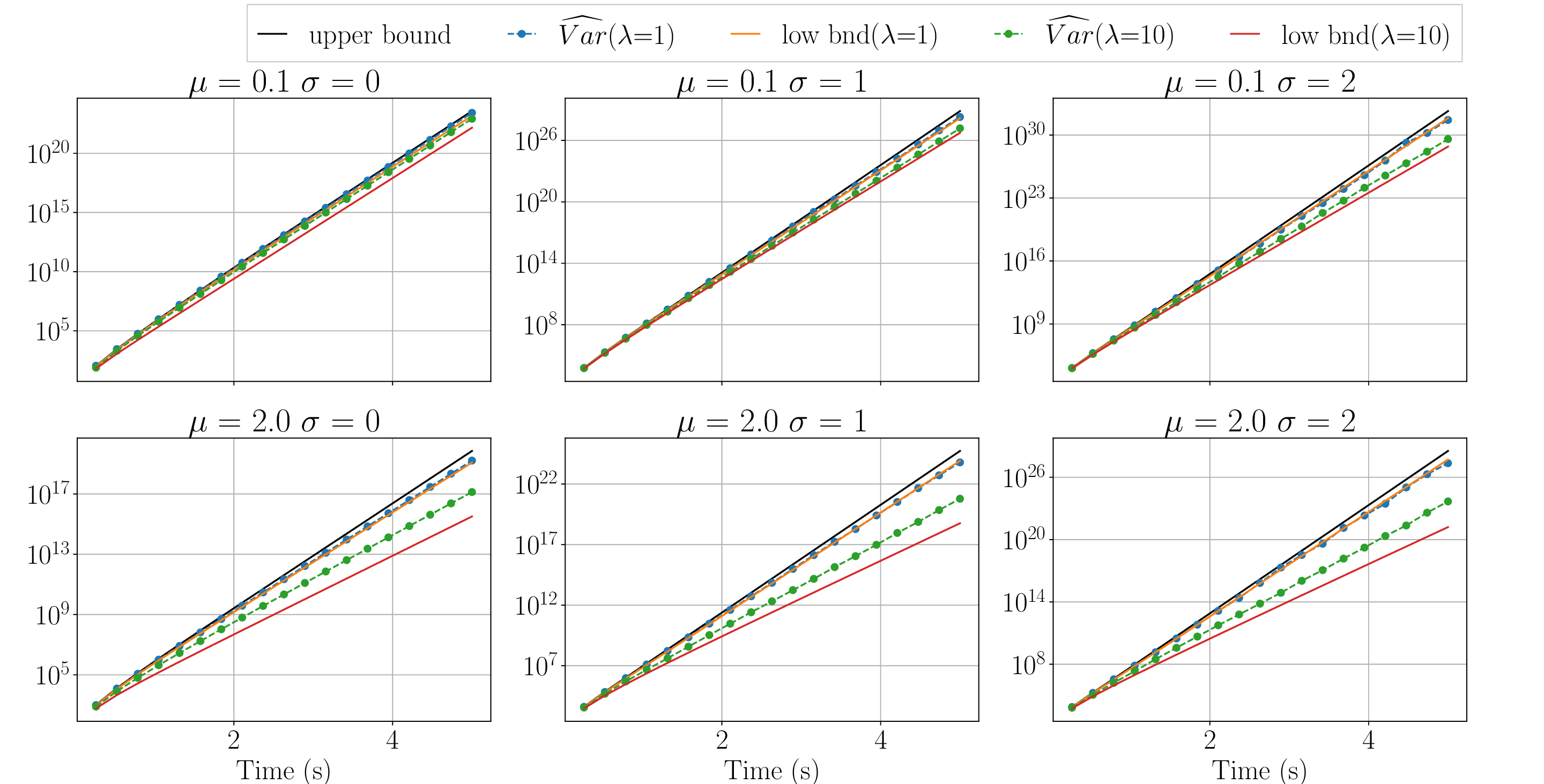}
    \caption{Monte-Carlo estimation of $\mathbb{V}ar[R_t]$ using $M = 10^6$ trajectories, for different values of $\lambda, \mu $ and $\sigma$. Other parameters are $R_0= 15, a= 5$ and $\delta = 1$.}
    \label{fig:var mieux}
\end{figure}

Equation \eqref{eq: basic qts} shows that the average resource in the population only depends on its Malthusian parameter $a$ and the individual death rate $\mu$. It is not affected by the strength of the noise.
Its variance, however, strongly depends on the strength of the noise (parameter $\sigma$), as well as the correlation between individual stochasticities (parameter $\delta$).
As we can see in Figure \ref{fig:var mieux}, the bounds found in Proposition \ref{prop:var} are a good approximation of the variance.
From the simulations (we observe the same trend for both lower and upper bounds), we see that the variance of $R$ increases with $\sigma$ and decreases with $\mu$ as we would expect. Notice that the lower bound in Proposition \ref{prop:var} decreases with $\lambda$ while the upper bound does not depend on the fission rate. The simulations in Figure \ref{fig:var mieux} show that for small division rates (the blue dots with $\lambda = 1$), both boundaries are close, while when $\lambda$ is higher (the green dots with $\lambda = 10$), these bounds are less optimal and the variance decreases. \\

Recall that the resources do not impact the branching mechanisms at the population level, while it does at an individual level. Hence the quantity of interest for studying the behavior of the colonies in the population is not their traits but the fraction of resources $x/R$ that they own. This is the content of the next result, which focuses on the distribution of these resources among the various colonies in the population.

\begin{proposition}\label{prop: sharing resources}
The average resource owned by a colony picked uniformly at random at a time $t$ satisfies, as $t$ goes to infinity
$$
\E_{\nu_0}\left[\mathbbm{1}_{\left\{\mathbb{G}(t) \neq \emptyset\right\}}X^{U_t}_t\right] \sim  R_0e^{(a-\mu)t} \frac{\mu}{\lambda}\left(1 - e^{-\lambda/\mu}\right).
$$
Depending on the noise intensity and correlation, the proportion of resources owned by the most successful colonies may vary. Let $\mu$, $\lambda$ and $q$ (the distribution of $\Theta$) be fixed. Then: 
\begin{itemize}
    \item If $\sigma^2/(1+\delta^2) \to \infty$, for all $\eps>0$
    \begin{equation}\label{tout_ds1}
        \mathbb{E}_{\nu_0}\Bigg[ \sum_{u \in \mathbb{G}(t)} X^u_t \mathbbm{1}_{\left\{\frac{X^u_t}{R_t} \geq 1-\eps \right\}} \Bigg]\sim \E_{\nu_0}[R_t], \quad (t \to \infty).
    \end{equation}
     \item If  $\sigma^2/(1+\delta^2) \to 0$, for all $1/2<\gamma <1$,
      \begin{equation}\label{pastout_ds1bis}  \limsup_{t \to \infty}\frac{\mathbb{E}_{\nu_0}\Big[ \sum_{u \in \mathbb{G}(t)}X^u_t \mathbbm{1}_{\left\{\frac{X^u_t}{R_t} \geq \gamma \right\}} \Big]}{\E_{\nu_0}[R_t]} \leq \frac{\sqrt{1+8 \lambda \E[\Theta(1-\Theta)]/\mu}-1}{4 \gamma \lambda \E[\Theta(1-\Theta)]/\mu} \underset{\lambda \E[\Theta(1-\Theta)]/\mu \to \infty}{\to} 0 .  \end{equation}
\end{itemize}
We can also obtain a lower bound on the maximal quantity of resources owned by a colony
\begin{equation}\label{tout_ds1bis}
     \liminf_{t \to \infty}\frac{\mathbb{E}_{\nu_0}\left[ \sup_{u \in \mathbb{G}(t)} X^u_t \right]}{\E_{\nu_0}[R_t]} \geq   \left(\frac{\beta+ \beta\frac{\mu}{\lambda} \left(1- e^{-\lambda/\mu}\right)+2}{2\beta- \frac{4\lambda}{\mu}\E[\Theta \ln\Theta]+ 2}\right) \vee  \frac{\mu}{\lambda}\left(1- e^{-\lambda/\mu}\right),
\end{equation}
where
\begin{equation}\label{def: alpha beta}
    \alpha:= {\lambda \E[\Theta(1-\Theta)]}/{\mu} \quad \text{and}\quad \beta := {\sigma^2}/({\mu\left(1+\delta^2\right))}. 
\end{equation}
\end{proposition}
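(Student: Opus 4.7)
The plan is to apply Corollary~\ref{cor: mto colony} systematically, turning each claim about the biological population into a one-particle expectation on the spine, then analysing the latter via It\^o's formula on the SDEs~\eqref{eq: eds hors spine}--\eqref{eq: eds spine} combined with the explicit jump rates. For the first asymptotic I write $X^{U_t}_t=N_t^{-1}\sum_{u\in\mathbb{G}(t)} X^u_t$ and take $f((X^u_s,\nu_s),s\leq t)=X^u_t/N_t$ in Corollary~\ref{cor: mto colony}; this gives $\E_{\nu_0}[\mathbbm{1}_{\{\mathbb{G}(t)\neq\emptyset\}} X^{U_t}_t]=R_0 e^{(a-\mu)t}\E_{\nu_0}[1/\widehat{N}_t]$. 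In the spinal population, non-spine colonies die at rate $\mu$, the spine itself never dies, and the total fission rate is $\lambda$, so $\widehat{N}_t-1$ is an $M/M/\infty$ queue of arrival rate $\lambda$ and service rate $\mu$, converging in law to a Poisson variable $K$ of mean $\lambda/\mu$; the identity $\E[1/(1+K)]=(\mu/\lambda)(1-e^{-\lambda/\mu})$ then closes this item.

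\textbf{Regimes of resource sharing.} Taking $f((X^u_s,\nu_s),s\leq t)=X^u_t\mathbbm{1}_{\{X^u_t/R_t\geq \kappa\}}$ in Corollary~\ref{cor: mto colony} and dividing by $\E[R_t]=R_0 e^{(a-\mu)t}$ turns the next two bullets into estimates of $\P(p_t\geq \kappa)$ where $p_t:=\widehat{Y}_t/\widehat{R}_t$. In the high-noise regime I subtract~\eqref{eq: eds spine} from~\eqref{eq: eds hors spine}: the environmental noise cancels and $\ln(\widehat{Y}_t/\widehat{X}^u_t)$ has positive drift $\sigma^2/(1+\delta^2)$ with an independent Brownian noise of finite variance per unit time, while the only jumps of this ratio (the spine fissions) arrive at rate at most $\lambda$ and carry bounded mean decrements $2\E[\Theta\ln\Theta]$. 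Hence as $\sigma^2/(1+\delta^2)\to\infty$ the drift dominates, and up to a union bound over the spinal population size, $p_t\to 1$ in probability. In the low-noise regime I apply Markov $\P(p_t\geq\gamma)\leq \E[p_t]/\gamma$ and derive an ODE for $\E[p_t]$ by It\^o: the continuous drift of $p_t$ is $\sigma_y^2\, p_t(1-2p_t+\rho_t)$ with $\sigma_y^2:=\sigma^2/(1+\delta^2)$ and $\rho_t:=\sum_u(\widehat{X}^u_t/\widehat{R}_t)^2$; the spine fissions produce $-2\lambda\E[\Theta(1-\Theta)]p_t^2$; the non-spine deaths produce $\mu p_t\sum_{u\neq E_t}\widehat{X}^u_t/(\widehat{R}_t-\widehat{X}^u_t)$, which is bounded by $\mu(1-p_t)$ thanks to $\widehat{X}^u_t\leq\widehat{R}_t-\widehat{Y}_t$ for $u\neq E_t$. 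Letting $\sigma_y^2\to 0$ and using Jensen $\E[p_t^2]\geq(\E[p_t])^2$ yields $\tfrac{d}{dt}\E[p_t]\leq -2\alpha\mu(\E[p_t])^2+\mu(1-\E[p_t])$ with $\alpha:=\lambda\E[\Theta(1-\Theta)]/\mu$; the unique stable root $V_*=(\sqrt{1+8\alpha}-1)/(4\alpha)$ upper-bounds $\limsup_t\E[p_t]$, and dividing by $\gamma$ gives the stated inequality.

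\textbf{Maximum colony size.} The $(\mu/\lambda)(1-e^{-\lambda/\mu})$ branch of the $\vee$ is immediate from $\sup_{u\in\mathbb{G}(t)}X^u_t\geq X^{U_t}_t$ combined with the first bullet. For the other branch I start from the deterministic inequality $\sup_u X^u_t\geq \sum_u(X^u_t)^2/R_t$ and apply Many-to-One with $f((X^u_s,\nu_s),s\leq t)=X^u_t\cdot X^u_t/R_t$ to obtain $\E_{\nu_0}[\sup_u X^u_t]/\E_{\nu_0}[R_t]\geq \E_{\nu_0}[p_t]$, reducing the claim to a lower bound on $\E[p_t]$. For this I apply It\^o to $\ln Z_t$ with $Z_t:=1/p_t$: between jumps each non-spine ratio $\widehat{X}^u_t/\widehat{Y}_t$ is a local martingale, so the continuous drift of $\ln Z_t$ is $-\sigma_y^2(1-2p_t+\rho_t)/2$; the spine-selection rule~\eqref{eq: next spine choice} produces an averaged jump $-2\E[\Theta\ln\Theta]$ at rate $\lambda p_t$; each non-spine death adds $\ln(1-\widehat{X}^v_t/\widehat{R}_t)$ at rate $\mu$. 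Imposing stationarity, bounding $\rho_t\geq p_t^2$ and $\ln(1-x)\leq -x$ on $[0,1)$, and combining with a moment estimate relating the residual second-moment term to the Poisson limit $\E[1/\widehat{N}_\infty]=(\mu/\lambda)(1-e^{-\lambda/\mu})$, one obtains a linear inequality in $\E[p_t]$ whose solution is precisely the first branch of the stated bound.

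\textbf{Main obstacle.} The hardest step will be matching the exact rational expression in that last branch. The naive Jensen bound $\E[p_t]\geq 1/\E[Z_t]=\mu/(\mu+\lambda)$, which follows at once from the elementary $\E[Z_t]$-ODE $\tfrac{d}{dt}\E[Z_t]=\lambda+\mu-\mu\E[Z_t]$, is far too weak; recovering the numerator with its $\E[1/\widehat{N}_\infty]$ factor forces the $\ln Z_t$ computation to be coupled with a careful quantitative comparison between the spine's relative weight $p_t$ and the Poisson-type spinal population size $\widehat{N}_t$, and I expect this coupling step to be the technical crux of the proof.
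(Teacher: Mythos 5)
Your overall architecture coincides with the paper's: every item is reduced via the Many-to-One formula to a statement about the spinal fraction $\widehat Z_t=\widehat Y_t/\widehat R_t$ (your $p_t$), and the first asymptotic, the low-noise bound (Markov plus the ODE $\tfrac{d}{dt}\E[\widehat Z_t]\leq \mu(1-\E[\widehat Z_t])-2\alpha\mu\E[\widehat Z_t]^2$ obtained from the same two inequalities $\sum_{u\neq E_t}\widehat Z^u/(1-\widehat Z^u)\leq(1-\widehat Z)/\widehat Z$ and Jensen), and the reduction of the max-colony bound to a lower bound on $\E[\widehat Z_\infty]$ via $\sup_u X^u_t\geq\sum_u(X^u_t)^2/R_t$ are all essentially the paper's arguments. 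Two caveats. First, you silently use that $\E[\widehat Z_t]$ (and time averages of functionals of the whole vector of fractions) converge as $t\to\infty$; the paper grounds this in Lemma \ref{prop_autonomous}, which shows that $(\widehat N_t,\widehat Z_t,(\widehat Z^u_t)_u)$ is an autonomous \emph{regenerative} process (regeneration at the times when $\widehat N_t=1$), and in point (1) of Lemma \ref{prop_lim_frac}. Second, for the first branch of \eqref{tout_ds1bis} your shortcut of ``imposing stationarity'' on $\ln Z_t=-\ln\widehat Z_t$ does produce exactly the stated rational expression (I checked the algebra), but it presupposes that $\E[\ln \widehat Z_\infty]$ is finite and that the stationary balance equation applies to this unbounded-below functional; the paper avoids this by integrating the differential inequality for $\E[\ln\widehat Z_t]$, applying Jensen $\E[\widehat Z_t]\geq e^{\E[\ln\widehat Z_t]}$, and running a separation-of-variables (Gronwall-type) argument on $\int_{t_0}^t\E[\widehat Z_s]\,\text{d}s$. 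That integrability point must be addressed.

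The genuine gap is the high-noise bullet \eqref{tout_ds1}. Your pathwise argument on $\ln(\widehat Y_t/\widehat X^u_t)$ is not sound as sketched: (i) it is false that ``the only jumps of this ratio are the spine fissions'' --- the ratio also jumps when colony $u$ fissions, and, more importantly, at every spine fission a \emph{new} non-spine colony is created whose log-ratio to the spine starts at $\ln(\Theta/(1-\Theta))$, i.e.\ of order one (or negative), so the drift has had no time to act on it; (ii) the residual Brownian noise has variance $2\sigma^2/(1+\delta^2)$ per unit time, which grows with the very parameter you send to infinity (the drift still wins, but only after a quantitative comparison over each colony's age); (iii) the union bound must therefore be coupled with control of the ages and genealogy of all extant non-spine lineages, which is a substantial piece of work you do not supply. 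The paper circumvents all of this (Lemma \ref{prop_correl}) by writing the stationary balance equation for the bounded functional $\widehat Z$: setting the expected drift in \eqref{eq: eds z} to zero at stationarity yields $\lim_t\E\big[\widehat Z_t\sum_{u\neq E_t}(\widehat Z^u_t)^2\big]\leq 2\lambda\E[\Theta(1-\Theta)](1+\delta^2)/\sigma^2$, and then $\sum_{u\neq E_t}(\widehat Z^u_t)^2\geq(1-\widehat Z_t)^2/\widehat N_t$ together with Cauchy--Schwarz and the Poisson moments of $\widehat N_\infty$ forces $\widehat Z_\infty\to 1$ in probability as $\sigma^2/(1+\delta^2)\to\infty$. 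You should either adopt that balance-equation argument or supply the missing age-dependent control in your pathwise one.
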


The distribution of resources among colonies is thus strongly affected by the intensity of the fluctuations and their correlation (i.e. the $\sigma$ and $\delta$ parameters respectively).
Equation \eqref{tout_ds1} proves that if the total noise intensity $\sigma$ is large and the fluctuations are not too correlated ($\delta$ of a smaller order than $\sigma$), most of the resources are held by one colony. Equation \eqref{pastout_ds1bis} shows that whatever the noise intensity, if fluctuations are correlated, resources are more evenly shared among several colonies. Indeed, the upper bound is strictly smaller than $1$ if $\gamma$ is close enough to $1$. 
Equation \eqref{pastout_ds1bis} also shows that if the variance $\sigma$ of the noise in the individual traits is too low, the impact of $\delta$ is not significant. This is natural as $\delta$ represents the relative intensity of environmental noise compared to intrinsic noise. Consequently, if the total variance of the noise in trait dynamics is too small, two populations with different environmental sensitivities would still exhibit similar behaviors.
 In addition, when the fission rate is large, the impact of the environmental noise decreases as the population size is large. However, at a fixed fission rate, an increase in the death rate also decreases the effect of environmental noise. This is due to individuals dying too fast such as noise does not have a significant impact on their trajectories.
This result illustrates the importance of distinguishing between intrinsic and environmental stochasticities to predict the long time behavior of a population. Figure \ref{fig:distribution} shows the sharing of resources between individuals, estimated with Monte-Carlo methods, at different times. The stationary distribution is reached in a short time scale independently of the parameters values. The transition between a unique 'winning' colony and a more homogeneous distribution of resources among colonies is continuous with the value of $\sigma^2/(1+\delta^2)$.     
However, when the individual traits are noise-sensitive, the distribution of resources colonies in two populations with different environmental noise levels is distinct. 
\begin{figure}[h!]
    \centering
    \includegraphics[width=\linewidth]{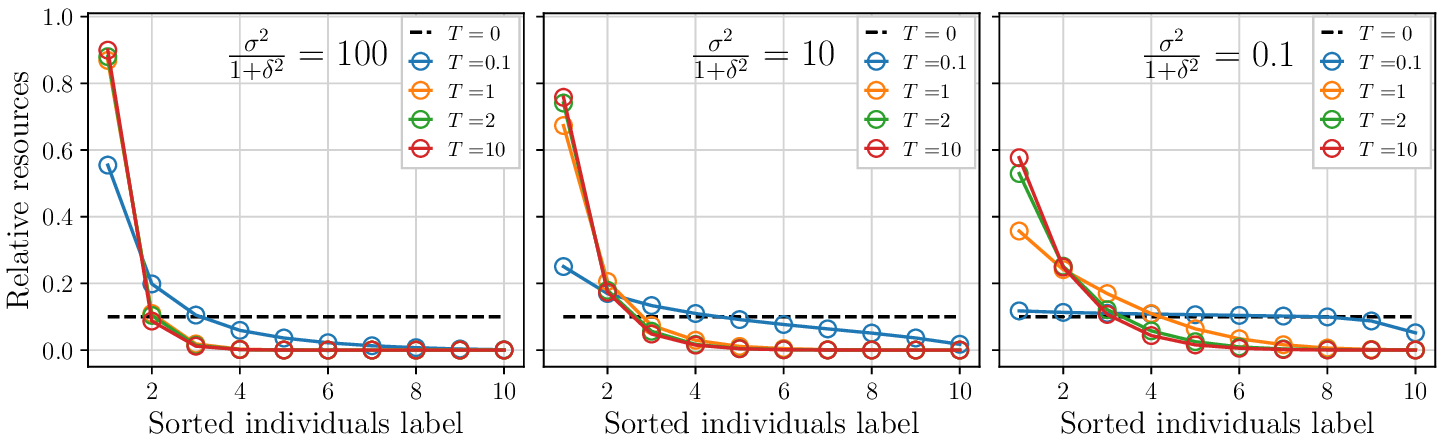}
    \caption{Comparison of the mean sharing of resources at different times $T$ and different values of $\sigma^2/(1+\delta^2)$. The x axis gives the label of the individuals, sorted by trait in a decreasing order. The other parameters are $\lambda = 3, \mu = 1, a = 1, N_0 = 10$ and $R_0 = 10$.}
    \label{fig:distribution}
\end{figure}

\subsection{Spinal algorithm} \label{section_simu}
As illustrated in Section \ref{section: long-time}, the spine process is a powerful tool to derive analytical results on key features of the population, that would otherwise be intractable. It can also be of great interest to consider spine processes when it comes to simulations of the population. As we have emphasized, a trajectory of the spine process does not describe a potential behavior of the colonial population, but Theorem \ref{thm: sampling colony} gives the right correction to apply to estimate expectations from spinal trajectories, using Monte-Carlo methods. This alternative simulation method classifies as an importance sampling method, see \textit{e.g.} \cite{smith1997quick}, as we use samples generated from a different distribution (the spinal one) to evaluate properties on the distribution of interest. The spinal Algorithm \ref{algo: schematic} gives the general method to construct a trajectory of the spine process, and the implemented methods in Python can be found in \cite{github2}. It is similar to the classical acceptance-rejection method, see \textit{e.g.} \cite{champ07} that can be used to draw trajectories of the colonial process, with some extra steps to handle the particular behavior of the spinal colony.
\begin{algorithm}
\caption{\textbf{Spinal trajectory simulation method on $[0,T]$}}\label{algo: schematic}
\begin{algorithmic}
\State Initialize time $t=0$ and choose initial spine $E_0$, according to \eqref{eq: init spine}.
\State \textbf{While} $t < T$:
	\State $\qquad \boldsymbol{1} \longrightarrow$ Propose the next branching time $t_{\text{next}}$. 
	\State $\qquad \boldsymbol{2}\longrightarrow$ Draw the colonies trajectories on $[t,t_{\text{next}}\wedge T)$ from \eqref{eq: eds hors spine} and \eqref{eq: eds spine}.
	\State $\qquad \boldsymbol{3}\longrightarrow$ Decide the type of branching event with rejection method.
	\State $\qquad$\textbf{If} Spinal branching event:
    \State $\qquad \qquad$ Choose new spine according to \eqref{eq: next spine choice}.
    \State $\qquad \boldsymbol{4}\longrightarrow$ Update current time $t \leftarrow t_{\text{next}} \wedge T$
\end{algorithmic}
\end{algorithm}

According to Algorithm \ref{algo: schematic}, generating a  trajectory of the spine process takes a short additional amount of time compared to the direct method, using trajectories of the colonial process. However, it can be faster to use this method in two cases: when the variance of the estimated quantity is smaller in the spine process, and when the computations are easier in the spine process. The first point is illustrated in Figure \ref{fig: compare times}A, where the goal is to estimate the variance of the total resources $R_t$ in the colonial process by using trajectories of $R_t^2$. In this case, the variance in the Monte-Carlo method is of the order $\E[R_t]^4$.
However, from Corollary \ref{cor: mto colony} with $f(x) = x^2$, we see that this value can be estimated from trajectories of $\widehat{R}_t$, thus the variance in the Monte-Carlo method with spinal trajectories is of the order $\E[R_t]^2$.
As a result, the evaluated quantity  converges faster using spinal than colonial trajectories, and a smaller number of iterations $M$ is required from the spine process to reach the same precision. Overall, the spinal method converges in this case approximately 10 times faster than the direct method, despite the fact that the generation of one trajectory of the spine process takes a longer time.
Another interesting point in this particular case is the fact that the trajectories of $R_t^2$ generated from the colonial process can reach really high values when the intrinsic noise $\sigma$ increases. In some trajectories, the highest possible value that can be stored in simple precision (approximately $10^{38}$, according to the norm IEEE 754) is exceeded, resulting in an underestimation of the real value, as we can see in Figure \ref{fig: oversized}. A simple modification consists in using double precision floating numbers, that can reach up to $10^{308}$, but this will drastically increase both the computation time and the memory usage of the algorithm. The spinal method, on the other hand, uses trajectories of $\widehat{R}_t$, that have a significantly lower probability to reach such high values.
\begin{figure}[h!]
    \centering
    \includegraphics[width=0.97\linewidth]{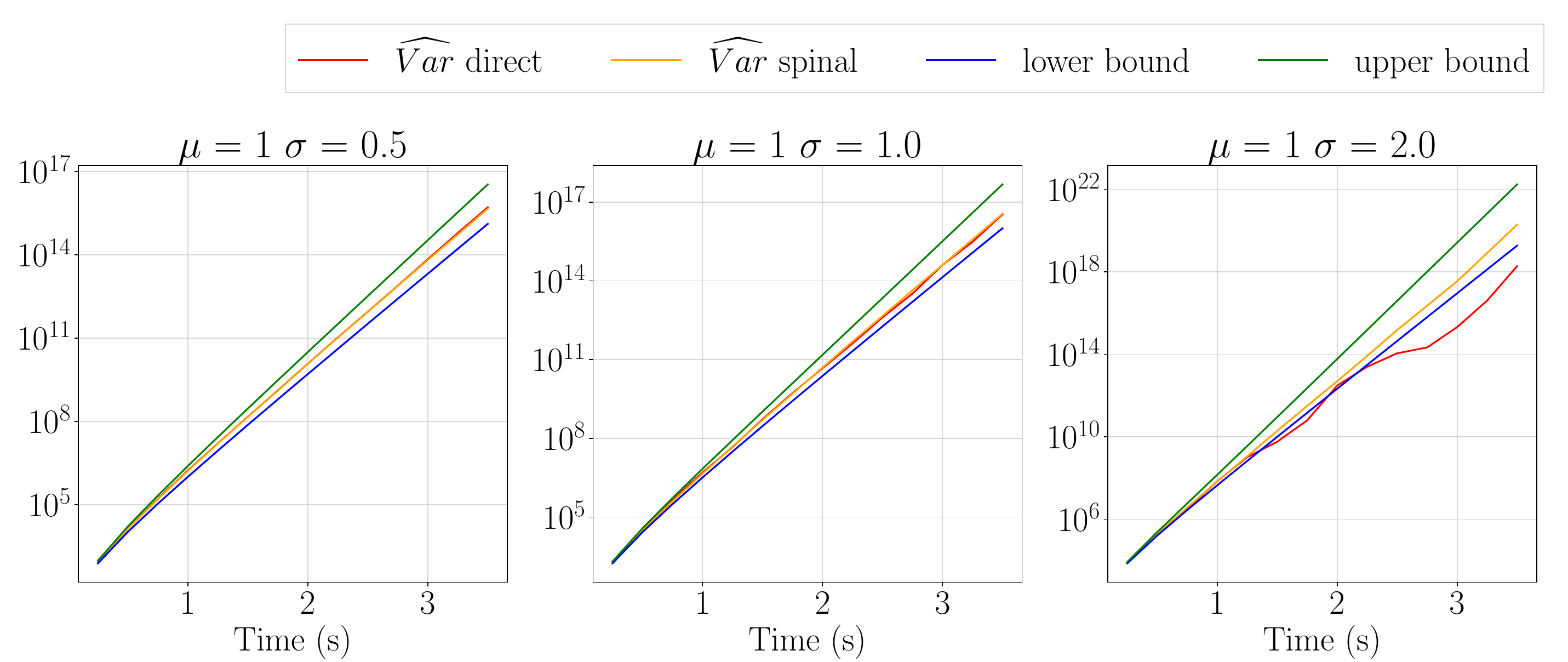}
    \caption{Monte-Carlo estimations of $\mathbb{V}ar[R_t]$ with $10^6$ trajectories from the direct and spinal methods. Parameter values are $a = 5$, $\lambda = 6$, $\delta = 1$, $R_0 = 30$. }
    \label{fig: oversized}
\end{figure}

The second general case for which the spine method can be significantly faster than the direct one is when the estimated quantity needs heavy computations to generate trajectories of the colonial process, while the required trajectories from the spine process are much easier to draw. In Fig.\ref{fig: compare times}B, we estimated the mean size of a colony picked uniformly at random in the colonial population $\mathbb{E}[\mathbbm{1}_{\G(t)\neq 0}X^{U_t}]$. Direct sampling trajectories need to store the vector of traits of all colonies and pick uniformly one colony in this array, while the spine method only necessitates to sample trajectories of $1/\widehat{N}_t$ (Th.\ref{thm: sampling colony} with F(x) = x). Thus the spine trajectories are drawn from a simple birth and death process with an immortal colony, that is, with the parameters chosen in Fig.\ref{fig: compare times}B , approximately $10$ times faster to compute. Furthermore, in this example, direct trajectories where the population did not survive are thrown away, and in those who survived, a colony is uniformly picked in the population. These two mechanisms largely increase the value of the variance between trajectories, resulting in an effective convergence approximately $10^3$ times slower compared to spinal method. Employing the spinal method in this context thus yields a significant time efficiency improvement, with a gain on the order of $10^4$.

\begin{figure}[h!]
    \centering
    \includegraphics[width=\linewidth]{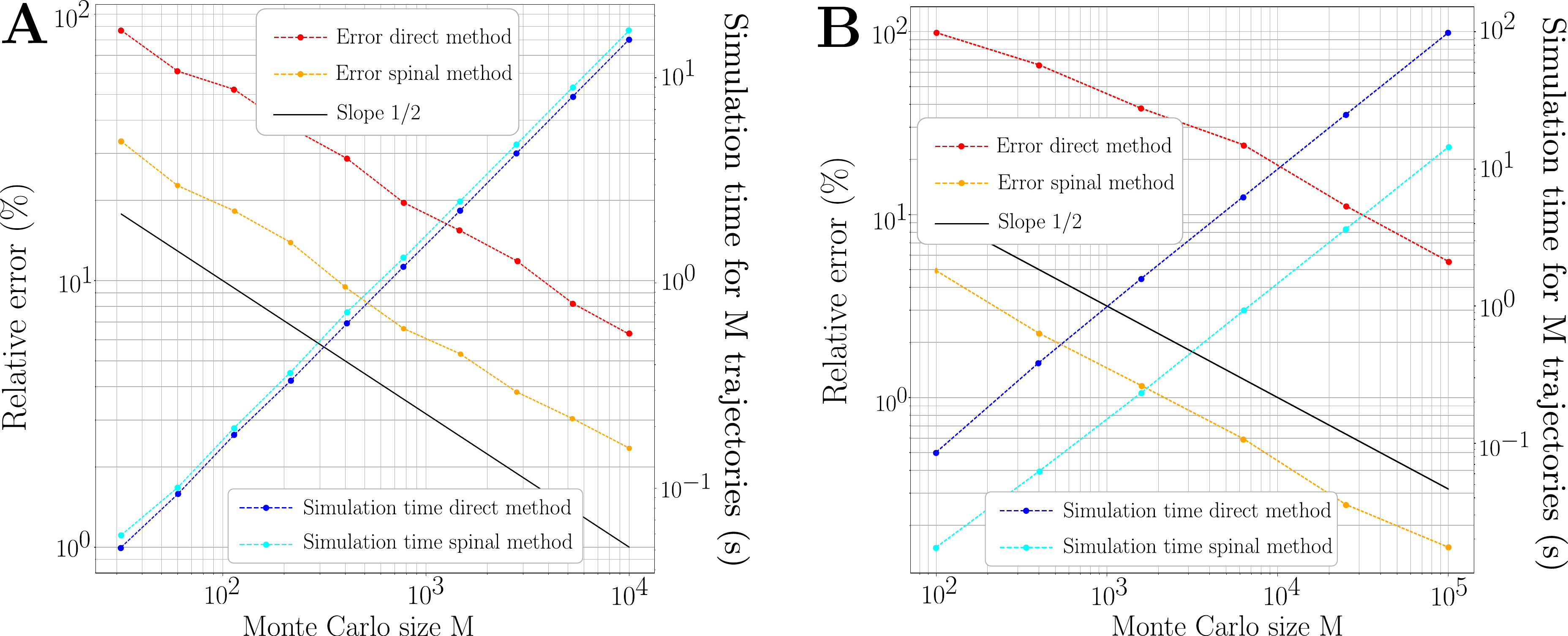}
    \caption{Comparison of simulation times and relative error for Monte Carlo estimates from direct and spinal methods. (A) $\mathbb{V}ar[R_t]$ estimation. Parameters are $a = 1$, $\lambda = 1$, $\mu = 0.3$, $\sigma = 1$, $\delta = 0.5$, $T = 2$. (B) Estimation of $\mathbb{E}[\mathbbm{1}_{\G(t)\neq 0}X^{U_t}]$. Parameters are $a = 1$, $\lambda = 2$, $\mu = 0.5$, $\sigma = 0.5$, $\delta = 0.5$, $T = 10$.}
    \label{fig: compare times}
\end{figure}

\section{General population model and spinal construction}\label{section: general spine}
In the following we will denote by $\mathfrak{B}\left(A,B\right)$ (resp. $\mathcal{C}^i\left(A,B\right)$) the set of measurable (resp. $i$ times continuously differentiable) $B$-valued functions defined on a set $A$. We denote by $\mathcal{U}$ the Ulam-Harris-Neveu set of labels of the individuals, and by $\mathcal{X} \subset \mathbb{R}^d$ the set of traits of the individuals in the population, for $d \geq 1$. In this section, the framework is not restricted to colonies modeling and thus we will use the term individual in all generality.

Let us introduce $\Zbarset$ the set composed of all finite point measures on $\mathcal{U}\times\mathcal{X}$
\begin{equation*}
\Zbarset := \Big\{\sum_{i=1}^{N} \delta_{(u^i,x^i)},\  N \in \mathbb{N}, \ \left(u^i, 1 \leq i \leq N\right) \in  \mathcal{U}^{N}, \  \left(x^i, 1 \leq i \leq N\right) \in  \mathcal{X}^{N}   \Big\}.
\end{equation*}
We also define the set of marginal population measures, that is
\begin{equation*}
\Zset := \Big\{\sum_{i=1}^N \delta_{x^i},\ N \in \mathbb{N}, \ \left(x^i, 1 \leq i \leq N\right) \in  \mathcal{X}^{N} \Big\}.
\end{equation*}
For any measure $\bar{\nu} = \sum_{i=1}^{N} \delta_{(u^i,x^i)}$ in $\Zbarset$, we will write $\nu := \sum_{i=1}^{N} \delta_{x^i}$ its projection on $\Zset$. By convention, if the number of points in the measure is $N=0$, $\bar{\nu}$ and $\nu$ are the trivial zero measures on $\mathcal{U}\times\mathcal{X}$ and $\mathcal{X}$.
We introduce for every $\bar{\nu} \in \Zbarset$, every $g \in \mathfrak{B}\left(\mathcal{U}\times \mathcal{X},\mathbb{R}\right)$ and every $f\in \mathfrak{B}\left(\mathcal{X},\mathbb{R}\right)$  
\begin{equation*}
\langle \bar{\nu}, g \rangle := \int_{\mathcal{U} \times \mathcal{X}}g(u,x)\bar{\nu}(\textrm{d}u,\textrm{d}x), \ \textrm{ and } \ \langle \nu, f \rangle := \int_{\mathcal{X}}f(x)\nu(\textrm{d}x).
\end{equation*}
Finally, we denote by $\mathbb{D}\left(A,B\right)$ the Skorohod space of $B$-valued càdlàg functions on a subset $A$ of $\mathbb{R}_+$. 
For every process $\left(\Z_t, t \in A \right) \in \mathbb{D}\left(A,B\right)$ and $z \in B$, we will denote 
$ \mathbb{E}_z\left[f\left(\Z_t\right) \right] := \mathbb{E}\left[ f\left(\Z_t\right) \vert \Z_0=z\right].$
\subsection{Definition of the population}  \label{section: definition}
The population is described at any time $t \in \mathbb{R}_+$ by the finite point measure $\Zbar_t \in \Zbarset$ giving the label and the trait of every individual living in the population at this time. We introduce 
\begin{equation*}
\mathbb{G}\left( t\right) := \left\{u \in \mathcal{U}: \int_{\mathcal{U}\times\mathcal{X}}\mathbbm{1}_{\{v=u\}}\Zbar_t\left(\textrm{d}v,\textrm{d}x\right) \geq 1 \right\}
\end{equation*} 
the set of labels of living individuals at time $t$. For every individual labeled by $u \in \mathbb{G}\left( t\right)$ we denote by $X^u_t$ its trait, and with a slight abuse of notation, $X^u_s$ will denote the trait of its unique ancestor living at time $s \in [0,t]$.
Using these notations, we can write 
\begin{equation*}
\Zbar_t = \sum_{u \in \mathbb{G}\left( t\right)}\delta_{\left(u,X^u_t\right)} \quad \textrm{and} \quad \Z_t := \sum_{u \in \mathbb{G}\left( t\right)}\delta_{X^u_t}.
\end{equation*}
The initial population is given by a measure $\bar{z} = \sum_{i=1}^{N}\delta_{\left(u^i,x^i\right)}$, and 
the branching rate by a function $B\in\mathfrak{B}(\mathcal{X} \times \Zset,\mathbb{R}_+)$. An individual with trait $x$ in a population $\Z$ dies at an instantaneous rate $B\left(x,\Z\right)$. 
It gives birth to $n$ offspring, where $n$ is randomly chosen with distribution $\left(p_{k}\left(x,\Z\right), k \in \mathbb{N}\right)$ of first moment $m\left(x,\Z\right)$. Hence branching events leading to $n$ children happen at rate $B_n(\cdot) := B(\cdot)p_n(\cdot)$. If $n\geq 0$, we denote by  $\boldsymbol{y}\in\mathcal{X}^n$ the offspring traits at birth, randomly chosen according to the law $K_n(x,\Z,\cdot)$. For all $1 \leq i \leq n$, the $i$-th child of a parent of label $u$ is labeled $ui$ and its trait is $y^i$, the $i$-th coordinate of $\boldsymbol{y}$. 
We denote by $T_{\textrm{Exp}} \in\overline{\mathbb{R}}_+$ the explosion time of the process $\left(\Zbar_t, t\geq 0 \right)$, defined as the limit of its jumps times $(T_k, k\geq 0)$.  

Recall that $N_t$ denotes the number of individuals in the population at time $t\geq 0$. It gives the dimension of the vectorial objects. 
For all $n \geq 1$, we introduce a number $q(n) \geq n$ that denotes the number of sources of noise in the trait dynamics. For all $q \geq n \geq 1$, we introduce $\boldsymbol{a}^{(n)} \in\mathfrak{B}(\Zbarset ,\ \mathcal{X}^n)$ and  $\ssigma^{(n,q)} \in \mathfrak{B}(\Zbarset ,\ M_{n,q}(\mathcal{X}))$ the drift and diffusion coefficients, where $ M_{n,q}(\mathcal{X})$ denotes the set of all $n \times q$ matrices with coefficients in $\mathcal{X}$. 
We also denote by $\boldsymbol{X}_t := (X^u_t, u \in\mathbb{G}(t))$ the vector of $\mathcal{X}^{N_t}$ containing the traits of every living individuals in the population.
Between two successive branching events, the traits of the individuals alive in the population evolve according to population-dependent dynamics
\begin{equation}\label{eq: eds trait}
\text{d}\boldsymbol{X}_t = \boldsymbol{a}^{\left(N_t\right)}\left(\Zbar_t\right)\text{d}t + \ssigma^{\left(N_t, q\left(N_t\right)\right)}\left(\Zbar_t\right) \text{d}\boldsymbol{B}_t
\end{equation}
where $\mathbf{B}_t$ is a $q\left(N_t\right)$-dimensional Wiener process. The case $q(n) > n$ corresponds to an external noise added to the trait dynamics of each individual, that can be for example, the impact of a common environment that correlates the individuals. Such a process can be seen as a generalization of a  multi-type continuous-state branching process in random environment \cite{barczy14, bansaye21}.
The population model introduced in Section \ref{section: example} and studied in this paper is an example of a process satisfying \eqref{eq: eds trait}.

The formalism used in \eqref{eq: eds trait} also allows to consider models where the individual dynamics differ between individuals, and change with the population size. Notice that this formalism  will be useful to derive further expressions of operators and generators of the branching process.

In Section \ref{section: Many-to-One formula}, we establish a $\psi$-spinal construction that distinguishes an individual in the population, and derive a Many-to-One formula in the context of branching processes with complex interactions. To this end we will need additional notations.
The spinal construction generates a $\Zbarset$-valued process along with the label of a distinguished individual that can change with time. For convenience, we will denote by $\Zbarhatset$ and $\Zhatset$ the sets such that
\begin{equation*}\label{Z hat space}
\Zbarhatset := \left\{\left(\mathfrak{e},\Zbar\right) \in \mathcal{U} \times \Zbarset: \ \langle \Zbar, \mathbbm{1}_{\{ \mathfrak{e}\}\times\mathcal{X}} \rangle\geq 1  \right\}, \quad \textrm{and } \quad  
\Zhatset := \left\{\left(x,\Z\right) \in \mathcal{X} \times \Zset: \ \langle \Z, \mathbbm{1}_{\{ x\}} \rangle \geq 1  \right\}.
\end{equation*}
For every $\left(\mathfrak{e},\Zbar\right) \in \Zbarhatset$, we uniquely define $x_{\mathfrak{e}}$ the trait of the individual of label $\mathfrak{e}$ in the population $\Zbar$.
Thus, the spine process is a $\Zbarhatset$-valued branching process and its marginal is a $\Zhatset$-valued branching process. We will propose a generalized spinal construction, where branching rates are biased with weight functions chosen in a set $\mathcal{D}$, defined by
\begin{equation}\label{def: D space}
\mathcal{D} := \left\{F_f \in \mathfrak{B}\left(\Zbarhatset,\mathbb{R}_+^*\right)\ \textit{s.t.} \  \left(f,F\right) \in \mathcal{C}^2\left(\mathcal{X},\mathbb{R}\right)\times \mathcal{C}^2\left(\mathcal{X}\times\mathbb{R},\mathbb{R}_+^*\right) \right\},
\end{equation}
where for every $\left(\mathfrak{e},\Zbar\right) \in \Zbarhatset$,  $F_f\left(\mathfrak{e},\Zbar\right) := F(x_\mathfrak{e},\langle\Z,f\rangle)$. In order to alleviate the notations when there is no ambiguity, we will omit the subscript $f$. 
In the following we will also omit the superscript indicating the dimension of the drift and diffusion functions, and refer to the dynamics of the individual $u$ with a subscript $u$ in those functions.
\begin{proposition}\label{prop: generator G}
The infinitesimal generator $G$ of the trait process solution of \eqref{eq: eds trait}, defined on the set $\mathcal{D}$ is given, for all functions $F_f \in \mathcal{D}$ and all $\left(\mathfrak{e},\Zbar\right) \in \Zbarhatset$ by:
\begin{align}\label{eq: generator}
GF_f\left(\mathfrak{e},\Zbar\right) &= \partial_1F\left(x_{\mathfrak{e}},\langle \nu,f \rangle \right)\boldsymbol{a}_{\mathfrak{e}}\left(\Zbar\right)+\frac{1}{2}\partial^2_{1,1}F\left(x_{\mathfrak{e}},\langle \nu,f \rangle \right)\left(\ssigma\ssigma^T\right)_{\mathfrak{e},\mathfrak{e}}\left(\Zbar\right) \nonumber\\
&+ \partial_{1,2}^2F\left(x_{\mathfrak{e}},\langle \nu,f \rangle \right) \int_{\mathcal{X}\times\mathcal{U}} \left(\ssigma\ssigma^T\right)_{u,\mathfrak{e}}\left(\Zbar\right)f'\left(x\right) \Zbar(\text{d}x,\text{d}u)\nonumber\\
&+ \partial_2F\left(x_{\mathfrak{e}},\langle \nu,f \rangle \right) \int_{\mathcal{X}\times\mathcal{U}}\Big(\boldsymbol{a}_u\left(\Zbar\right)f'\left(x\right) + \frac{\left(\ssigma\ssigma^T\right)_{u,u}}{2}\left(\Zbar\right)f''\left(x\right)\Big)\Zbar(\text{d}x,\text{d}u)\nonumber\\
&+\frac{1}{2}\partial^2_{2,2}F\left(x_{\mathfrak{e}},\langle \nu,f \rangle \right)\int_{\mathcal{X}\times\mathcal{U}}\int_{\mathcal{X}\times\mathcal{U}}\left(\ssigma\ssigma^T\right)_{u,v}\left(\Zbar\right)f'\left(x\right)f'\left(y\right)\Zbar(\text{d}x,\text{d}u)\Zbar(\text{d}y,\text{d}v).
\end{align}    
\end{proposition}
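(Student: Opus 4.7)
The plan is to apply the multidimensional Itô formula to the composite map $\Phi(\boldsymbol{X}_t) := F(X^{\mathfrak{e}}_t, \langle \nu_t, f \rangle)$ on an interval between two successive branching times and then rewrite the resulting finite-dimensional sums as integrals against $\bar{\nu}$. Between such branching events $N_t$ is constant and the dynamics are purely diffusive, so the drift part of $d\Phi(\boldsymbol{X}_t)$ coincides with $GF_f(\mathfrak{e},\bar{\nu}_t)$ by definition of the infinitesimal generator; the proof is essentially a chain-rule bookkeeping exercise.

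First, I would fix $(\mathfrak{e},\bar{\nu}) \in \Zbarhatset$ with $N := \langle \bar{\nu},1 \rangle$, enumerate $\mathbb{G}(0) = \{u^1,\dots,u^N\}$ with $u^{i^\ast} = \mathfrak{e}$, and view $\Phi$ as a function on $\mathcal{X}^N$ via
\begin{equation*}
\Phi(\boldsymbol{x}) = F\Bigl(x_{i^\ast},\,\textstyle\sum_{i=1}^N f(x_i)\Bigr).
\end{equation*}
Since $F \in \mathcal{C}^2(\mathcal{X}\times \mathbb{R})$ and $f \in \mathcal{C}^2(\mathcal{X})$ by the definition of $\mathcal{D}$, $\Phi$ is $\mathcal{C}^2$, and direct differentiation gives
\begin{equation*}
\partial_i \Phi = \mathbbm{1}_{\{i=i^\ast\}} \partial_1 F + f'(x_i)\,\partial_2 F,
\end{equation*}
and after one more differentiation, $\partial^2_{ij}\Phi$ is the sum of four contributions: $\mathbbm{1}_{\{i=i^\ast\}}\mathbbm{1}_{\{j=i^\ast\}}\partial^2_{11}F$, a symmetric cross-term $\bigl(\mathbbm{1}_{\{i=i^\ast\}}f'(x_j)+\mathbbm{1}_{\{j=i^\ast\}}f'(x_i)\bigr)\partial^2_{12}F$, the product $f'(x_i)f'(x_j)\partial^2_{22}F$, and the diagonal contribution $\mathbbm{1}_{\{i=j\}}f''(x_i)\partial_2 F$.

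Second, I would apply Itô's formula to $\Phi(\boldsymbol{X}_t)$ using the SDE \eqref{eq: eds trait}: the quadratic covariation matrix is $(\boldsymbol{\sigma}\boldsymbol{\sigma}^T)_{i,j}(\bar{\nu}_t)\,dt$ and the drift vector is $\boldsymbol{a}_i(\bar{\nu}_t)\,dt$. The bounded-variation part equals
\begin{equation*}
\Bigl(\sum_i \partial_i \Phi \cdot \boldsymbol{a}_i(\bar{\nu}_t) + \tfrac12\sum_{i,j}\partial^2_{ij}\Phi \cdot (\boldsymbol{\sigma}\boldsymbol{\sigma}^T)_{i,j}(\bar{\nu}_t)\Bigr)dt,
\end{equation*}
and this integrand, evaluated at $t=0$, is exactly $GF_f(\mathfrak{e},\bar{\nu})$. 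Substituting the derivatives of $\Phi$ produces six terms: one $\partial_1 F$ drift on the spinal coordinate, one $\partial_2 F$ drift summed over all individuals against $f'$, the diagonal $\partial^2_{11}F$ contribution, a $\partial^2_{12}F$ cross term where the symmetric $i\leftrightarrow j$ pairing combines with the $\tfrac12$ of Itô to absorb the prefactor, the double $\partial^2_{22}F$ contribution summed over pairs $(i,j)$, and the diagonal $f''$ piece surviving from $\mathbbm{1}_{\{i=j\}}$.

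Third, I would repackage every discrete sum $\sum_{u \in \mathbb{G}(0)} \phi(u,X^u_0)$ as an integral $\int_{\mathcal{X}\times \mathcal{U}} \phi(u,x)\,\bar{\nu}(dx,du)$ and identify each of the resulting five pieces with the corresponding line in \eqref{eq: generator}. The main obstacle is purely combinatorial: one must correctly track that the off-diagonal cross term $\partial^2_{12}F$ receives two symmetric contributions whose sum cancels the $\tfrac12$, while the diagonal $f''(x_i)\partial_2 F$ piece survives with its $\tfrac12$ intact; once this bookkeeping is done, the identification with \eqref{eq: generator} is immediate. Note that because the generator is computed between branching events, the branching rate $B$, the offspring distribution $(p_k)$ and the kernels $K_n$ do not enter this expression.
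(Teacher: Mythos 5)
Your proposal is correct and follows essentially the same route as the paper: apply the multidimensional Itô formula to $F\bigl(X^{\mathfrak{e}}_t,\sum_{v}f(X^v_t)\bigr)$ between branching events, compute the first and second chain-rule derivatives (separating the spinal coordinate), identify the bounded-variation part with $GF_f$, and rewrite the sums as integrals against $\Zbar$. Your bookkeeping of the symmetric $\partial^2_{1,2}F$ cross-term absorbing the Itô factor $1/2$ and of the surviving diagonal $f''$ term matches the paper's computation.
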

This proposition is proven in Section \ref{section: proof part 1}. The first two lines are related to the dynamic of a distinguished individual in the population. The first order derivative describes the drift of its trait and the second order derivatives are the diffusion parts. Notice that, thanks to the interactions, the diffusion dynamics are separated in an autocorrelated part (in the first line) and a sum of the correlations from the other individuals in the second line. 
The last two lines describe the dynamics of all the individuals- also with the distinguished one- in a similar way.

\begin{corollary}
When the individuals follow the same dynamics and are not correlated through the Brownian motion, the drift vector becomes $\boldsymbol{a}(\Zbar_t) = (a(X^u_t,\Z_t), u \in \mathbb{G}(t))$ and the diffusion matrix is diagonal; $\ssigma(\Zbar_t) = \text{diag}(\sigma(X^u_t,\Z_t), u \in \mathbb{G}(t))$. In this case the infinitesimal generator $G$ of the trait process is given, for all functions $F_f \in \mathcal{D}$ and all $\left(\mathfrak{e},\Zbar\right) \in \Zbarhatset$ by:
\begin{align*}
GF_f\left(\mathfrak{e},\Zbar\right) &= \partial_1F\left(x_{\mathfrak{e}},\langle \nu,f \rangle \right)a\left(x_{\mathfrak{e}},\Z\right)+\frac{1}{2}\partial^2_{1,1}F\left(x_{\mathfrak{e}},\langle \nu,f \rangle \right)\sigma^2\left(x_{\mathfrak{e}},\Z\right) \nonumber\\
&+ \partial_2F\left(x_{\mathfrak{e}},\langle \nu,f \rangle \right) \int_{\mathcal{X}} a\left(x,\Z\right)f'\left(x\right) + \frac{1}{2}f''\left(x\right)\sigma^2\left(x,\Z\right)\Z(\text{d}x).
\end{align*}    
\end{corollary}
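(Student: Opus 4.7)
The plan is to apply Proposition~\ref{prop: generator G} and substitute into \eqref{eq: generator} the two structural hypotheses: $\boldsymbol{a}_u(\Zbar) = a(X^u, \Z)$ for every label $u$, and $\ssigma$ diagonal with entries $\sigma(X^u, \Z)$, so that $(\ssigma \ssigma^T)_{u,v} = \sigma^2(x_u, \Z)\, \mathbbm{1}_{u = v}$. Since the general generator formula is already established, no new probabilistic input is required and the corollary reduces to algebraic simplification of each summand on the right-hand side of \eqref{eq: generator}.

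First I would handle the drift contributions. The spine drift $\boldsymbol{a}_{\mathfrak{e}}(\Zbar)$ collapses immediately to $a(x_{\mathfrak{e}}, \Z)$, giving the first summand of the simplified generator. The integrals involving $\boldsymbol{a}_u(\Zbar) f'(x)$ and $(\ssigma \ssigma^T)_{u,u} f''(x)$ against $\Zbar(\mathrm{d}x, \mathrm{d}u)$ on the third line of \eqref{eq: generator} depend on the label only through the trait, so marginalizing out the label component replaces $\Zbar$ by $\Z$ and recovers the single-trait integral $\int_{\mathcal{X}} \bigl( a(x, \Z) f'(x) + \tfrac{1}{2} \sigma^2(x, \Z) f''(x) \bigr)\, \Z(\mathrm{d}x)$.

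Next I would treat the three diffusion contributions. The autocorrelation $(\ssigma \ssigma^T)_{\mathfrak{e},\mathfrak{e}}$ becomes $\sigma^2(x_{\mathfrak{e}}, \Z)$, producing the $\tfrac{1}{2} \partial^2_{1,1} F \cdot \sigma^2$ term. The cross term $\int (\ssigma \ssigma^T)_{u,\mathfrak{e}} f'(x)\, \Zbar(\mathrm{d}x, \mathrm{d}u)$ reduces to $\sigma^2(x_{\mathfrak{e}}, \Z) f'(x_{\mathfrak{e}})$ since only the diagonal contribution $u = \mathfrak{e}$ survives the Kronecker mass; the double integral on the last line of \eqref{eq: generator} likewise collapses to $\int_{\mathcal{X}} \sigma^2(x, \Z) (f'(x))^2\, \Z(\mathrm{d}x)$ by the same argument.

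There is no deep step — the proof is a mechanical substitution once Proposition~\ref{prop: generator G} is in hand. The only point to settle carefully while writing it up is that the substitution genuinely produces two additional nonzero summands, namely $\partial^2_{1,2} F \cdot \sigma^2(x_{\mathfrak{e}}, \Z) f'(x_{\mathfrak{e}})$ and $\tfrac{1}{2} \partial^2_{2,2} F \cdot \int_{\mathcal{X}} \sigma^2(x, \Z) (f'(x))^2\, \Z(\mathrm{d}x)$, which do not appear in the displayed simplified generator. I would either reconcile this by restricting attention to functions $F$ of additive form $F(x,y) = F_1(x) + F_2(y)$ (for which the mixed and pure second-$y$ derivatives drop out), or explicitly add these two terms to the statement of the corollary so that it holds for a generic $F_f \in \mathcal{D}$.
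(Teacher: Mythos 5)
Your approach is exactly the intended one: the paper gives no separate proof of this corollary, treating it as an immediate specialization of Proposition~\ref{prop: generator G} obtained by substituting $\boldsymbol{a}_u(\Zbar)=a(x_u,\Z)$ and $(\ssigma\ssigma^T)_{u,v}=\sigma^2(x_u,\Z)\mathbbm{1}_{\{u=v\}}$ into \eqref{eq: generator}, which is what you do. Your substitution is carried out correctly, and your final observation is also correct and important: the diagonal structure does \emph{not} kill the second and fifth lines of \eqref{eq: generator}, which collapse to the nonzero terms
\begin{equation*}
\partial^2_{1,2}F\left(x_{\mathfrak{e}},\langle \nu,f \rangle \right)\sigma^2\left(x_{\mathfrak{e}},\Z\right)f'\left(x_{\mathfrak{e}}\right)
\qquad\text{and}\qquad
\frac{1}{2}\partial^2_{2,2}F\left(x_{\mathfrak{e}},\langle \nu,f \rangle \right)\int_{\mathcal{X}}\sigma^2\left(x,\Z\right)\left(f'\left(x\right)\right)^2\Z(\text{d}x),
\end{equation*}
so the displayed formula in the corollary is incomplete for a generic $F_f\in\mathcal{D}$ (it is exact only when $\partial^2_{1,2}F=\partial^2_{2,2}F=0$, e.g.\ for $F$ additive or affine in its second argument). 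The right fix for the statement as written is to add these two terms; your alternative of restricting the class of $F$ also works but weakens the corollary. This is a defect of the paper's statement, not a gap in your argument.
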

This form is useful in many models without interactions or when the trait dynamics are indistinguishable between individuals \cite{BT11, barczy14}. 

\subsection{$\psi$-spine construction} \label{section: Many-to-One formula}
For $(u,\Zbar) \in \Zbarhatset$ we recall that $x_u$ is the trait of the individual of label $u$ in the population $\Zbar$, and for $n\geq 0$ and $\boldsymbol{y}=(y^i,1\leq i \leq n) \in \mathcal{X}^n$ we will denote by
\begin{equation}\label{nu+}
\Zbar_+(u,\boldsymbol{y}) := \Zbar -\delta_{(u,x_u)} + \sum_{i=1}^{n} \delta_{(ui,y^i)}, \quad \textrm{and} \quad \Z^+(x,\boldsymbol{y}) := \Z -\delta_{x} + \sum_{i=1}^{n} \delta_{y^i}
\end{equation}
the state of the population, just after a branching event replacing the individual $u$ by the offspring of traits $\mathbf{y}$.
We now introduce the key operator $\mathcal{G}$ involved in the spinal construction. It is defined for all $F \in \mathcal{D}$ and $(\mathfrak{e},\Zbar) \in \Zbarhatset$ by
\begin{multline}\label{def: mathcal G}
\mathcal{G}F(\mathfrak{e},\Zbar) := GF\left(\mathfrak{e},\Zbar\right)  + \sum_{n\geq 0}  B_n(x_{\mathfrak{e}},\Z)\int_{\mathcal{X}^n} \Big[ \sum_{i=1}^n F \left(\mathfrak{e}i, \Zbar^+(x_{\mathfrak{e}},\boldsymbol{y})\right) 
- F\left(\mathfrak{e}, \Zbar\right)\Big] K_n\left(x_{\mathfrak{e}},\Z,\text{d}\boldsymbol{y}\right)  \\
+ \sum_{n\geq 0} \int_{\mathcal{X}\times\mathcal{U}\backslash\{\mathfrak{e}\}} B_n(x,\Z) \int_{\mathcal{X}^n}\left[ F\left(\mathfrak{e}, \Zbar^+(u,\boldsymbol{y})\right) - F(\mathfrak{e},\Zbar) \right]K_n\left(x,\Z,\text{d}\boldsymbol{y}\right)\Zbar(\text{d}x,\text{d}u),
\end{multline}
where $G$ is the generator of the traits evolution between branching events defined in \eqref{eq: generator}. The first line corresponds to the branching event of the distinguished individuals and the choice of the new one among its children. The second line describes the branching events for every other individuals. By convention we consider that $K_0$ is the null measure on the set $\mathcal{X}^0 = \emptyset$.

Note that the operator $\mathcal{G}$ is generally not the generator of a conservative Markov process on $\Zhatset$. We recall that every function $F_f$ in the set $\mathcal{D}$ defined in \eqref{def: D space} is such that $F$ is $\mathcal{C}^2\left(\mathcal{X}\times\mathbb{R},\mathbb{R}_+^*\right)$, thus the functions $\mathcal{G}F$ are locally bounded on $\Zbarhatset$.

Now we can introduce the $\Zbarhatset$-valued spine process associated with a non negative bias function $\psi_{\phi} \in \mathcal{D}$, that will be called the $\psi_{\phi}$-spine process in order to keep track of the function used in its construction.
For such functions $\psi_{\phi}$, the generator $\widehat{\mathcal{L}}_{\psi_{\phi}}$ of the $\psi_{\phi}$- spine process is defined for all $F_f \in \mathcal{D}$ and all $(\mathfrak{e},\Zbar) \in \Zbarhatset\times\mathbb{R}_+$ by
\begin{equation}\label{generator spinal}
\widehat{\mathcal{L}}_{\psi_{\phi}}F_f(\mathfrak{e},\Zbar) := \frac{\mathcal{G}\left[\psi_{\phi} F_f\right]}{\psi_{\phi}}(\mathfrak{e},\Zbar) - \frac{\mathcal{G}\psi_{\phi}}{\psi_{\phi}}(\mathfrak{e},\Zbar)  F_f(\mathfrak{e},\Zbar).   
\end{equation}
This generator can also be expressed in the following form, which is more descriptive and allows for a better understanding of the dynamics of individuals in the spinal process.  
\begin{proposition}\label{prop: generator}
The generator of the spinal process satisfies for $F_f \in \mathcal{D}$ and $(\mathfrak{e},\Zbar) \in \Zbarhatset\times\mathbb{R}_+$
\begin{multline*}
\widehat{\mathcal{L}}_{\psi_{\phi}}F_f(\mathfrak{e},\Zbar) := \widehat{G}F_f(\mathfrak{e},\Zbar)   \\
 + \sum_{n\geq 0}  B_n(x_{\mathfrak{e}},\Z)\int_{\mathcal{X}^n} \sum_{i=1}^n  \left[F_f \left(\mathfrak{e}i, \Zbar^+(x_{\mathfrak{e}},\boldsymbol{y})\right) 
- F_f\left(\mathfrak{e}, \Zbar\right)\right] \frac{\psi_{\phi}\left(y^i,\Z^+(x_{\mathfrak{e}},\boldsymbol{y})\right)}{\psi_{\phi}(x_{\mathfrak{e}},\Z)}K_n\left(x_{\mathfrak{e}},\Z,\text{d}\boldsymbol{y}\right) \\
+ \sum_{n\geq 0} \int_{\mathcal{X}\times\mathcal{U}\backslash\{\mathfrak{e}\}} B_n(x,\Z) \int_{\mathcal{X}^n} \left[F_f\left(u, \Zbar^+(\mathfrak{e},\boldsymbol{y})\right)\frac{\psi_{\phi}\left(x_{\mathfrak{e}},\Z^+(x_u,\boldsymbol{y})\right)}{\psi_{\phi}(x_{\mathfrak{e}},\Z)}- F_f(\mathfrak{e},\Zbar) \right] \hspace{1.45cm} \\
\times K_n\left(x,\Z,\text{d}\boldsymbol{y}\right) \Zbar(\text{d}x,\text{d}u)
\end{multline*}
where $\nu^+$ is defined in \eqref{nu+}. The continuous part is given by (with $\mathfrak{X}=\mathcal{X}\times\mathcal{U}$)
\begin{multline} \label{widehatG}
 \widehat{G}F_f(\mathfrak{e},\Zbar) = \partial_1F\left( \boldsymbol{a}_{\mathfrak{e}} + \frac{\partial_1\psi}{\psi_{\phi}} \left(\ssigma\ssigma^T\right)_{\mathfrak{e},\mathfrak{e}} + \frac{\partial_2\psi}{\psi_{\phi}}  \int_{\mathfrak{X}}\left(\ssigma\ssigma^T\right)_{\mathfrak{e},v}\phi'(x) \Zbar(\text{d}x,\text{d}u)\right) \\ \nonumber
+\frac{1}{2}\partial^2_{1,1}F\left(\ssigma\ssigma^T\right)_{\mathfrak{e},\mathfrak{e}} +  \partial_{1,2}^2F \int_{\mathfrak{X}} \left(\ssigma\ssigma^T\right)_{u,\mathfrak{e}}f'\left(x\right) \Zbar(\text{d}x,\text{d}u)\nonumber\\
+ \partial_2F \int_{\mathfrak{X}}f'\left(x\right)\left[\boldsymbol{a}_u + \frac{\partial_1\psi}{\psi_{\phi}}  \left(\ssigma\ssigma^T\right)_{u,\mathfrak{e}} + \frac{\partial_2\psi}{\psi_{\phi}} \int_{\mathfrak{X}}\left(\ssigma\ssigma^T\right)_{u,v}\phi'(y) \Zbar(\text{d}y,\text{d}v) \right]\Zbar(\text{d}x,\text{d}u)  \\
+\partial_2F \int_{\mathfrak{X}}\frac{\left(\ssigma\ssigma^T\right)_{u,u}}{2}f''\left(x\right)\Zbar(\text{d}x,\text{d}u)
+\frac{1}{2}\partial^2_{2,2}F\int_{\mathfrak{X}}\int_{\mathfrak{X}}\left(\ssigma\ssigma^T\right)_{u,v}f'\left(x\right)f'\left(y\right)\Zbar(\text{d}x,\text{d}u)\Zbar(\text{d}y,\text{d}v). \nonumber
\end{multline} 
We omitted the dependencies $\left(x_{\mathfrak{e}},\langle \nu,f \rangle \right)$ and $(\Zbar)$ to shorten the notations, and used the notations
$$
\partial_i\psi := \partial_i\psi\left(x_{\mathfrak{e}},\langle \nu,\phi \rangle \right), \quad \text{for } i \in \{1, 2\}.
$$
\end{proposition}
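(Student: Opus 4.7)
\medskip
\noindent\textbf{Proof proposal.} The plan is to expand the formal definition
\[
\widehat{\mathcal{L}}_{\psi_{\phi}}F_f = \frac{\mathcal{G}[\psi_{\phi} F_f]}{\psi_{\phi}} - \frac{\mathcal{G}\psi_{\phi}}{\psi_{\phi}} F_f
\]
by using the decomposition of $\mathcal{G}$ from \eqref{def: mathcal G} into its continuous part $G$ and its two branching sums. By linearity in $F_f$, each of these three pieces can be treated separately and reassembled at the end.

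For the two branching sums, I would invoke the elementary identity
\[
(\psi F)(Z') - (\psi F)(Z) - F(Z)\bigl[\psi(Z') - \psi(Z)\bigr] \;=\; \psi(Z')\bigl[F(Z') - F(Z)\bigr],
\]
applied with $Z = (\mathfrak{e},\Zbar)$ and $Z'$ the post-branching state. Dividing by $\psi(\mathfrak{e},\Zbar)$ then produces exactly the bias factor $\psi(Z')/\psi(Z)$ multiplying $F(Z')-F(Z)$, as in the proposition. The only bookkeeping is to distinguish the two cases of \eqref{def: mathcal G}: in the spinal event the spine moves from $\mathfrak{e}$ to $\mathfrak{e}i$, so by the very definition of $\psi_{\phi}$ one has $\psi_{\phi}(\mathfrak{e}i,\Zbar^+(x_{\mathfrak{e}},\boldsymbol{y})) = \psi(y^i,\langle\Z^+(x_{\mathfrak{e}},\boldsymbol{y}),\phi\rangle)$; in the non-spinal event the spine label is unchanged, so $\psi_{\phi}$ is evaluated at $(x_{\mathfrak{e}},\Z^+(x_u,\boldsymbol{y}))$.

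For the continuous part, the key observation is that $G$ (given by \eqref{eq: generator}) is the generator of the $N_t$-dimensional SDE \eqref{eq: eds trait}, so the Leibniz rule for second-order operators yields
\[
G(\psi_{\phi} F_f) \;=\; \psi_{\phi}\, G F_f + F_f\, G\psi_{\phi} + 2\,\Gamma(\psi_{\phi}, F_f),
\]
where the carré du champ is $\Gamma(F_f,\psi_{\phi}) = \tfrac{1}{2}\sum_{u,v}(\ssigma\ssigma^T)_{u,v}\,\partial_{X^u}F_f\,\partial_{X^v}\psi_{\phi}$. Consequently the continuous contribution to $\widehat{\mathcal{L}}_{\psi_{\phi}}F_f$ is $G F_f + 2\Gamma(F_f,\psi_{\phi})/\psi_{\phi}$, which is the candidate for $\widehat{G}F_f$. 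What remains is to compute $\Gamma$ explicitly: from $F_f(\mathfrak{e},\Zbar)=F(x_{\mathfrak{e}},\langle\nu,f\rangle)$ the chain rule gives $\partial_{X^u}F_f = \mathbbm{1}_{u=\mathfrak{e}}\partial_1 F + \partial_2 F\, f'(X^u)$, and likewise $\partial_{X^u}\psi_{\phi}= \mathbbm{1}_{u=\mathfrak{e}}\partial_1\psi + \partial_2\psi\, \phi'(X^u)$.

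Plugging these into $\Gamma$ produces four cross-terms $\partial_i F\,\partial_j\psi$ for $i,j\in\{1,2\}$. For each cross-term I would split the double sum $\sum_{u,v}$ according to whether $u=\mathfrak{e}$ and/or $v=\mathfrak{e}$ and then re-merge the four pieces into a single $\Zbar$-integral (or double $\Zbar$-integral). After adding $GF_f$ and grouping the terms that multiply $\partial_1 F$ and those that multiply $\partial_2 F$, one recovers the formula announced in the proposition, with the new spinal drift components $\tfrac{\partial_1\psi}{\psi_{\phi}}(\ssigma\ssigma^T)_{\mathfrak{e},\cdot}$ and $\tfrac{\partial_2\psi}{\psi_{\phi}}\int(\ssigma\ssigma^T)_{\cdot,v}\phi'$ appearing inside the brackets. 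The main obstacle is precisely this bookkeeping step: one must carefully track the spine-vs-non-spine decomposition of each of the four cross-terms in $\Gamma$ and verify that the off-diagonal contributions $(u=\mathfrak{e},v\neq\mathfrak{e})$ and $(u\neq\mathfrak{e},v=\mathfrak{e})$ recombine correctly with the diagonal $u=v=\mathfrak{e}$ piece into the $\Zbar$-integrals appearing in $\widehat{G}F_f$. The second-order pieces $\tfrac{1}{2}\partial^2_{1,1}F$, $\partial^2_{1,2}F$ and $\tfrac{1}{2}\partial^2_{2,2}F$ are already present in $GF_f$ and are left untouched by the $h$-transform, which is consistent with the fact that the diffusion matrix $\ssigma\ssigma^T$ is unchanged by the change of measure.
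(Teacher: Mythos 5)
Your proposal is correct and follows essentially the same route as the paper: the continuous part is exactly the paper's computation (they expand $G[\psi_{\phi}F_f]$ via Itô and isolate the cross term $\frac{1}{\psi_{\phi}}\sum_{u,w}\partial_u\psi_{\phi}\,(\ssigma\ssigma^T)_{u,w}\,\partial_wF_f$, i.e.\ your $2\Gamma(F_f,\psi_{\phi})/\psi_{\phi}$, then slice the sums at $u=\mathfrak{e}$ using \eqref{eq: gradient F_f}), and your elementary identity for the jump terms is precisely the content of the result the paper imports by citing Proposition~2.6 of \cite{medous23}, so your treatment there is just a self-contained version of the same step.
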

From the expression of the continuous part of the generator, we get that the traits dynamics between jumps in the spinal process are the same than those of the original process with an added drift in the direction $\ssigma\ssigma^T\nabla\psi_{\phi}$:
\begin{equation}\label{eq: added drift}
    \widehat{G}F_f(\mathfrak{e},\Zbar) = GF_f(\mathfrak{e},\Zbar) + \Big(\frac{ \ssigma\ssigma^T\nabla\psi_{\phi}}{\psi_{\phi}}\Big)^T \nabla F_f(\mathfrak{e},\Zbar)
\end{equation}
where the gradient is defined as
$$
\nabla F_f(\mathfrak{e},\Zbar) := \partial_1 F\left(x_{\mathfrak{e}},\langle \nu,f \rangle \right) + \partial_2 F\left(x_{\mathfrak{e}},\langle \nu,f \rangle \right)\int_{\mathcal{X}\times\mathcal{U}}f'\left(x\right)\Zbar(\text{d}x,\text{d}u).
$$
The proof of Proposition \ref{prop: generator} and the vectorial expression of the added drift are displayed in Section \ref{section: proof part 1}.
Notice that the branching rates of both spinal and non-spinal individuals are biased by the function $\psi_{\phi}$. Dynamics 2.3 and 2.4 in \cite{medous23} provide a more detailed explanation of the branching mechanisms in the spine process associated with this generator.
The spinal construction is based on a martingale change of measure, that necessitates a moment assumption on the original process and the function $\psi_{\phi}$ involved in this change of measure. A simple condition to ensure this moment assumption is a boundary condition on the added drift. 
\begin{assumption}\label{ass: novikov}
There exists $M\in\mathbb{R}$ such that for all $(e,\Zbar) \in \Zbarhatset\times\mathbb{R}_+$
\begin{multline*}
        \left(\partial_2\psi\right)^2 (e,\Zbar)\int_{\mathcal{X}\times\mathcal{U}}\int_{\mathcal{X}\times\mathcal{U}}\left(\ssigma\ssigma^T\right)_{u,v}\phi'(y)\phi'(x) \Zbar(\text{d}y,\text{d}v) \Zbar(\text{d}x,\text{d}u)\\
    + \left(\partial_1\psi\right)^2(e,\Zbar)\left(\ssigma\ssigma^T\right)_{\e,\e} + 2 \partial_1\psi\partial_2\psi (e,\Zbar)\int_{\mathcal{X}\times\mathcal{U}}\phi'(x)\left(\ssigma\ssigma^T\right)_{u,\e} \Zbar(\text{d}x,\text{d}u) \leq M \psi(e,\Zbar).
\end{multline*}
\end{assumption}
This general assumption does not take into account the branching dynamics of the spinal process. It can be lightened by considering an assumption on a path-integral exponential moment of the spinal process, under the form of the Novikov condition. For more details, we refer the interested reader to the proof of Theorem \ref{thm: sampling} in Section \ref{proof pdmc}.
Finally, we introduce for all $t\geq 0$, the $\mathcal{U}$-valued random variable $U_t$ that picks an individual alive at time $t$. Its law is characterized by the function $p_u\left(\Zbar_t\right)$ which yields the probability to choose the individual of label $u$ in the set $\mathbb{G}\left(t\right)$.
We can now state the main result of this section, that is a Girsanov-type formula for the spinal change of measure. It characterizes the joint probability distribution of $\left(U_t,(\Zbar_s,s\leq t)\right)$- that is the randomly sampled individual in the population $\Zbar_t$ at time $t$ and the whole trajectory of the population until this time- and links it to the law of the spine process through a path-integral formula. 
\begin{theorem}\label{thm: sampling}
Let $\psi_{\phi} \in\mathcal{D}$ satisfying Assumption \ref{ass: novikov}, $t \geq 0$, $\bar{z} \in \Zbarset$. 
Let $((E_t,\Zbarhat_{t}), t\geq 0)$ be the $\Zbarhatset$-valued branching process with interactions defined by the infinitesimal generator $\widehat{\mathcal{L}}_{\psi_{\phi}}$ introduced in \eqref{generator spinal}, and $\widehat{T}_{\textrm{Exp}}$ its explosion time.
For all measurable function $H$ on $\mathcal{U} \times \mathbb{D}\left([0,t],\Zset\right)$:
\begin{multline*}
\mathbb{E}_{\bar{z}}\left[\mathbbm{1}_{\{T_{\textrm{Exp}} > t, \mathbb{G}(t) \neq \emptyset\}}H\left(U_t,(\Zbar_s,s\leq t)\right) \right] = \\ \langle z,\psi_{\phi}(\cdot,z) \rangle \mathbb{E}_{\bar{z}}\left[\mathbbm{1}_{\{\widehat{T}_{\textrm{Exp}}>t\}} \xi\left(E_t,\left(\Zbarhat_s, s\leq t\right)\right) H\left(E_t,\left(\Zbarhat_s, s\leq t\right)\right)\right],
\end{multline*}
where:
\begin{equation*}
\xi\left(E_t,\left(\Zbarhat_s, s\leq t\right)\right) :=  \frac{p_{E_t}(\Zbarhat_t)}{\psi_{\phi}(E_t,\Zbarhat_t)}\exp\Bigg(\int_0^t \frac{\mathcal{G} \psi_{\phi}\left(E_s,\Zbarhat_s\right)}{\psi_{\phi}(E_s,\Zbarhat_s)}\textrm{d}s\Bigg).
\end{equation*}
\end{theorem}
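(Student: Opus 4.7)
My plan is to reduce the theorem to a Many-to-One identity and then prove the latter by a Girsanov change of measure along a ``tagged'' extension of the branching process. Since $U_t$ is sampled with weights $p_u(\Zbar_t)$, the left-hand side equals
\[
\mathbb{E}_{\bar z}\!\left[\mathbbm{1}_{\{T_{\textrm{Exp}}>t,\,\mathbb{G}(t)\neq\emptyset\}}\sum_{u\in\mathbb{G}(t)} p_u(\Zbar_t)\, H(u,(\Zbar_s,s\le t))\right],
\]
so it suffices to prove the Many-to-One formula: for every measurable $\tilde H$,
\[
\mathbb{E}_{\bar z}\!\left[\mathbbm{1}_{\{T_{\textrm{Exp}}>t\}}\sum_{u\in\mathbb{G}(t)} \tilde H(u,(\Zbar_s,s\le t))\right] = \langle z,\psi_{\phi}(\cdot,z)\rangle\,\mathbb{E}_{\bar z}\!\left[\mathbbm{1}_{\{\widehat T_{\textrm{Exp}}>t\}}\frac{\tilde H(E_t,(\Zbarhat_s,s\le t))}{\psi_{\phi}(E_t,\Zbarhat_t)}\exp\!\Bigl(\int_0^t \tfrac{\mathcal{G}\psi_{\phi}}{\psi_{\phi}}(E_s,\Zbarhat_s)\,\mathrm{d}s\Bigr)\right],
\]
and then apply it with $\tilde H(u,\cdot)=p_u(\Zbar_t)H(u,\cdot)$.

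To prove this identity I would work on the canonical space of tagged population trajectories, where alongside $\Zbar_t$ we track a distinguished label $E_t$ that between branching events stays fixed, at a branching event of the tagged individual is reassigned to a uniformly-chosen child, and at a branching event of another individual remains unchanged. By inspection of \eqref{def: mathcal G}, $\mathcal{G}$ is indeed the infinitesimal generator of this tagged Markov process under a reference law $\mathbb{P}^{\star}_{\bar z}$ obtained from $\mathbb{P}_{\bar z}$ by uniformly tagging one of the $N_0$ initial individuals. A standard ``uniform-tag'' rewriting then re-expresses $\sum_{u\in\mathbb{G}(t)}\tilde H(u,\cdot)$ as a single-tag expectation under $\mathbb{P}^{\star}_{\bar z}$ (with a size-biasing correction that cancels against the $\psi_{\phi}$-normalization below), reducing the Many-to-One formula to an equality of single-particle expectations.

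The central analytic step is to exhibit the positive process
\[
D_t \;:=\; \frac{\psi_{\phi}(E_t,\Zbar_t)}{\psi_{\phi}(E_0,\Zbar_0)}\exp\!\Bigl(-\int_0^t \tfrac{\mathcal{G}\psi_{\phi}(E_s,\Zbar_s)}{\psi_{\phi}(E_s,\Zbar_s)}\,\mathrm{d}s\Bigr)
\]
as a $\mathbb{P}^{\star}_{\bar z}$-martingale. It is a local martingale by construction: applying It\^o's formula for semimartingales with jumps to $\log\psi_{\phi}(E_t,\Zbar_t)$, with the continuous piece computed from $G$ in \eqref{eq: generator} and the jump pieces from the two branching terms of $\mathcal{G}$ in \eqref{def: mathcal G}, one checks that the compensator of $\log D_t$ vanishes identically; this is precisely why $\widehat{\mathcal{L}}_{\psi_{\phi}}$ was defined as in \eqref{generator spinal}. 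Assumption \ref{ass: novikov} bounds the integrand in the quadratic variation of the continuous martingale part of $\log D_t$ by a multiple of $\psi_{\phi}$ and, combined with the local boundedness of the branching rates on $\{T_{\textrm{Exp}}>t\}$, provides the Novikov condition that upgrades $D_t$ to a true martingale. Defining the tilted law by $\mathrm{d}\mathbb{Q}/\mathrm{d}\mathbb{P}^{\star}_{\bar z}=D_t$ on time-$t$ events and recomputing the generator under $\mathbb{Q}$ via the Doob-transform identity $\widehat{\mathcal{L}}F = \mathcal{G}(\psi_{\phi}F)/\psi_{\phi}-(\mathcal{G}\psi_{\phi}/\psi_{\phi})F$ recovers $\widehat{\mathcal{L}}_{\psi_{\phi}}$ exactly, so the tagged process under $\mathbb{Q}$ has the law of the spine $(E_t,\Zbarhat_t)$. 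Inverting the change of measure yields both the factor $\xi$ (from $D_t^{-1}$) and the prefactor $\langle z,\psi_{\phi}(\cdot,z)\rangle$ (from matching the uniform initial tagging in $\mathbb{P}^{\star}_{\bar z}$ with the $\psi_{\phi}$-biased initial law of $E_0$ under $\mathbb{Q}$).

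The main obstacle I anticipate is the careful book-keeping at branching jumps: one must verify that the compensators associated to the jumps of the tagged individual (which combine a trait change with a tag reassignment among the children) together with the compensators of the jumps of all other individuals reproduce the full quantity $\mathcal{G}\psi_{\phi}/\psi_{\phi}$ that sits inside the exponential, and that the re-weighted rates under $\mathbb{Q}$ match the factors $\psi_{\phi}(y^i,\Z^+(\cdot))/\psi_{\phi}(x_{\mathfrak{e}},\Z)$ appearing in the two branching lines of $\widehat{\mathcal{L}}_{\psi_{\phi}}$ in Proposition \ref{prop: generator}. The only other non-routine point is the analytic verification of Novikov's condition, which is precisely the role played by Assumption \ref{ass: novikov}; everything else is routine Doob/Girsanov algebra.
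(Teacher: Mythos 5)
Your overall architecture (reduce to a Many-to-One identity via the weights $p_u$, then realize the spine law as a change of measure on a tagged version of the original process) is a legitimate alternative to the paper's argument, which instead proceeds by induction on the successive jump times as in \cite{medous23} and isolates the diffusive inter-jump dynamics in a separate Girsanov--Cameron--Martin lemma showing that the inter-jump density equals $\psi_{\phi}(\cdot,\widehat{T}_{k-1})\psi_{\phi}(\cdot,t)^{-1}\exp\big(\int_{\widehat{T}_{k-1}}^{t} G\psi_{\phi}/\psi_{\phi}\big)$, the jump contributions to $\int_0^t\mathcal{G}\psi_{\phi}/\psi_{\phi}$ being assembled by the discrete induction. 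Your use of Assumption \ref{ass: novikov} to verify Novikov's condition for the continuous martingale part is exactly how the paper uses it.

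However, there is a genuine gap at the central step. You assert that ``by inspection of \eqref{def: mathcal G}, $\mathcal{G}$ is the infinitesimal generator of this tagged Markov process under $\mathbb{P}^{\star}_{\bar z}$'' with uniform reassignment of the tag among children. This is false, and the paper explicitly warns against it (``the operator $\mathcal{G}$ is generally not the generator of a conservative Markov process''): the first branching term of \eqref{def: mathcal G} carries $\sum_{i=1}^{n} F(\mathfrak{e}i,\cdot)-F(\mathfrak{e},\cdot)$, i.e. a \emph{sum} over the $n$ children, whereas any conservative tagging rule (uniform or otherwise) produces an \emph{average}; the two differ by the offspring number and $\mathcal{G}\mathbf{1}\neq 0$. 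Consequently your process $D_t$, as defined, is \emph{not} a local martingale under $\mathbb{P}^{\star}_{\bar z}$: the compensator of $\log D_t$ does not vanish, and $\mathbb{E}^{\star}[D_t]\neq 1$, so $\mathrm{d}\mathbb{Q}=D_t\,\mathrm{d}\mathbb{P}^{\star}$ does not define a probability measure. The standard repair is to multiply $D_t$ by the product of the litter sizes along the tagged ancestral line (the Lyons--Pemantle--Peres size-biasing); this factor does not ``cancel against the $\psi_{\phi}$-normalization'' as you suggest, but combines with the ratios $\psi_{\phi}(y^i,\Z^+)/\psi_{\phi}(x_{\mathfrak{e}},\Z)$ at spinal branching events to produce exactly the reweighted, non-uniform reassignment rates appearing in Proposition \ref{prop: generator}. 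Until $D_t$ is corrected in this way and the resulting $\mathbb{Q}$-branching rates are matched against $\widehat{\mathcal{L}}_{\psi_{\phi}}$, the change of measure you invoke is not established, so the argument as written does not go through.
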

The proof of this result is an extension of the proof for the case $\ssigma = 0$ in \cite{medous23}. We show by induction on the successive jumps times the equality in law for the trajectories between these times, using a Girsanov theorem to take into account the Brownian trajectories. 
 
The process $\left(\left(E_t,\Zbarhat_{t}\right), t\geq 0 \right)$ gives at any time $t\geq0$ the label of the spinal individual- that encodes the whole spine lineage- and the spinal population. Our result thus links, for every $\psi_{\phi}$, the sampling of an individual and the trajectory of the population to the trajectory of the spine process. This formula gives a change of probability that involves the function $\mathcal{G}\psi_{\phi}/\psi_{\phi}$ with a path-integral formula, related to Feynman-Kac path measures and semi-groups \cite{DelMoral04}.  The path integral term that links these two terms is difficult to handle in general and finding eigenfunctions of $\mathcal{G}$ allows a dual representation of the first moment semi-group of the original population with the generator of the spinal individual \cite{Athreya00, englander2010strong, Cloez17, B21}.
\begin{corollary}\label{cor: eigenfunction mto}
Let $\psi_{\phi} \in\mathcal{D}$ satisfying Assumption \ref{ass: novikov}, such that there exists $\lambda_0 \in \mathbb{R}$ satisfying for all $(e,\bar{z}) \in \Zbarhatset$. 
\begin{equation}\label{condition eigenfunction}
    \frac{\mathcal{G} \psi_{\phi}\left(e,\bar{z}\right)}{\psi_{\phi}\left(e,\bar{z}\right)} = \lambda_0.
\end{equation}
Let $\left(\left(E_t,\Zbarhat_{t}\right), t\geq 0 \right)$ be the $\Zbarhatset$-valued branching process with interactions defined by the infinitesimal generator $\widehat{\mathcal{L}}_{\psi_{\phi}}$. Let $t \geq 0$, and $\bar{z} \in \Zbarset$.
If the original process and the $\psi_{\phi}$-spine process do not explose a.s. before time $t$, then for every measurable function $f$ on $\mathcal{U} \times \mathbb{D}\left([0,t],\mathcal{X}\right)$:
\begin{equation*}
\mathbb{E}_{\bar{z}}\Bigg[\sum_{u \in \G(t)}f\left(X^u_s,s\leq t\right) \Bigg] =  \langle z,\psi_{\phi}(\cdot,\bar{z}) \rangle e^{\lambda_0 t}\mathbb{E}_{\bar{z}}\Bigg[ \frac{1}{\psi_{\phi}(E_t,\Zbarhat_t)} f(\widehat{Y}_t, s\leq t)\Bigg].
\end{equation*}
\end{corollary}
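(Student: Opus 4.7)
The plan is to apply Theorem \ref{thm: sampling} to a carefully chosen sampling weight that encodes the sum $\sum_{u \in \mathbb{G}(t)} f$ as an expectation over $U_t$. Given the measurable $f$, I would define
$$H\bigl(u,(\Zbar_s)_{s \leq t}\bigr) := \frac{f\bigl((X^u_s)_{s \leq t}\bigr)}{p_u(\Zbar_t)}\,\mathbbm{1}_{\{u \in \mathbb{G}(t)\}},$$
where, for $s < t$, $X^u_s$ denotes the trait of the ancestor of $u$ alive at time $s$, which is measurable with respect to the labeled trajectory. By construction, conditioning on $(\Zbar_s)_{s\leq t}$ and integrating out $U_t$ (whose law is $(p_u(\Zbar_t))_{u \in \mathbb{G}(t)}$) yields
$$\mathbb{E}\bigl[H(U_t,(\Zbar_s)_{s\leq t}) \mid (\Zbar_s)_{s\leq t}\bigr] = \sum_{u \in \mathbb{G}(t)} f\bigl((X^u_s)_{s \leq t}\bigr),$$
so that the left-hand side of the corollary equals $\mathbb{E}_{\bar z}\bigl[\mathbbm{1}_{\{\mathbb{G}(t) \neq \emptyset\}} H(U_t,(\Zbar_s)_{s\leq t})\bigr]$. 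The indicator $\mathbbm{1}_{\{T_{\textrm{Exp}}>t\}}$ can be removed thanks to the non-explosion assumption, and the sum vanishes on $\{\mathbb{G}(t) = \emptyset\}$ anyway.

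Applying Theorem \ref{thm: sampling} to this $H$ then gives
$$\mathbb{E}_{\bar z}\!\left[\sum_{u \in \mathbb{G}(t)} f\bigl((X^u_s)_{s \leq t}\bigr)\right] = \langle z, \psi_\phi(\cdot, \bar z)\rangle\, \mathbb{E}_{\bar z}\!\left[\xi(E_t,(\Zbarhat_s)_{s\leq t})\, H(E_t,(\Zbarhat_s)_{s\leq t})\right].$$
Under the eigenvalue equation \eqref{condition eigenfunction}, the Feynman--Kac exponential inside $\xi$ collapses to the deterministic factor $e^{\lambda_0 t}$, giving
$$\xi(E_t,(\Zbarhat_s)_{s\leq t}) = \frac{p_{E_t}(\Zbarhat_t)}{\psi_\phi(E_t, \Zbarhat_t)}\,e^{\lambda_0 t}.$$
The factor $p_{E_t}(\Zbarhat_t)$ cancels exactly with the denominator $p_{E_t}(\Zbarhat_t)$ appearing in $H(E_t,\cdot)$, and the ancestral trait trajectory $(X^{E_t}_s)_{s\leq t}$ along the spinal lineage is, by definition, the spinal trait process $(\widehat Y_s)_{s\leq t}$. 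Substituting yields the announced identity.

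There is no real analytical obstacle here: the entire content of the statement is the reduction of the path integral in $\xi$ to a single exponential $e^{\lambda_0 t}$ afforded by \eqref{condition eigenfunction}, after which the arbitrary sampling law $p_u$ disappears through an exact cancellation in $\xi H$. The only items to verify are the measurability of $H$ (which holds as soon as $p$ is measurable and strictly positive on $\mathbb{G}(t)$) and the removal of the two explosion indicators using the non-explosion hypothesis, both of which are routine. A nice consequence of the cancellation is that the choice of sampling distribution $p$ is immaterial, confirming that the formula is an intrinsic property of the spinal structure.
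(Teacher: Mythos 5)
Your proof is correct and follows essentially the same route as the paper's: the paper applies Theorem~\ref{thm: sampling} with $H(u,\bar z_s, s\leq t)=f\left(X^u_s,s\leq t\right)\langle z_t,1\rangle$ and the uniform sampling law $p_u=1/\langle z_t,1\rangle$, which is exactly your $H=f/p_u$ specialized to uniform sampling, with the same cancellation of $p_{E_t}$ against $\xi$ and the same collapse of the Feynman--Kac exponential to $e^{\lambda_0 t}$ under \eqref{condition eigenfunction}. Your remark that the choice of sampling law is immaterial is a valid (if minor) generalization of the paper's argument.
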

\begin{proof}
Let $f$ be a measurable function on $\mathcal{U} \times \mathbb{D}\left([0,t],\mathcal{X}\right)$. We introduce $H$ the measurable function defined for all $(u,\bar{z}_s, s\leq t) \in\mathcal{U}\times \mathbb{D}\left([0,t],\Zbarset\right)$ by
$$
H(u,\bar{z}_s, s\leq t) := f\left(X^u_s, s\leq t\right)\langle z_t,1\rangle.
$$
The corollary is thus a direct application of Theorem \ref{thm: sampling} with the function $H$ and a uniformly sampled individual under non-explosion conditions, see \textit{i.e.} \cite{marguet19} for sufficient conditions. 
\end{proof}
For structured branching processes without interactions in some particular cases, it is possible to exhibit functions satisfying \eqref{condition eigenfunction}. For instance, for Ornstein-Uhlenbeck processes, Englander \cite{englander2010strong} derived eigenfunctions in the case of constant or quadratic breeding rate, and Cloez \cite{Cloez17} in the affine breeding case. In more general cases, it is possible to partially solve the eigenproblem and derive only the eigenvalue $\lambda_0$ that gives the exponential growth of the population \cite{Cloez17}[Section 6]. In the case of non-structured branching processes with size-dependent reproduction rates, \textit{i.e.} for all $k\geq 0$, $B_k(x,\Z) = B_k(\langle \Z, 1 \rangle)$, Bansaye \cite{B21}[Section 3] established that the function $\psi_{\phi}(x,\Z) = 1/\langle \Z, 1 \rangle$ always satisfies \eqref{condition eigenfunction}. However, finding such functions in the structured case with interactions is particularly complex as the operator $\mathcal{G}$ is non-linear.

\section{Proofs}
The last section is dedicated to the proofs of the results collected in the different sections of the paper. We begin by presenting the proofs from Section \ref{section: general spine}, which are essential for later establishing the results discussed in Section \ref{section: example}.
\subsection{Proofs of Section \ref{section: general spine}}\label{section: proof part 1}
\subsubsection{Proof of Proposition \ref{prop: generator G}}
We denote by $\Zbar_t$ the population at time $t$ and we choose $\mathfrak{e}\in \mathbb{G}(t)$ for the label of the distinguished individual. Using Itô formula we get that  
\begin{align}\label{eq: dF_f}
\text{d}F_f\left(\mathfrak{e},\Zbar_t\right) &= \text{d}F\Big(X^{\mathfrak{e}}_t,\sum_{v \in \mathbb{G}(t)}f\left(X^v_t\right)\Big) \nonumber\\
& = \sum_{u \in \mathbb{G}(t)} \partial_{u}F\left(X^{\mathfrak{e}}_t,\langle\Z_t,f \rangle \right)a_u\left(\Zbar_t \right) \text{d}t + \sum_{j=1}^m\sum_{u \in \mathbb{G}(t)} \partial_{u}F\left(X^{\mathfrak{e}}_t,\langle\Z_t,f \rangle \right)\ssigma_{u,j}\left(\Zbar_t \right) \text{d}B^j_t \nonumber\\
& + \frac{1}{2} \sum_{u,w \in \mathbb{G}(t)} \partial^2_{u,w}F\left(X^{\mathfrak{e}}_t,\langle\Z_t,f \rangle \right)\left(\ssigma\ssigma^T\right)_{u,w}\left(\Zbar_t \right) \text{d}t.
\end{align}
To derive the generator we will need to compute the following terms. For all $u \neq \mathfrak{e}$
\begin{align}\label{eq: gradient F_f}
\partial_{u}F\left(X^{\mathfrak{e}}_t,\langle\Z_t,f \rangle \right)&= \partial_2F\left(X^{\mathfrak{e}}_t,\langle\Z_t,f \rangle \right)f'\left(X^u_t\right) \nonumber\\
\partial_{\mathfrak{e}}F\left(X^{\mathfrak{e}}_t,\langle\Z_t,f \rangle \right) &= \partial_1F\left(X^{\mathfrak{e}}_t,\langle\Z_t,f\rangle \right) + \partial_2F\left(X^{\mathfrak{e}}_t,\langle\Z_t,f \rangle \right)f'\left(X^{\mathfrak{e}}_t\right).
\end{align}
We deduce that
\begin{multline*}
    \sum_{u \in \mathbb{G}(t)} \partial_{u}F\left(X^{\mathfrak{e}}_t,\langle\Z_t,f \rangle \right)a_u\left(\Zbar_t \right) = \partial_1F\left(X^{\mathfrak{e}}_t,\langle\Z_t,f\rangle \right)a_{\mathfrak{e}}\left(\Zbar_t \right)  + \partial_2F\left(X^{\mathfrak{e}}_t,\langle\Z_t,f \rangle \right)\sum_{u \in \mathbb{G}(t)} f'\left(X^u_t\right)a_u\left(\Zbar_t \right).
\end{multline*}
Let us denote by $\mathring{\mathbb{G}}(t)$ the set of labels without the spine $ \mathbb{G}(t)\backslash\{E_t\}$. Then for $u \neq v \in \mathring{\mathbb{G}}(t)$, and when $E_t=\mathfrak{e}$, the second order derivatives write
\begin{align*}\label{eq: hess F_f}
\partial^2_{u,u}F\left(X^{\mathfrak{e}}_t,\langle\Z_t,f \rangle \right) &= \partial_2F\left(X^{\mathfrak{e}}_t,\langle\Z_t,f \rangle \right)f''\left(X^u_t\right) + \partial^2_{2,2}F\left(X^{\mathfrak{e}}_t,\langle\Z_t,f \rangle \right)\left(f'\left(X^u_t\right)\right)^2\nonumber\\
\partial^2_{u,v}F\left(X^{\mathfrak{e}}_t,\langle\Z_t,f \rangle \right) &= \partial^2_{2,2}F\left(X^{\mathfrak{e}}_t,\langle\Z_t,f \rangle \right)f'\left(X^u_t\right)f'\left(X^v_t\right) \nonumber\\
\partial^2_{u,e}F\left(X^{\mathfrak{e}}_t,\langle\Z_t,f \rangle \right) &= \partial^2_{2,2}F\left(X^{\mathfrak{e}}_t,\langle\Z_t,f \rangle \right)f'\left(X^u_t\right)f'\left(X^{\mathfrak{e}}_t\right) + \partial^2_{1,2}F\left(X^{\mathfrak{e}}_t,\langle\Z_t,f \rangle \right)f'\left(X^u_t\right)\nonumber\\
\partial^2_{e,e}F\left(X^{\mathfrak{e}}_t,\langle\Z_t,f \rangle \right) &= \partial_{2}F\left(X^{\mathfrak{e}}_t,\langle\Z_t,f \rangle \right)f''\left(X^{\mathfrak{e}}_t\right) + \partial^2_{2,2}F\left(X^{\mathfrak{e}}_t,\langle\Z_t,f \rangle \right)\left(f'\left(X^{\mathfrak{e}}_t\right)\right)^2 \nonumber\\
& + \partial^2_{1,1}F\left(X^{\mathfrak{e}}_t,\langle\Z_t,f \rangle \right) + 2\partial^2_{1,2}F\left(X^{\mathfrak{e}}_t,\langle\Z_t,f \rangle \right)f'\left(X^{\mathfrak{e}}_t\right).
\end{align*}
We will denote by $M$ the symmetrical matrix $\ssigma\ssigma^T$ and we omit the dependency in the functions when there is no ambiguity. We thus have 
\begin{align*}
    \frac{1}{2}\sum_{u,w \in \mathbb{G}(t)} \partial^2_{u,w}F M_{u,w} &= 
\frac{1}{2}\sum_{u,w \in \mathring{\mathbb{G}}(t)} \partial^2_{u,w}FM_{u,w} 
+ \sum_{u \in \mathring{\mathbb{G}}(t)} \partial^2_{u,\mathfrak{e}}FM_{u,\mathfrak{e}} + \frac{1}{2}\partial^2_{\mathfrak{e},\mathfrak{e}}FM_{\mathfrak{e},\mathfrak{e}}\\
&= \frac{1}{2}\partial^2_{2,2}F\sum_{u,w \in \mathring{\mathbb{G}}(t)} f'\left(X^u_t\right)f'\left(X^v_t\right)M_{u,w} + \frac{1}{2}\partial_{2}F\sum_{u \in \mathring{\mathbb{G}}(t)} f''\left(X^u_t\right)M_{u,\mathfrak{e}}\\
& \quad + \partial^2_{1,2}F\sum_{u \in \mathring{\mathbb{G}}(t)} f'\left(X^u_t\right)M_{u,\mathfrak{e}} + \partial^2_{2,2}F\sum_{u \in \mathring{\mathbb{G}}(t)} f'\left(X^u_t\right)f'\left(X^{\mathfrak{e}}_t\right)M_{u,\mathfrak{e}}\\
& \quad + \frac{1}{2}\partial_{2}Ff''\left(X^{\mathfrak{e}}_t\right)M_{\mathfrak{e},\mathfrak{e}} + \frac{1}{2} \partial^2_{2,2}f'\left(X^{\mathfrak{e}}_t\right)f'\left(X^{\mathfrak{e}}_t\right)M_{\mathfrak{e},\mathfrak{e}}.
\end{align*}
Using the definition of the infinitesimal generator and rearranging the terms conclude the proof.

\subsubsection{Proof of Proposition \ref{prop: generator}}
The equality for the jumps part of the generator is Proposition 2.6 in \cite{medous23}. For the continuous part $G$, we use \eqref{eq: dF_f} to get that 
\begin{align*}
G[\psi F]\left(\mathfrak{e},\Zbar\right) &= \psi\left(X^{\mathfrak{e}}_t,\langle\Z_t,f \rangle \right)\sum_{u \in \mathbb{G}(t)} \partial_{u}F\left(X^{\mathfrak{e}}_t,\langle\Z_t,f \rangle \right)a_u\left(\Zbar_t \right) \\
&+ F\left(X^{\mathfrak{e}}_t,\langle\Z_t,f \rangle \right)\sum_{u \in \mathbb{G}(t)} \partial_{u}\psi\left(X^{\mathfrak{e}}_t,\langle\Z_t,f \rangle \right)a_u\left(\Zbar_t \right)\\
& + \frac{\psi\left(X^{\mathfrak{e}}_t,\langle\Z_t,f \rangle \right)}{2} \sum_{u,w \in \mathbb{G}(t)} \partial^2_{u,w}F\left(X^{\mathfrak{e}}_t,\langle\Z_t,f \rangle \right)\left(\ssigma\ssigma^T\right)_{u,w}\left(\Zbar_t \right)\\
& + \frac{F\left(X^{\mathfrak{e}}_t,\langle\Z_t,f \rangle \right)}{2} \sum_{u,w \in \mathbb{G}(t)} \partial^2_{u,w}\psi\left(X^{\mathfrak{e}}_t,\langle\Z_t,f \rangle \right)\left(\ssigma\ssigma^T\right)_{u,w}\left(\Zbar_t \right)\\
& + \frac{1}{2} \sum_{u,w \in \mathbb{G}(t)} \left(\partial_{w}F \partial_u\psi + \partial_uF\partial_w\psi \right)\left(X^{\mathfrak{e}}_t,\langle\Z_t,f \rangle \right) \left(\ssigma\ssigma^T\right)_{u,w}\left(\Zbar_t \right).
\end{align*}
Using the symmetry of $\ssigma\ssigma^T$ we get that
\begin{multline*}
    \frac{G[\psi F]}{\psi}\left(\mathfrak{e},\Zbar\right) - \frac{G\psi }{\psi}F\left(\mathfrak{e},\Zbar\right) = GF\left(\mathfrak{e},\Zbar\right) \\ + \frac{1}{\psi\left(X^{\mathfrak{e}}_t,\langle\Z_t,f \rangle \right)}\sum_{u,w \in \mathbb{G}(t)} \partial_u\psi\left(X^{\mathfrak{e}}_t,\langle\Z_t,f \rangle \right) \left(\ssigma\ssigma^T\right)_{u,w}\left(\Zbar_t \right)\partial_{w}F\left(X^{\mathfrak{e}}_t,\langle\Z_t,f \rangle \right).
\end{multline*}
Notice that the last line can also be expressed in matricial form as
$$
\Big(\frac{ \ssigma\ssigma^T\nabla\psi}{\psi}\Big)^T \nabla F.
$$
In the following we omit the dependencies in the functions to shorten the expressions. We slice the sum by distinguishing the label $\e$, and use the first order derivative expressions \eqref{eq: gradient F_f} to get
\begin{align*}
    \left(\ssigma\ssigma^T\nabla\psi \right)^T \nabla F &= \partial_2F\partial_2\psi \int_{\mathcal{X}\times\mathcal{U}}\int_{\mathcal{X}\times\mathcal{U}}f'(x)\left(\ssigma\ssigma^T\right)_{u,v}\phi'(y) \Zbar(\text{d}y,\text{d}v) \Zbar(\text{d}x,\text{d}u)\\
    &+ \partial_1F\partial_1\psi\left(\ssigma\ssigma^T\right)_{\e,\e} + \partial_2F\partial_1\psi \int_{\mathcal{X}\times\mathcal{U}}f'(x)\left(\ssigma\ssigma^T\right)_{u,\e} \Zbar(\text{d}x,\text{d}u) \\
    &+ \partial_1F\partial_2\psi \int_{\mathcal{X}\times\mathcal{U}}\left(\ssigma\ssigma^T\right)_{\e,u}\phi'(x) \Zbar(\text{d}x,\text{d}u).
\end{align*}
We end the proof by summing this last equation and expression \eqref{eq: generator} of the generator $G$.
\subsubsection{Proof of Theorem \ref{thm: sampling}}\label{proof pdmc}
We obtain Theorem \ref{thm: sampling} with a modification from the proof of Theorem 2.2 in \cite{medous23}, to take into account the diffusive trait dynamics between jumps.

We denote $(U_k, k\geq 0)$ the sequence of $\mathcal{U}$-valued random variables giving the label of the branching individuals at the jumps times $(T_k, k \geq 0)$. Let $(N_k, k\geq 0)$ be the sequence of $\mathbb{N}$-valued random variables giving the number of children at each branching event and we denote  for brevity $A_k := (U_k,N_k)$ for all $k\geq 0$. At the $k$-th branching time $T_k$, we denote $\mathcal{Y}_k$ the $\mathcal{X}^{N_k}$-valued random variable giving the vector of offspring traits. Finally we introduce for all $k \geq 0$, $\mathcal{V}_k:=\left(T_k,A_k,\mathcal{Y}_k\right)$. We similarly define  $((\widehat{T}_k,\widehat{U}_k,\widehat{N}_k,\widehat{\mathcal{Y}}_k), k \geq 0)$, the sequence of jumps times, labels of the branching individual, number of children and trait of these children at birth in the spinal construction. Notice that the distribution of number of children and traits depend on whether the branching individual is the spinal one or not. We will also use for all $k \geq 0$, $\widehat{\mathcal{V}}_k:=(\widehat{T}_k,\widehat{A}_k,\widehat{\mathcal{Y}}_k)$ where $\widehat{A}_k := (\widehat{U}_k,\widehat{N}_k)$. At time $s \in [\widehat{T}_{k-1},\widehat{T}_k)$, the label of the spinal individual is denoted $E_k$ and its trait $\widehat{Y}_k$. For a given initial population $\bar{z}= \sum_{i=1}^{n}\delta_{(i,x^i)} \in \Zbarset$, we use by convention $U_0=\emptyset, \ N_0=n, \ \mathcal{Y}_0=(x^i, 1 \leq i \leq n)$ almost surely. The same convention holds for the spine process. For all $0 \leq k $, we introduce the associated filtrations 
$$
\mathcal{F}_k = \sigma\left(\mathcal{V}_i, 0\leq i\leq k  \right),\ \textrm{and } \ 
\widehat{\mathcal{F}}_k = \sigma\left(E_k, \left(\widehat{\mathcal{V}}_i, 0\leq i\leq k\right)  \right).
$$
The core of the proof of Theorem 2.2 in \cite{medous23}, lies in the following Girsanov-type result 
\begin{lemma}[Lemma 1.5.1 in \cite{medous23}]\label{lemma: medous23}
$\left. \right.$\\
We denote by $\widehat{\Z}$ the spine process.
For any $k >0$, $\bar{z} \in \Zbarset$ and all $a = ((u_i,n_i), 0 \leq i \leq k)$, let $F$ be a measurable function on $\Pi_{i=1}^k\left(\mathbb{R}_+ \times \mathcal{U}\times \mathbb{N} \times \mathcal{X}^{n_i}\right)$. For any $\mathfrak{e} \in \widehat{\mathbb{G}}(\widehat{T}_k)$,
\begin{multline*}
\mathbb{E}_{\bar{z}}  \Big[F\left(\mathcal{V}_i, 0 \leq i \leq k\right)\prod_{i=0}^{k}\mathbbm{1}_{\{A_i = a_i\}}\Big]= \left\langle z, \psi\left(\cdot,z,0\right) \right\rangle\\
\times \mathbb{E}_{\bar{z}} \Big[
\xi_k\left(E_k,\left(\widehat{\mathcal{V}}_i, 0 \leq i \leq k\right) \right) 
F\left(\widehat{\mathcal{V}}_i, 0 \leq i\leq k\right) \mathbbm{1}_{\{E_{k} = e\}} \prod_{i=0}^{k}\mathbbm{1}_{\{\widehat{A}_i = a_i\}} \Big],
\end{multline*}
\begin{equation*}
\text{where} \quad \xi_k\left(E_k,\left(\widehat{\mathcal{V}}_i, 0 \leq i \leq k\right) \right) := \frac{1}{\psi\left(\widehat{Y}_{\widehat{T}_k},\widehat{\Z}_{\widehat{T}_k},\widehat{T}_k\right)}\exp\left(\int_{0}^{\widehat{T}_{k}}\frac{\mathcal{G}\psi\left(\widehat{Y}_s,\widehat{\Z}_s,s\right)}{\psi\left(\widehat{Y}_s,\widehat{\Z}_s,s\right)}\textrm{d}s\right).
\end{equation*}
\end{lemma}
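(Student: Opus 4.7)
The plan is to prove the lemma by induction on $k$, upgrading the corresponding argument of Lemma~1.5.1 in \cite{medous23} by inserting a Girsanov change of measure on each inter-jump interval to account for the non-trivial diffusive trait dynamics \eqref{eq: eds trait}. For the base case $k=0$, the variables $\mathcal{V}_0$ are deterministic given $\bar z$, so it is enough to verify the identity against the initial spine distribution: the spine is picked with probability $\psi(x^e,z,0)/\langle z,\psi(\cdot,z,0)\rangle$ and $\xi_0=1/\psi(\widehat Y_0,\widehat\Z_0,0)=1/\psi(x^e,z,0)$, so the $\psi(x^e,z,0)$ factors cancel against the prefactor $\langle z,\psi(\cdot,z,0)\rangle$ and the identity collapses to a tautology.

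For the induction step, I would condition on $\mathcal{F}_{k-1}$ (respectively $\widehat{\mathcal{F}}_{k-1}$) and compute the conditional joint law of $\mathcal{V}_k = (T_k,U_k,N_k,\mathcal{Y}_k)$ in each process. Two independent change-of-measure ingredients must be combined on the interval $[T_{k-1},T_k)$. \textbf{(a)} Between jumps, the spinal traits satisfy the SDE of Proposition \ref{prop: generator} with the additional drift $\ssigma\ssigma^T\nabla\psi/\psi$ identified in \eqref{eq: added drift}. Girsanov's theorem, whose applicability is exactly what Assumption \ref{ass: novikov} is designed to control, gives the stochastic exponential
\[
\exp\!\Big(\!\int_{T_{k-1}}^{T_k}\!(\nabla\psi/\psi)^{T}\ssigma\,\mathrm{d}B_s-\tfrac12\!\int_{T_{k-1}}^{T_k}\!(\nabla\psi/\psi)^{T}\ssigma\ssigma^{T}(\nabla\psi/\psi)\,\mathrm{d}s\Big).
\]
Applying It\^o's formula to $\log\psi(\widehat Y_s,\widehat\Z_s,s)$ along the spinal diffusion rewrites this exponential as
\[
\frac{\psi(\widehat Y_{T_k^-},\widehat\Z_{T_k^-},T_k)}{\psi(\widehat Y_{T_{k-1}},\widehat\Z_{T_{k-1}},T_{k-1})}\exp\!\Big(-\!\int_{T_{k-1}}^{T_k}\!\frac{G\psi}{\psi}\,\mathrm{d}s\Big),
\]
which is exactly the ``continuous part'' of a Feynman--Kac representation. \textbf{(b)} At the jump $T_k$, the spinal intensities and mark laws are reweighted by $\psi$-ratios read directly from the generator in Proposition \ref{prop: generator}: spine branching carries a factor $\sum_i \psi(y^i,\Z^+(x_\e,\boldsymbol y))/\psi(x_\e,\Z)$ against $K_n(x_\e,\Z,\mathrm{d}\boldsymbol y)$, with the new spine chosen among the children in proportion to $\psi(y^i,\Z^+(x_\e,\boldsymbol y))$, while a non-spine branching of individual $u\neq\mathfrak e$ is biased by $\psi(x_\e,\Z^+(x_u,\boldsymbol y))/\psi(x_\e,\Z)$.

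Taking the ratio of the conditional jump-time/mark densities in $(b)$, I would check that the remaining factors are precisely the jump contribution of $\mathcal{G}\psi/\psi - G\psi/\psi$ read off from \eqref{def: mathcal G}. Multiplying by the exponential in $(a)$, the two $G\psi/\psi$ and jump contributions combine into $\exp(\int_{T_{k-1}}^{T_k}\mathcal{G}\psi/\psi\,\mathrm{d}s)$, while the boundary $\psi$-terms telescope: the $\psi(T_{k-1})$ in the denominator from $(a)$ cancels the $1/\psi(T_{k-1})$ produced by the inductive hypothesis $\xi_{k-1}$, leaving $1/\psi(\widehat Y_{\widehat T_k},\widehat\Z_{\widehat T_k},\widehat T_k)$ in front. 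The indicators $\mathbbm{1}_{\{A_k=a_k\}}$ and $\mathbbm{1}_{\{E_k=e\}}$ on both sides match by construction of the spinal jump measure, completing the induction.

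The main obstacle will be the jump-rate bookkeeping in step $(b)$: the spinal branching law depends on whether the branching individual is the spine or not, and in the spine case on which child inherits the distinguished label, so one must carefully expand \eqref{def: mathcal G} and pair each term with the corresponding contribution in Proposition \ref{prop: generator} before recognising the Feynman--Kac exponential. A secondary technical point is justifying Girsanov on the random interval $[T_{k-1},T_k)$ rather than on a deterministic one; here the locally bounded nature of $\mathcal{G}\psi/\psi$ on $\Zbarhatset$ noted after \eqref{def: mathcal G}, together with Assumption \ref{ass: novikov} (or, as remarked after it, a path-integral Novikov condition on the spine), provides the required exponential integrability after a standard localisation at a sequence of stopping times truncating the population size.
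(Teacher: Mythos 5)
Your proposal follows essentially the same route as the paper: induction on the jump times as in \cite{medous23}, with the jump bookkeeping imported from there and the inter-jump contribution handled by a Girsanov change of measure (justified via Assumption \ref{ass: novikov} and Novikov's condition) together with an application of It\^o's formula to $\log \psi_{\phi}$ that identifies the stochastic exponential with a ratio of $\psi_{\phi}$-values times $\exp\{\int G\psi_{\phi}/\psi_{\phi}\,\mathrm{d}s\}$ --- exactly the substitute the paper constructs for Lemma~1.5.2 of \cite{medous23}. The only slip is the orientation of the density in your step (a): the factor to be inserted inside the \emph{spinal} expectation is $\mathrm{d}\mathbb{P}/\mathrm{d}\widehat{\mathbb{P}} = \psi_{\phi}(\widehat{T}_{k-1})\,\psi_{\phi}(\widehat{T}_k^-)^{-1}\exp\{+\int_{\widehat{T}_{k-1}}^{\widehat{T}_k} G\psi_{\phi}/\psi_{\phi}\,\mathrm{d}s\}$, the reciprocal of the exponential you wrote, so that $\psi_{\phi}(\widehat{T}_{k-1})$ appears in the numerator and genuinely cancels the $1/\psi_{\phi}(\widehat{T}_{k-1})$ carried by $\xi_{k-1}$.
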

We will prove this Lemma in our extended framework- with diffusive trait dynamics- and the proof of Theorem \ref{thm: sampling} follows the same proof as in \cite{medous23}.
Lemma \ref{lemma: medous23} will be proved by induction on the number of branching events $k$ as in \cite{medous23}, and the core Equation (1.5.10) is shown using Lemma 1.5.2 in \cite{medous23} related to the fundamental theorem of calculus.
However Lemma 1.5.2 in \cite{medous23} does not hold anymore in this framework because of the stochastic dynamics of the traits between branching events. We derive an equivalent result that allows us to conclude the proof of Lemma \ref{lemma: medous23} in the same way. We recall that the set $\mathcal{D}$, defined in \eqref{def: D space}, is composed of twice continuously differentiable and continuous functions $F_f$ such that for all $\left(\mathfrak{e},\Zbar\right) \in \Zbarhatset$,  $F_f\left(\mathfrak{e},\Zbar\right) := F(x_\mathfrak{e},\langle\Z,f\rangle)$.
Without loss of generality on the set $\mathcal{D}$, to simplify the notations and for more coherence between our notations and those in \cite{medous23}, we assume that $\psi_{\phi}$ is a function of the trait of the spine and the marginal population and will denote without distinction 
$$\psi_{\phi}\left(\mathfrak{e},\overline{\widehat{\Z}}_t\right) = \psi_{\phi}\left(\widehat{Y}_t,\widehat{\Z}_t\right).$$
\begin{lemma}
If Assumption \ref{ass: novikov} holds, between two successive jumps, for all $t \in [\widehat{T}_{k-1},\widehat{T}_{k})$, for every measurable function H on the set of trajectories on $[\widehat{T}_{k-1},\widehat{T}_{k})$, we get that
\begin{multline*}
    \mathbb{E}\Big[H(\nu_s, \widehat{T}_{k-1} \leq s\leq t )\Big\vert \widehat{\mathcal{F}}_{k}, \{\nu_{ \widehat{T}_{k-1}} = \widehat{\nu}_{\widehat{T}_{k-1}}\}\Big] = \\
    \mathbb{E}\Bigg[\frac{\psi_{\phi}(\widehat{Y}_{\widehat{T}_{k-1}},\widehat{\nu}_{\widehat{T}_{k-1}})}{\psi_{\phi}(\widehat{Y}_t,\widehat{\nu}_t)}\exp\left\{\int_{\widehat{T}_{k-1}}^t\frac{G\psi_{\phi}}{\psi_{\phi}}(\widehat{Y}_s,\widehat{\nu}_s)\text{d}s\right\}H\left(\widehat{\nu}_s,\widehat{T}_{k-1} \leq s\leq t \right)\Big\vert \widehat{\mathcal{F}}_{k} \Bigg].
\end{multline*}
\end{lemma}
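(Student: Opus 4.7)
The lemma is a Girsanov-type identity comparing two diffusions on the interval $[\widehat T_{k-1},t]$. Conditioning on $\widehat{\mathcal F}_k$ and on $\{\nu_{\widehat T_{k-1}}=\widehat\nu_{\widehat T_{k-1}}\}$ freezes both the starting configuration and the interval endpoints, and between two successive branching events no jumps occur. By \eqref{eq: added drift}, the original trait SDE \eqref{eq: eds trait} and the spinal trait SDE share the same diffusion matrix $\ssigma$ and differ only by the added drift $\ssigma\ssigma^{\top}\nabla\psi_\phi/\psi_\phi$. Girsanov's theorem is therefore the natural tool, and the remaining work is to identify the Girsanov exponential with the weight in the statement by an Itô computation on $\log\psi_\phi$.

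\textbf{Step 1 (Girsanov change of measure).} Set
\[
\theta_s:=\frac{\ssigma^{\top}\nabla\psi_\phi}{\psi_\phi}(\widehat Y_s,\widehat{\nu}_s),\qquad Z_t:=\exp\Big\{-\int_{\widehat T_{k-1}}^{t}\theta_s^{\top}\,\text{d}\widehat{\boldsymbol B}_s-\tfrac12\int_{\widehat T_{k-1}}^{t}\|\theta_s\|^2\,\text{d}s\Big\}.
\]
A direct expansion shows that the left-hand side of Assumption \ref{ass: novikov} is exactly $\nabla\psi_\phi^{\top}\ssigma\ssigma^{\top}\nabla\psi_\phi$, so the assumption reads $\psi_\phi\|\theta\|^2\leq M$. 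Combined with a localization at stopping times on which $\psi_\phi$ stays away from $0$ and $\infty$, this yields Novikov's condition and makes $Z$ a true martingale on $[\widehat T_{k-1},\widehat T_k)$. Under $\text{d}\mathbb Q:=Z_t\,\text{d}\widehat{\mathbb P}$, the shifted process $\widehat{\boldsymbol B}_s+\int_{\widehat T_{k-1}}^{s}\theta_u\,\text{d}u$ is a Brownian motion, and the spinal SDE reduces to the original SDE. Consequently, $(\widehat\nu_s)$ under $\mathbb Q$ has the same law as $(\nu_s)$ under $\mathbb P$, both started from $\widehat\nu_{\widehat T_{k-1}}$, yielding
\[
\mathbb E\!\left[H(\nu_s,\widehat T_{k-1}\!\leq\!s\!\leq\!t)\,\big|\,\widehat{\mathcal F}_k,\nu_{\widehat T_{k-1}}\!=\!\widehat\nu_{\widehat T_{k-1}}\right]=\mathbb E\!\left[Z_t\,H(\widehat\nu_s,\widehat T_{k-1}\!\leq\!s\!\leq\!t)\,\big|\,\widehat{\mathcal F}_k\right].
\]

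\textbf{Step 2 (Identification of $Z_t$).} Apply Itô's formula to $\log\psi_\phi(\widehat Y_s,\widehat\nu_s)$ along the spinal SDE. From $\text{d}\psi_\phi=\widehat G\psi_\phi\,\text{d}s+(\nabla\psi_\phi)^{\top}\ssigma\,\text{d}\widehat{\boldsymbol B}$ and the relation $\widehat G\psi_\phi=G\psi_\phi+\|\ssigma^{\top}\nabla\psi_\phi\|^2/\psi_\phi$ read off \eqref{eq: added drift}, together with the Itô correction $-\tfrac12\|\ssigma^{\top}\nabla\psi_\phi\|^2/\psi_\phi^2$, one finds
\[
\text{d}\log\psi_\phi=\frac{G\psi_\phi}{\psi_\phi}\,\text{d}s+\tfrac12\|\theta_s\|^2\,\text{d}s+\theta_s^{\top}\,\text{d}\widehat{\boldsymbol B}_s.
\]
Integrating over $[\widehat T_{k-1},t]$ and exponentiating yields
\[
Z_t=\frac{\psi_\phi(\widehat Y_{\widehat T_{k-1}},\widehat\nu_{\widehat T_{k-1}})}{\psi_\phi(\widehat Y_t,\widehat\nu_t)}\exp\Big\{\int_{\widehat T_{k-1}}^{t}\frac{G\psi_\phi}{\psi_\phi}(\widehat Y_s,\widehat\nu_s)\,\text{d}s\Big\},
\]
which is exactly the weight in the lemma. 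Substituting into the identity of Step 1 concludes the proof.

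\textbf{Main difficulty.} The delicate point is the true-martingale property of $Z$: the interval $[\widehat T_{k-1},\widehat T_k)$ is random and $\psi_\phi$ may vanish or blow up along the spinal trajectory. The planned remedy is to localize at stopping times $\tau_n\uparrow\widehat T_k$ on which $\psi_\phi$ is bounded away from $0$ and $\infty$, run the Girsanov and Itô arguments on each localized interval, and then pass to the limit using Assumption \ref{ass: novikov} to control $\mathbb E[\exp(\tfrac12\int\|\theta_s\|^2\,\text{d}s)]$ uniformly in $n$. Everything else is routine Itô calculus, and the conditioning on $\{\nu_{\widehat T_{k-1}}=\widehat\nu_{\widehat T_{k-1}}\}$ is automatic since both diffusions start from the same configuration.
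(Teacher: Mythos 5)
Your proposal is correct and follows essentially the same route as the paper: a Girsanov change of measure removing the added drift $\ssigma\ssigma^{T}\nabla\psi_{\phi}/\psi_{\phi}$ (with Novikov's condition supplied by Assumption \ref{ass: novikov}, whose left-hand side is indeed $\nabla\psi_{\phi}^{T}\ssigma\ssigma^{T}\nabla\psi_{\phi}$), followed by an It\^o computation on $\ln\psi_{\phi}$ identifying the stochastic exponential with the weight $\psi_{\phi}(\widehat{Y}_{\widehat{T}_{k-1}},\widehat{\nu}_{\widehat{T}_{k-1}})\psi_{\phi}(\widehat{Y}_t,\widehat{\nu}_t)^{-1}\exp\{\int_{\widehat{T}_{k-1}}^{t}G\psi_{\phi}/\psi_{\phi}\}$. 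Your added localization remark and your explicit orientation of the Radon--Nikodym derivative are if anything slightly more careful than the paper's treatment, but the substance is identical.
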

Notice that we did not consider time inhomogeneities in this framework as it would have complicated the notations, and our motivating example does not depend on time. Extending the results to such a non-homogeneous context only necessitates adding a time derivative in the expression of $G$ and in the generator of the spine process, and the rest of the proof still holds.
\begin{proof}
Let $k\geq 1$ and $t \in [\widehat{T}_{k-1},\widehat{T}_{k})$.
Using the expression of the generator $\widehat{G}$ of the trait dynamics between branching events in the spine process, introduced in \eqref{eq: added drift}, we got that the vector $\widehat{\boldsymbol{X}}_t = (\widehat{X}^u_t, u \in \widehat{\G}(t))$ containing the vector of the individuals in the spine process satisfies, for a spine of label $\mathfrak{e}$
\begin{equation}\label{eq: sde spinal}
\text{d} \widehat{\boldsymbol{X}}_t = \Big(\boldsymbol{a}\Big(\overline{\widehat{\Z}}_t\Big) + \frac{\ssigma\ssigma^T\nabla\psi_{\phi}}{\psi_{\phi}}\Big(\mathfrak{e},\overline{\widehat{\Z}}_t\Big) \Big)\text{d} t + \ssigma \left(\overline{\widehat{\Z}}_t\right) \text{d}\boldsymbol{\widehat{B}}_t,    
\end{equation}
where the vector $\boldsymbol{\widehat{B}}_t := (\widehat{B}^u_t, u \in \mathbb{G}(\widehat{\nu}_t) )$ is a Wiener process with i.i.d components under the law $\widehat{\mathbb{P}}$ of the auxiliary process.
We introduce $\left(\ga_s, s\geq 0\right)$ the $\widehat{\mathcal{F}}_t$-adapted process via:
$$
\ga_s := -\frac{\ssigma^T\nabla\psi_{\phi}}{\psi_{\phi}}\left(\mathfrak{e},\overline{\widehat{\Z}}_t\right).
$$
The SDE \eqref{eq: sde spinal} can be written:
\begin{equation}\label{eq: sde spinal 1}
\text{d}\widehat{\boldsymbol{X}}_t = \boldsymbol{a}\left(\overline{\widehat{\Z}}_t\right) \text{d}t + \ssigma \left(\overline{\widehat{\Z}}_t\right) \text{d}\Big(\boldsymbol{\widehat{B}}_t - \int_{\widehat{T}_{k-1}}^t\ga_s\text{d}s\Big).
\end{equation}
Let $\mathbb{P}_{k-1}$ and $\widehat{\mathbb{P}}_{k-1}$ denote respectively the laws of the original and the spinal branching processes conditioned on the filtration $\widehat{\mathcal{F}}_{k-1}$. 
The Girsanov-Cameron-Martin theorem (see \cite[p.72]{Yor09}), applied to \eqref{eq: sde spinal 1} ensures that if the local $\widehat{\mathcal{F}}_t$-martingale $\widehat{L}$ defined by 
\begin{equation}\label{eq: L_t}
    \widehat{L}_t := \exp\Big\{\int_{\widehat{T}_{k-1}}^t\ga_s\cdot\text{d}\boldsymbol{\widehat{B}}_s -\frac{1}{2}\int_{\widehat{T}_{k-1}}^t\Vert  \ga_s \Vert_2^2\text{d}s\Big\}
\end{equation}
is a martingale, thus $(\boldsymbol{\widehat{B}}_t - \int_{\widehat{T}_{k-1}}^t\ga_s\text{d}s)$ is a Wiener process under the law $\mathbb{P}_{k-1}$ of the original branching process, and $\widehat{L}$ is the Radon-Nykodym derivative such that
\begin{equation}\label{eq: r_n derivative}
\frac{\text{d}\widehat{\mathbb{P}}_{k-1}}{\text{d}\mathbb{P}_{k-1}}\Big|_{\widehat{\mathcal{F}}_{t}} =\widehat{L}_t.
\end{equation}
Note that a classical sufficient condition for $L$ to be a martingale is Novikov's condition:
\begin{equation}\label{novikov}
\mathbb{E}\Big[\exp\Big\{\frac{1}{2}\int_{\widehat{T}_{k-1}}^t\Vert  \ga_s \Vert_2^2\text{d}s \Big\} \Big] < \infty.
\end{equation}
Assumption \ref{ass: novikov} ensures that the process $\ga$ is almost surely bounded, and that \eqref{novikov} holds. However this sufficient condition is not necessary to ensure this finite moment hypothesis, and a more detailed analysis on the process dynamics might give a less restrictive set of hypotheses. 
Now we will establish that the martingale $\widehat{L}$ satisfies:
\begin{equation}\label{eq: L egal exp}
    \widehat{L}_t = \frac{\psi_{\phi}(\widehat{Y}_{\widehat{T}_{k-1}},\widehat{\nu}_{\widehat{T}_{k-1}})}{\psi_{\phi}(\widehat{Y}_t,\widehat{\nu}_t)}\exp\left\{\int_{\widehat{T}_{k-1}}^t\frac{G\psi_{\phi}}{\psi_{\phi}}(\widehat{Y}_s,\widehat{\nu}_s)\text{d}s\right\}.
\end{equation}
Using the vectorial formalism introduced earlier and applying Ito's formula we get that 
$$
\ln \psi_{\phi}\left(\mathfrak{e}, \overline{\widehat{\Z}}_t\right) = \ln \psi_{\phi}\left(\mathfrak{e}, \overline{\widehat{\Z}}_{\widehat{T}_{k-1}}\right) + \int_{\widehat{T}_{k-1}}^t\widehat{G}\ln\psi_{\phi}\left(\mathfrak{e}, \overline{\widehat{\Z}}_s\right)\text{d}s + \int_{\widehat{T}_{k-1}}^t\nabla \ln\psi_{\phi}^T\left(\mathfrak{e}, \overline{\widehat{\Z}}_s\right)\ssigma\left(\overline{\widehat{\Z}}_s\right)\text{d}\widehat{\boldsymbol{B}}_s
$$
Using the expression of $\widehat{G}$ introduced in Proposition \ref{prop: generator} we get that
\begin{align*}
\ln \psi_{\phi}\left(\mathfrak{e}, \overline{\widehat{\Z}}_t\right) &= \ln \psi_{\phi}\left(\mathfrak{e}, \overline{\widehat{\Z}}_{\widehat{T}_{k-1}}\right) + \int_{\widehat{T}_{k-1}}^tG\ln\psi_{\phi}\left(\mathfrak{e}, \overline{\widehat{\Z}}_s\right)\text{d}s \\
&\hspace{1.8cm} + \int_{\widehat{T}_{k-1}}^t\frac{\nabla\psi_{\phi}^T}{\psi_{\phi}} \ssigma\ssigma^T\frac{\nabla\psi_{\phi}}{\psi_{\phi}} \left(\mathfrak{e}, \overline{\widehat{\Z}}_s\right)\text{d}s + \int_{\widehat{T}_{k-1}}^t\frac{\nabla\psi_{\phi}^T}{\psi_{\phi}}\left(\mathfrak{e}, \overline{\widehat{\Z}}_s\right)\ssigma\left(\overline{\widehat{\Z}}_s\right)\text{d}\widehat{\boldsymbol{B}}_s\\
&=\ln \psi_{\phi}\left(\mathfrak{e}, \overline{\widehat{\Z}}_{\widehat{T}_{k-1}}\right) + \int_{\widehat{T}_{k-1}}^tG\ln\psi_{\phi}\left(\mathfrak{e}, \overline{\widehat{\Z}}_s\right)\text{d}s + \int_{\widehat{T}_{k-1}}^t\Vert  \ga_s \Vert_2^2\text{d}s - \int_{\widehat{T}_{k-1}}^t\ga_s\text{d}\widehat{\boldsymbol{B}}_s.
\end{align*}
Thus taking the exponential in the previous equation, and using \eqref{eq: L_t}, we get that
\begin{equation*}
    \widehat{L}_t = \frac{\psi_{\phi}(\mathfrak{e}, \overline{\widehat{\Z}}_{\widehat{T}_{k-1}})}{\psi_{\phi}(\mathfrak{e}, \overline{\widehat{\Z}}_t)}\exp\Big\{\int_{\widehat{T}_{k-1}}^tG\ln\psi_{\phi}(\mathfrak{e}, \overline{\widehat{\Z}}_s) + \frac{1}{2}\Vert  \ga_s \Vert_2^2\text{d}s\Big\}.
\end{equation*}
We now detail the expression of the generator $G$ applied to the function $\ln\psi_{\phi}$. Using \eqref{eq: generator} to the function $F_f = \ln\circ\psi_{\phi}$, we have 
\begin{align*}
&G\ln\psi_{\phi}\left(\mathfrak{e},\Zbar\right) = \frac{\partial_1\psi}{\psi_{\phi}}\left(x_{\mathfrak{e}},\langle \nu,\phi \rangle \right)\boldsymbol{a}_{\mathfrak{e}}\left(\Zbar\right)+\frac{1}{2}\Big[\frac{\partial^2_{1,1}\psi}{\psi_{\phi}} - \Big(\frac{\partial_{1}\psi}{\psi_{\phi}}\Big)^2\Big]\left(x_{\mathfrak{e}},\langle \nu,\phi \rangle \right)\left(\ssigma\ssigma^T\right)_{\mathfrak{e},\mathfrak{e}}\left(\Zbar\right) \nonumber\\
& \hspace{1cm}+ \Big[\frac{\partial^2_{1,2}\psi}{\psi_{\phi}} - \frac{\partial_{1}\psi\partial_{2}\psi}{\psi_{\phi}^2}\Big]\left(x_{\mathfrak{e}},\langle \nu,\phi \rangle \right) \int_{\mathcal{X}\times\mathcal{U}} \left(\ssigma\ssigma^T\right)_{u,\mathfrak{e}}\left(\Zbar\right)\phi'\left(x\right) \Zbar(\text{d}x,\text{d}u)\nonumber\\
&\hspace{1cm}+ \frac{\partial_2\psi}{\psi_{\phi}}\left(x_{\mathfrak{e}},\langle \nu,\phi \rangle \right) \int_{\mathcal{X}\times\mathcal{U}}\boldsymbol{a}_u\left(\Zbar\right)\phi'\left(x\right) + \frac{\left(\ssigma\ssigma^T\right)_{u,u}}{2}\left(\Zbar\right)\phi''\left(x\right)\Zbar(\text{d}x,\text{d}u)\nonumber\\
&\hspace{1cm}+\frac{1}{2}\Big[\frac{\partial^2_{2,2}\psi}{\psi_{\phi}} - \left(\frac{\partial_{2}\psi}{\psi_{\phi}}\right)^2\Big]\left(x_{\mathfrak{e}},\langle \nu,\phi \rangle \right)\times\int_{\mathcal{X}\times\mathcal{U}}\int_{\mathcal{X}\times\mathcal{U}}\left(\ssigma\ssigma^T\right)_{u,v}\left(\Zbar\right)\phi'\left(x\right)\phi'\left(y\right)\Zbar(\text{d}x,\text{d}u)\Zbar(\text{d}y,\text{d}v).
\end{align*}    
First we recognize the expression of $G\psi_{\phi}$ in this expression
\begin{align*}
&G\ln\psi_{\phi}\left(\mathfrak{e},\Zbar\right) = \frac{G\psi_{\phi}}{\psi_{\phi}}\Big(\mathfrak{e},\Zbar\Big)
-\frac{1}{2}\left(\frac{\partial_{1}\psi}{\psi_{\phi}}\right)^2\left(x_{\mathfrak{e}},\langle \nu,\phi \rangle \right)\left(\ssigma\ssigma^T\right)_{\mathfrak{e},\mathfrak{e}}\left(\Zbar\right) \nonumber\\
&\hspace{1cm}- \frac{\partial_{1}\psi\partial_{2}\psi}{\psi_{\phi}^2}\left(x_{\mathfrak{e}},\langle \nu,\phi \rangle \right) \int_{\mathcal{X}\times\mathcal{U}} \left(\ssigma\ssigma^T\right)_{u,\mathfrak{e}}\left(\Zbar\right)\phi'\left(x\right) \Zbar(\text{d}x,\text{d}u)\nonumber\\
&\hspace{1cm}-\frac{1}{2} \Big(\frac{\partial_{2}\psi}{\psi_{\phi}}\Big)^2\left(x_{\mathfrak{e}},\langle \nu,\phi \rangle \right)\times\int_{\mathcal{X}\times\mathcal{U}}\int_{\mathcal{X}\times\mathcal{U}}\left(\ssigma\ssigma^T\right)_{u,v}\left(\Zbar\right)\phi'\left(x\right)\phi'\left(y\right)\Zbar(\text{d}x,\text{d}u)\Zbar(\text{d}y,\text{d}v),
\end{align*} 
then we recognize the expression of $\nabla\psi_{\phi}^T \ssigma\ssigma^T\nabla\psi_{\phi}$:
\begin{align*}
G\ln\psi_{\phi}\left(\mathfrak{e},\Zbar\right) &= \frac{G\psi_{\phi}}{\psi_{\phi}}\left(\mathfrak{e},\Zbar\right)
-\frac{1}{2}\frac{1}{\psi_{\phi}^2}\nabla\psi_{\phi}^T \ssigma\ssigma^T\nabla\psi_{\phi}\left(\mathfrak{e},\Zbar\right).
\end{align*} 
Then, using the definition of $\boldsymbol{\gamma}$ and applying again Ito's formula on the previous equation yield
$$
G\psi_{\phi}\left(\mathfrak{e}, \overline{\widehat{\Z}}_t\right) = G\ln\psi_{\phi}\left(\mathfrak{e}, \overline{\widehat{\Z}}_t\right) + \frac{1}{2}\Vert  \ga_t \Vert_2^2.
$$
This last equation gives \eqref{eq: L egal exp}, and the change of measure \eqref{eq: r_n derivative} concludes the proof.
\end{proof}

\subsection{Proofs of Section \ref{section: example}}\label{section: proof of section 2}
This section is devoted to the proofs concerning the colonial population model at the heart of the article.
\subsubsection{Proof of Theorem \ref{thm: sampling colony}}
We introduce the sets $E_0:=\mathcal{U}\times\mathbb{R}_+$ and $E_2:=\mathcal{U}\times\mathbb{R}_+\times[0,1]$. 
We introduce the Poisson Point measures $Q_2$ and $Q_0$ respectively on the sets $[0,t] \times E_2$ and $[0,t] \times E_0$ and of respective intensity measures $\text{d}sn(\text{d}u)\text{d}rq(\text{d}\theta)$ and $\text{d}sn(\text{d}u)\text{d}r$, where $n(\text{d}u)$ denotes the counting measure on $\mathcal{U}$ and $q$ is the probability measure giving the sharing of resources at a spliting event.
The measure-valued process $(\nu_t, t\geq 0)$ following the branching mechanisms previously introduced and where the colonies traits follow the SDE \eqref{eq: eds indiv}, is a weak-solution of the following SDE, for all function $F \in \mathcal{C}^2(\mathbb{R}_+,\mathbb{R})$:
\begin{multline}\label{eq: eds pop}
\langle \nu_t,F\rangle = \langle \nu_0,F \rangle + M_F(t) + \int_0^T\int_{\mathbb{R}_+}\left(axF'(x)+ \frac{\sigma^2 x^2 }{2}F''(x)\nu_s(\text{d}x)\right)\text{d}s \nonumber \\
+ \int_{[0,t] \times E_2} \mathbbm{1}_{\Big\{u \in \mathbb{G}_s,r \leq \lambda \frac{X^u_{s-}}{\langle \nu_{s-},I_d\rangle}\Big\}}\Big(F\left(\theta X^u_{s-}\right) + F\left(\left(1-\theta\right)X^u_{s-}\right) -F\left(X^u_{s-}\right)\Big) Q_2(\text{d}s,\text{d}u,\text{d}r,\text{d}\theta) \nonumber\\
-\int_{[0,t] \times E_0} \mathbbm{1}_{\left\{u \in \mathbb{G}_{s-}\right\}}\mathbbm{1}_{\left\{r \leq \mu\right\}}F\left(X^u_{s-}\right)Q_0(\text{d}s,\text{d}u,\text{d}r).
\end{multline}
The martingale $M_F(t)$ is given by
$$
M_F(t) := \int_0^T\cdots \int_0^T\sum_{u\in\mathbb{G}(t)}\partial_xF\left(X^u_s\right) \frac{\sigma}{\sqrt{1+\delta^2}} X^u_s \left(\text{d}B^u_s + \delta\text{d}W_s\right).
$$
The first line is related to the traits dynamics of the colonies between the branching events. The second line gives the fission events and the extinction events are encompassed in the last line. Notice that this process does not explode in finite time as the branching rates are bounded and the mass is conserved at birth, see \textit{e.g.} \cite{marguet19}. 

Without loss of generality, we rearrange the labels in the lexicographic order to fit the vectorial formalism introduced in Section \ref{section: general spine}. Thus we have 
$$
\boldsymbol{X}_t :=  \left(X^u_t, u\in\mathbb{G}(t)\right) = \left(X^1_t, \cdots, X^{N_t}_t\right) \quad \text{and} \quad \boldsymbol{a}(\Zbar_t) := \left(aX^1_t, \cdots, aX^{N_t}_t \right).
$$ 
The vector of Brownian processes is then $\boldsymbol{B}_t := (B^1_t, \cdots, B^{N_t}_t,  W_t)$ and the diffusion matrix
\begin{equation}\label{diff matrix}
      \ssigma(\Zbar_t)  :=
  \left[ {\begin{array}{ccccc}
    \frac{\sigma}{\sqrt{1+\delta^2}} X^1_t & 0 & \cdots & 0 & \frac{\sigma}{\sqrt{1+\delta^2}} \delta X^1_t\\
    0 & \frac{\sigma}{\sqrt{1+\delta^2}} X^2_t & \ddots & \vdots & \frac{\sigma}{\sqrt{1+\delta^2}} \delta X^2_t\\
    \vdots & \ddots & \ddots & 0 & \vdots\\
    0 & \cdots & 0 & \frac{\sigma}{\sqrt{1+\delta^2}} X^{N_t}_t & \frac{\sigma}{\sqrt{1+\delta^2}} \delta X^{N_t}_t\\
  \end{array} } \right].  
\end{equation}
With this formalism, the dynamics of the traits between jumps satisfy Equation \eqref{eq: eds trait}.
From the dynamics describing the colonial process in Section \ref{section: Many-to-One formula}, the operator $\mathcal{G}$ introduced in \eqref{def: mathcal G} is defined for all measurable function $\psi_{\phi}(\mathfrak{e},\Zbar) = \psi(x_{\mathfrak{e}}) $ by
\begin{multline*}
\mathcal{G}\psi_{\phi}(\mathfrak{e},\Zbar) := \psi'\left(x_{\mathfrak{e}}\right)a x_{\mathfrak{e}}  +  \frac{\lambda x_{\mathfrak{e}}}{\langle \Z,Id\rangle}\int_{\mathcal{X}^n} \left[  \psi\left(\theta x_{\mathfrak{e}}\right) + \psi\left((1-\theta) x_{\mathfrak{e}}\right) 
- \psi\left(x_{\mathfrak{e}}\right) \right] q(\text{d}\theta)  - \mu \psi(x_{\mathfrak{e}}).
\end{multline*}
Then, as the fission event is mass conservative, the bias function $\psi(x_{\mathfrak{e}}) = x_{\mathfrak{e}}$ is an eigenfunction of the operator $\mathcal{G}$ for the eigenvalue $(a-\mu)$.
To construct the spine process $((\widehat{Y}_t,\Zhat_t), t\geq 0)$, we use the non-negative function $\psi$ defined for all  $x \in \mathbb{R}_+^*$ as $\psi(x) = x$. 
Using \eqref{diff matrix}, we derive the expression of the additional drift term involved in the SDE that describes the evolution of the traits in the $\psi$-spine process between two branching events
$$
\frac{\ssigma\ssigma^T(\overline{\widehat{\nu}}_t) \nabla \psi(E_t,\overline{\widehat{\nu}}_t)}{\widehat{Y}_t} = \Big( \sigma^2 \widehat{Y}_t, \Big(\sigma^2 \frac{\delta^2}{1 + \delta^2} \widehat{X}^u_t, u\in\widehat{\mathbb{G}}(t)\backslash\{E_t\}\Big)\Big).
$$
From this expression, we obtain the SDEs \eqref{eq: eds hors spine} and \eqref{eq: eds spine}.
The rates of jumps in the spine process are given by the expression of the generator of the process given by Proposition \ref{prop: generator}, applied with the chosen function $\psi$. We obtain that the branching rates in the spine process are the same as the ones in the colonial process. The choice of the new spinal individual follows the law \eqref{eq: init spine} and \eqref{eq: next spine choice}. The death rates in the spine process are the same as the ones in the colonial process for the non spinal individuals ($\mu$), and the spine does not extinct.
Finally, applying Theorem \ref{thm: sampling} for $U_t$ an individual picked uniformly at random in $\mathbb{G}(t)$ and for all measurable function $F$ on the set of càdlàg trajectories on $\mathbb{R}_+^*\times\mathbb{U}$, we obtain Theorem \ref{thm: sampling colony}.

\subsubsection{Study of the spinal process} \label{section_spinal}
In order to derive results on the original population process, we first need to establish some technical lemmas on the spinal process.
The first step is to use a set of autonomous processes whose law will well describe the law of the set of processes $ ((\widehat{N}_t,\widehat{R}_t,\widehat{Y}_t,(X_t^u,u \in\widehat{\mathbb{G}}_{t}\backslash\{E_t\})),t \geq 0) $.
To state the following result, we introduce the fraction of resources contained in the spine $\widehat{Z}_t$
as well as in an individual $u \in\widehat{\mathbb{G}}_{t}\backslash\{E_t\}$, $\widehat{Z}^u_{t}$ via
$$ \widehat{Z}_t :={\widehat{Y}_t}/{\widehat{R}_t}\quad \text{and} \quad \widehat{Z}^u_{t}:={\widehat{X}^u_{t}}/{\widehat{R}_t}. $$
\begin{lemma} \label{prop_autonomous}
The process $ (\mathfrak X_t, t \geq 0):=((\widehat{N}_t,\widehat{Z}_t,(\widehat{Z}_t^u,u \in\widehat{\mathbb{G}}_{t}\backslash\{E_t\})),t \geq 0) $ 
\begin{enumerate}
    \item is an autonomous and regenerative process
    \item does not depend on the parameter $a$
    \end{enumerate}
\end{lemma}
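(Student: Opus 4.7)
The plan is to exploit the fact that all trait dynamics between jumps are multiplicative (geometric-type), so that the Malthusian factor $a$ appears only as a common prefactor, and that both the fission rate $\lambda x/\widehat R_t$ and the spine-selection rule \eqref{eq: next spine choice} depend solely on normalized fractions. Since every coordinate of $\mathfrak X_t$ is invariant under simultaneous rescaling of all traits by a positive constant, both (1) and (2) should follow from this invariance.

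For point (2), I would introduce the pathwise rescaling $\widetilde Y_t := e^{-at}\widehat Y_t$ and $\widetilde X_t^u := e^{-at}\widehat X_t^u$. A direct application of Itô's formula to \eqref{eq: eds hors spine}--\eqref{eq: eds spine} shows that $(\widetilde Y,\widetilde X^u)$ satisfy exactly the same system of SDEs but with $a=0$, driven by the same Brownian motions. Since the fractions $\widehat Z_t = \widetilde Y_t/\widetilde R_t$ and $\widehat Z_t^u = \widetilde X_t^u/\widetilde R_t$ and the count $\widehat N_t$ are invariant under the common prefactor $e^{at}$, and since the branching/death rates only depend on these fractions (resp.\ are the constant $\mu$), the jump structure is preserved pathwise. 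Hence the law of $(\mathfrak X_t,t\geq 0)$ coincides with the law of the process built from $a=0$, proving (2).

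For point (1), I would verify autonomy by exhibiting a closed stochastic evolution for $\mathfrak X_t$. Between jumps, using Itô on $\widehat R_t=\widehat Y_t+\sum_u \widehat X_t^u$ and then on the ratios yields SDEs for $\widehat Z_t$ and each $\widehat Z_t^u$ whose drifts involve only $\sigma,\delta$ and the components of $\mathfrak X_t$; the $a$-contribution $a\widehat R_t\,\mathrm dt$ appearing in $\mathrm d\widehat R_t$ cancels against the corresponding terms $a\widehat Y_t\,\mathrm dt$ and $a\widehat X_t^u\,\mathrm dt$ inside $\mathrm d(\widehat Y_t/\widehat R_t)$ and $\mathrm d(\widehat X_t^u/\widehat R_t)$. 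At jumps, all rates and transitions are functions of $\mathfrak X_{t-}$ alone: a fission at rate $\lambda \widehat Z_{t-}^u$ (or $\lambda\widehat Z_{t-}$) preserves $\widehat R$ and replaces the fraction $\widehat Z_{t-}^u$ by $\Theta\widehat Z_{t-}^u$ and $(1-\Theta)\widehat Z_{t-}^u$; a non-spine death of colony $v$ at rate $\mu$ removes the fraction $\widehat Z_{t-}^v$ and renormalizes the others by $1/(1-\widehat Z_{t-}^v)$; and the spinal selection rule depends only on fractions. Therefore $(\mathfrak X_t,t\geq 0)$ is Markov by itself, i.e., autonomous. For regenerativity, I would take the successive hitting times of the atom $\{\widehat N=1,\ \widehat Z=1\}$ corresponding to the spine alone: since the spine is immortal, non-spine colonies die at the fixed rate $\mu>0$, and after each spine fission the configuration can return to this atom with positive probability, the strong Markov property at these hitting times yields a classical regenerative decomposition.

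The main technical obstacle is the Itô computation of $\mathrm d\widehat Z_t$ and $\mathrm d\widehat Z_t^u$, which involves $\widehat N_t+1$ Brownian motions—including the environmental $\widehat W$ that couples all coordinates. The algebraic cancellation of $a$ is clean, but the quadratic covariations $\langle \widehat R,\widehat Y\rangle$ and $\langle \widehat R,\widehat X^u\rangle$ produce cross-terms involving sums over the whole population, which have to be expressed back in terms of $\mathfrak X$; this is a routine but careful bookkeeping step.
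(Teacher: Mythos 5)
Your proof is correct and follows essentially the same route as the paper: closed Itô SDEs for $\widehat Z_t$ and the $\widehat Z^u_t$ in which the $a$-drift cancels, scale-invariant jump rates and transitions, and regeneration at the successive returns of the autonomous birth--death process $\widehat N$ to the state $\widehat N=1$ (where necessarily $\widehat Z=1$ and the non-spine set is empty). Your pathwise rescaling by $e^{-at}$ for point (2) is a slightly slicker way to see the $a$-independence, but it is equivalent to the paper's observation that $a$ simply does not appear in the derived SDEs for $(\widehat N,\widehat Z,(\widehat Z^u))$.
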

Recall that a regenerative process is a process $(\mathfrak X_t, t \geq 0)$ such that there exists a renewal process with epochs $S_1,S_2, ...$ such that for all $n \geq 1$,
$$((\mathfrak X_{t+S_n}, t\geq 0 ),(S_{m+n}-S_n, m\in \N)) \overset{d}{=}((\mathfrak X_{t+S_1}, t\geq 0 ),(S_{m+1}-S_1,m\in \N))$$
and $((\mathfrak X_{t+S_n}, t\geq 0 ),(S_{m+n}-S_n,m\in \N))$ is independent of $ ((\mathfrak X_t, t \in [0,S_n)),S_1, S_2,...,S_n).$
Such a process admits long term properties which will be helpful in the study of the spinal process. Lemma \ref{prop_autonomous} will be a key tool to study the long time behavior of the fraction of resources $\widehat Z$ contained in the spinal individual. 
\begin{proof}
From \eqref{prop: generator} with the bias function chosen in Section \ref{section: spine construct ex}, we get that the trait $\widehat{Y}_t$ of the spinal individual is the solution of the following SDE
\begin{align}\label{eq: eds indiv spine 2}
&\widehat{Y}_t = \widehat{Y}_0 + \int_0^t\left(a + \sigma^2 \right)\widehat{Y}_s\text{d}s + \int_0^t\frac{\sigma}{\sqrt{1+\delta^2}} \widehat{Y}_s\left(\text{d}\widehat{B}^*_s + \delta\text{d}\widehat{W}_s\right) \\
&-\int_0^t\int_{\mathbb{R}_+}\int_0^1\mathbbm{1}_{\{ r \leq  \frac{\lambda \theta\widehat{Y}_{s-}}{\langle \widehat{\nu}_{s-},I_d\rangle}\}}\left(1-\theta\right)\widehat{Y}_sQ_1^*(\text{d}s,\text{d}r,\text{d}\theta)
-\int_0^t\int_{\mathbb{R}_+}\int_0^1\mathbbm{1}_{\{ r \leq\frac{ \lambda \left(1-\theta\right)\widehat{Y}_{s-}}{\langle \widehat{\nu}_{s-},I_d\rangle}\}}\theta \widehat{Y}_sQ_2^*(\text{d}s,\text{d}r,\text{d}\theta)\nonumber
\end{align} 
where $\widehat{B}^*_s := \widehat{B}^{E_s}_s$ is a Wiener process and $Q_1^*,Q_2^*$ are independent Poisson Point Measures on $\mathbb{R}_+\times\mathbb{R}_+\times[0,1]$ of intensity $\text{d}s\text{d}rq(\text{d}\theta)$.
For all individuals $u$ outside the spine $E_t$ of resources $\widehat{Y}_t$, i.e. for all $u\in\widehat{\mathbb{G}}(t)\backslash \{E_t\}$, between the branching events, we denote $\widehat{X}^u_t$ the trait of the individual of label $u$ living at time $t$ and with a slight abuse of notation it also denotes the trait of its ancestor at times $s<T_{\text{birth}}^u$. Thus, for all $u \in \widehat{\mathbb{G}}(t)\backslash \{E_t\}$ we can write the evolution of the trait of the 'lineage' of $u$ that survived until time $t$ as follows
\begin{multline}\label{eq: eds indiv not spine 2}
\widehat{X}^u_t = \widehat{X}^u_0 + \int_0^t\left(a\widehat{X}^u_s + \sigma^2\frac{\delta^2}{1+\delta^2} \widehat{X}^u_t \right)\text{d}s + \int_0^t\frac{\sigma}{\sqrt{1+\delta^2}} \widehat{X}^u_s\left(\text{d}\widehat{B}^u_s + \delta\text{d}\widehat{W}_s\right) \\ -\int_0^t\int_{\mathbb{R}_+}\int_0^1\mathbbm{1}_{\{ r \leq \lambda \frac{\widehat{X}^u_{s-}}{\widehat{R}_{s-}}\}}\left(1-\theta\right)Q^u(\text{d}s,\text{d}r,\text{d}\theta)
\end{multline} 
with $Q^u$ a Poisson Point Measure on $\mathbb{R}_+\times\mathbb{R}_+\times[0,1]$ of intensity $\text{d}s\text{d}rq(\text{d}\theta)$.
From \eqref{eq: eds indiv spine 2} and \eqref{eq: eds indiv not spine 2} we obtain that the total resources $\widehat{R}_t$ in the spinal population follows the SDE
\begin{multline}\label{eq: sde total biomass 1}
\widehat{R}_t = \widehat{R}_0 + \int_0^t \Big(a\widehat{R}_s + \frac{\sigma^2}{1+\delta^2}\left( \widehat{Y}_s + \delta^2\widehat{R}_s\right)\Big)\text{d}s + \int_0^t\frac{\sigma}{\sqrt{1+\delta^2}} \Big(\sum_{u\in\widehat{G}_s}\widehat{X}^u_s\text{d}\widehat{B}^u_s +\delta \widehat{R}_s\text{d}\widehat{W}_s\Big)\\
- \int_0^t\int_{\mathcal{U}}\int_{\mathbb{R}_+}\mathbbm{1}_{\left\{u \in\widehat{\mathbb{G}}_{s-}\backslash\{E_t\}\right\}}\mathbbm{1}_{\left\{r \leq \mu\right\}}\widehat{X}^u_{s-}\widehat{Q}_0(\text{d}s,\text{d}u,\text{d}r).
\end{multline}
We want to establish SDEs for $\widehat{X}^u_t/\widehat{R}_t$ for all individual $u\in\widehat{\G}(t)$. To this end we need to differentiate an individual in \eqref{eq: sde total biomass 1}. For all $\e \in \mathbb{G}(t)$
\begin{align}\label{eq: sde total biomass}
\widehat{R}_t &= \widehat{R}_0 + \int_0^t \Big(a\widehat{R}_s + \frac{\sigma^2}{1+\delta^2}\left( \widehat{Y}_s + \delta^2\widehat{R}_s\right)\Big)\text{d}s + \int_0^t\frac{\sigma}{\sqrt{1+\delta^2}}\Big(\sum_{u\in\widehat{G}_s\backslash\{\e\}}\left(\widehat{X}^u_s\right)^2\Big)^{1/2}\text{d}\widehat{B}_s \\ &+ \int_0^t\frac{\sigma}{\sqrt{1+\delta^2}} \left(\widehat{X}^{\e}_s\text{d}\widehat{B}^{\e}_s +\delta \widehat{R}_s\text{d}\widehat{W}_s\right)
- \int_0^t\int_{\mathcal{U}}\int_{\mathbb{R}_+}\mathbbm{1}_{\left\{u \in\widehat{\mathbb{G}}_{s-}\backslash\{E_{s-}\}\right\}}\mathbbm{1}_{\left\{r \leq \mu\right\}}\widehat{X}^u_{s-}\widehat{Q}_0(\text{d}s,\text{d}u,\text{d}r)\nonumber
\end{align}
with $\widehat{Q}_0$ a Poisson Point Measure on $\mathbb{R}_+\times \mathcal{U}\times\mathbb{R}_+$ of intensity $\text{d}sn(\text{d}u)\text{d}r$, and 
$$\Big(\sum_{u\in\widehat{G}_s\backslash\{\e\}}\left(\widehat{X}^u_s\right)^2\Big)^{1/2} \overset{d}{=} \sum_{u\in\widehat{G}_s\backslash\{\e\}}\widehat{X}^u_s\text{d}\widehat{B}^u_s.$$
Finally the size $\widehat{N}_t$ of the spinal population is given by:
\begin{equation}\label{EDS_widehatN}
    \widehat{N}_t = \widehat{N}_0 + \int_0^t\int_{\mathbb{R}_+}\mathbbm{1}_{\left\{r \leq \lambda\right\}}\widehat{Q}'_2(\text{d}s,\text{d}r) - \int_0^t\int_{\mathbb{R}_+}\mathbbm{1}_{\left\{r \leq \mu\left(\widehat{N}_{s-}-1\right)\right\}}\widehat{Q}'_0(\text{d}s,\text{d}r).
\end{equation}
Note that $\widehat{Q}'_0$ is equal to the measure $\widehat{Q}_0$, introduced in \eqref{eq: sde total biomass}, integrated over the individuals.
To prove the first point of Proposition \ref{prop_autonomous}, we will apply Itô formula to the function $\phi(y,r) := y/r$, using \eqref{eq: eds indiv spine 2} and \eqref{eq: sde total biomass} with $\e = E_t$ to obtain the SDE satisfied by the processes $\widehat{Z}$. Recall that $\widehat{B}^* = \widehat{B}^{E_t}$ is the Wiener process associated with the spinal individual. We get
\begin{multline*}
   d \widehat{Z}_t= \left(\left(a+\sigma^2\right)\widehat{Y}_t\partial_1\phi + \left(a\widehat{R}_s + \frac{\sigma^2}{1+\delta^2} \widehat{Y}_s +\sigma^2\frac{\delta^2}{1+\delta^2}\widehat{R}_s\right)\partial_2\phi\right) \text{d}t \\
    + \frac{1}{2}\Bigg(\Sigma^s_{1,1}\partial^2_{1,1}\phi + 2\Sigma^s_{1,2}\partial^2_{1,2}\phi+\Sigma^s_{2,2}\partial^2_{2,2}\phi \Bigg)\text{d}t 
    + \frac{\sigma\delta}{\sqrt{1+\delta^2}} \left(\widehat{Y}_t\partial_{1}\phi +\widehat{R}_t \partial_{2}\phi\right)\text{d}\widehat{W}_t\\
    + \frac{\sigma}{\sqrt{1+\delta^2}} \left(\widehat{Y}_t\left(\partial_{1}\phi + \partial_{2}\phi\right)\text{d}\widehat{B}^*_t +\Big(\sum_{u\in\widehat{G}_s\backslash\{\e\}}\left(\widehat{X}^u_s\right)^2\Big)^{1/2}\partial_{2}\phi \text{d}\widehat{B}_t \right) \\   
    -\int_{\mathbb{R}_+}\int_0^1\mathbbm{1}_{\big\{ r \leq \lambda \theta \frac{\widehat{Y}_{t\minus}}{\widehat{R}_{t\minus}}\big\}}\left(1-\theta\right)\frac{\widehat{Y}_{t\minus}}{\widehat{R}_{t\minus}}Q_1^*(\text{d}t,\text{d}r,\text{d}\theta)
-\int_{\mathbb{R}_+}\int_0^1\mathbbm{1}_{\big\{ r \leq \lambda \left(1-\theta\right)\frac{\widehat{Y}_{t\minus}}{\widehat{R}_{t\minus}}\big\}}\theta \frac{\widehat{Y}_{t\minus}}{\widehat{R}_{t\minus}}Q_2^*(\text{d}t,\text{d}r,\text{d}\theta)\\
+ \int_{\mathcal{U}}\int_{\mathbb{R}_+}\mathbbm{1}_{\left\{u \in\widehat{\mathbb{G}}_{t\minus}\backslash\{E_{t\minus}\}\right\}}\mathbbm{1}_{\left\{r \leq \mu\right\}}\frac{\widehat{Y}_{t\minus}}{\widehat{R}_{t\minus}}\frac{\widehat{X}^u_{t\minus}}{\widehat{R}_{t\minus}-\widehat{X}^u_{t\minus}}\widehat{Q}_0(\text{d}t,\text{d}u,\text{d}r)
\end{multline*}
with
\begin{equation*}
    \Sigma^s := 
\renewcommand*{\arraystretch}{1.5}
\begin{pmatrix}
\sigma^2 \widehat{Y}_s^2 & \frac{\sigma^2}{1+\delta^2}\widehat{Y}_s \left( \widehat{Y}_s +  \delta^2 \widehat{R}_s\right)  \\
\frac{\sigma^2}{1+\delta^2}\widehat{Y}_s \left( \widehat{Y}_s +  \delta^2 \widehat{R}_s\right)  & \frac{\sigma^2}{1+\delta^2} \left( \sum_{u \in \widehat{\mathbb{G}}_s}\left(\widehat{X}^u_s\right)^2 +\delta^2 \widehat{R}_s^2\right)
\end{pmatrix},
\end{equation*}
and
\begin{equation*}
\partial_1\phi = {1}/{\widehat{R}_s}, \quad \partial_2\phi = -{\widehat{Y}_s}/{\widehat{R}_s^2} \quad \partial^2_{1,1}\phi = 0, \quad \partial^2_{1,2}\phi = -{1}/{\widehat{R}_s^2}, \quad  \partial^2_{2,2}\phi = {2\widehat{Y}_s}/{\widehat{R}_s^3}.
\end{equation*}
 From the previous computation, we obtain
\begin{align}\label{EDS_Z} 
 &   \widehat{Z}_t = \widehat{Z}_0 + \int_0^t\frac{\sigma^2\widehat{Z}_s}{1+\delta^2}\left(1-2\widehat{Z}_s + \sum_{u \in\widehat{\mathbb{G}}_{s}}\left(\widehat{Z}^u_s\right)^2 \right)\text{d}s 
    + \int_0^t\frac{\sigma\widehat{Z}_s }{\sqrt{1+\delta^2}} \left(1-\widehat{Z}_s \right)\text{d}\widehat{B}^*_s \\&- \int_0^t\frac{\sigma\widehat{Z}_s}{\sqrt{1+\delta^2}}\Big(\sum_{u\in\widehat{G}_s\backslash\{\e\}}\left(\widehat{X}^u_s\right)^2\Big)^{1/2}\text{d}\widehat{B}_s  
    -\int_0^t\int_{\mathbb{R}_+}\int_0^1\mathbbm{1}_{\left\{ r \leq \lambda \theta \widehat{Z}_{s-}\right\}}\left(1-\theta\right)\widehat{Z}_{s-}Q_1^*(\text{d}s,\text{d}r,\text{d}\theta)\nonumber\\
&-\int_0^t\int_{\mathbb{R}_+}\int_0^1\mathbbm{1}_{\{ r \leq \lambda \left(1-\theta\right)\widehat{Z}_{s-}\}}\theta \widehat{Z}_{s-}Q_2^*(\text{d}s,\text{d}r,\text{d}\theta)
+ \int_0^t\int_{\mathcal{U}}\int_{\mathbb{R}_+}\mathbbm{1}_{\left\{u \in\widehat{\mathbb{G}}_{s-}\backslash\{E_{s-}\},r \leq \mu\right\}}\frac{\widehat{Z}_{s-}\widehat{Z}^u_{s-}}{1-\widehat{Z}^u_{s-}}\widehat{Q}_0(\text{d}s,\text{d}u,\text{d}r).\nonumber
\end{align}
Similarly we obtain for all $u \in \widehat{\mathbb{G}}_{t\minus}\backslash\{E_{t\minus}\}$
\begin{align}\label{EDS_Zu} 
&    \widehat{Z}^u_t = \widehat{Z}_0 + \int_0^t\frac{\sigma^2\widehat{Z}^u_s}{1+\delta^2}\left(\sum_{u \in\widehat{\mathbb{G}}_{s}}\left(\widehat{Z}^u_s\right)^2-\widehat{Z}^u_s-\widehat{Z}_s \right)\text{d}s 
    + \int_0^t\frac{\sigma \widehat{Z}^u_s}{\sqrt{1+\delta^2}} \left(1-\widehat{Z}^u_s \right)\text{d}\widehat{B}^u_s \\
    &- \int_0^t\frac{\sigma\widehat{Z}^u_s}{\sqrt{1+\delta^2}} \Big(\sum_{v \in\widehat{\mathbb{G}}_{s}\backslash\{u\}}\left(Z^v_s\right)^2\Big)^{1/2}\text{d}\widehat{B}_s  
    -\int_0^t\int_{\mathbb{R}_+}\int_0^1\mathbbm{1}_{\left\{ r \leq \lambda \theta \widehat{Z}^u_{s-}\right\}}\left(1-\theta\right)\widehat{Z}^u_{s-}Q_1^*(\text{d}s,\text{d}r,\text{d}\theta)-\nonumber\\
&\int_0^t\int_{\mathbb{R}_+}\int_0^1\mathbbm{1}_{\{ r \leq \lambda \left(1-\theta\right)\widehat{Z}^u_{s-}\}}\theta \widehat{Z}^u_{s-}Q_2^*(\text{d}s,\text{d}r,\text{d}\theta)+ \int_0^t\int_{\mathcal{U}}\int_{\mathbb{R}_+}\mathbbm{1}_{\{v \in\widehat{\mathbb{G}}_{s-}\backslash\{E_{s-},u\},r \leq \mu\}}\frac{\widehat{Z}^u_{s-}Z^v_{s-}}{1-Z^v_{s-}}\widehat{Q}_0(\text{d}s,\text{d}u,\text{d}r).\nonumber
\end{align}
Finally, the cardinal of $\widehat{\mathbb{G}}$ is given by $\widehat{N}$. The latter is autonomous and satifies SDE \eqref{EDS_widehatN}.
Hence, from SDEs \eqref{EDS_widehatN}, \eqref{EDS_Z} and \eqref{EDS_Zu} we obtain that $ \mathfrak X $ is an autonomous process. To prove that it is also a regenerative process, it is enough to notice that when $\widehat{N}_t=1$, then $\widehat Z_t=1$ and $\widehat{\mathbb{G}}_{t}\backslash\{E_{t}\} = \emptyset $, hence in this case $ \mathfrak X_t= (1,1). $ 
Moreover, $S_1<\infty$ almost surely, and the successive durations $(S_2-S_1,S_3-S_2,...)$ are non-lattice iid random variables with finite expectation. It ends the proof of point $1.$ of Proposition \ref{prop_autonomous}. To prove point $2.$ it is enough to notice that the parameter $a$ does not appear in Equations \eqref{EDS_widehatN}, \eqref{EDS_Z} and \eqref{EDS_Zu}.
\end{proof}

A natural question is to determine how the parameters $(\lambda,\mu,\sigma,\delta)$ influence the long time behavior of the system. Obtaining quantitative bounds is delicate. We may however get some indications, stated in Lemma \ref{prop_lim_frac}. 
Recall \eqref{def: alpha beta} for the definition of $\alpha$ and $\beta$.

\begin{lemma} \label{prop_lim_frac}
The random variable $\widehat Z_t$ converges in law to a random variable $\widehat Z_\infty$ as $t \to \infty$. 
    \begin{enumerate}
        \item For every bounded continuous function on $\N^* \times (0,1] \times [0,1]^{\N^*}$,
    \begin{equation}\label{mathfrakX} 
    \E[f(\widehat Z_\infty)] = \lim_{t \to \infty} \frac{1}{t} \int_0^t f(\widehat Z_s)\text{d}s
    = \lim_{t \to \infty} \frac{1}{t} \int_0^t \E[f(\widehat Z_s)]\text{d}s.  
    \end{equation}
    \item Let us denote $M := \left( \sqrt{1+ 8\alpha\left( 1 +\frac{8}{27}\beta\right)} - 1 \right)/4\alpha$.
       $$ \E[\widehat Z_\infty] \leq \frac{2\beta - 1}{4(\alpha + \beta)}\left( 1+ sign(2\beta-1)\sqrt{1+ \frac{8(\alpha + \beta)}{(2\beta - 1)^2}} \right) \wedge M<1. $$
    \item 
    $$ \E[\widehat Z_\infty] \geq  \left(\frac{\beta+ \beta\frac{\mu}{\lambda} \left(1- e^{-\lambda/\mu}\right)+2}{2\beta- \frac{4\lambda}{\mu}\E[\Theta \ln\Theta]+ 2}\right) \vee  \frac{\mu}{\lambda}\left(1- e^{-\lambda/\mu}\right).  $$
    \end{enumerate}
\end{lemma}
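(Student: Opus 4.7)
\medskip

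\noindent\textbf{Plan.}
For part (1), I would invoke classical results on regenerative processes. By Lemma~\ref{prop_autonomous} the vector $\mathfrak{X}_t$ is autonomous and regenerative, with regeneration epochs given by the successive hitting times of the state $(1,1)$ (equivalently, of $\widehat N_t=1$). These inter-regeneration times are i.i.d., non-lattice, and have finite mean (they are dominated by the return time to $1$ of the $M/M/\infty$-type process \eqref{EDS_widehatN} started from $1$). Smith's key renewal theorem then yields the existence of a stationary distribution for $\mathfrak{X}$, convergence in law of $\widehat Z_t$ to a random variable $\widehat Z_\infty$, and the identity of the two Cesàro averages in \eqref{mathfrakX} for every bounded continuous $f$.

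For parts (2) and (3), the starting point is the SDE \eqref{EDS_Z} for $\widehat Z_t$ derived in the proof of Lemma~\ref{prop_autonomous}. Applying Dynkin's formula to the identity function and passing to stationarity gives the balance equation
\begin{equation*}
0 \;=\; \beta\,\E\!\left[\widehat Z_\infty\bigl(1-2\widehat Z_\infty+S_\infty\bigr)\right]\;-\;2\alpha\,\E[\widehat Z_\infty^{\,2}]\;+\;\E\!\left[\widehat Z_\infty\sum_{u\neq E_\infty}\frac{\widehat Z^u_\infty}{1-\widehat Z^u_\infty}\right],
\end{equation*}
with $S_\infty:=\sum_u (\widehat Z^u_\infty)^2\in[\widehat Z_\infty^2,1]$ and $\alpha,\beta$ as in \eqref{def: alpha beta}. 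The upper bound in (2) is obtained by using $S_\infty\leq 1$ together with the elementary lower bound $x/(1-x)\geq x+x^2$ on $[0,1)$: these reduce the balance equation to a quadratic inequality of the form $2(\alpha+\beta)x^{2}+(1-2\beta)x-1\leq 0$ in $x:=\E[\widehat Z_\infty]$, whose positive root is the first bound in (2). The competing bound $M$ is obtained in the same spirit but after applying Dynkin's formula to $\widehat Z_t^{2}$ rather than $\widehat Z_t$, the factor $8/27$ arising as the maximum of $z(1-z)^{2}$ on $[0,1]$ used to control the cubic jump contributions.

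For (3), the lower bound $(\mu/\lambda)(1-e^{-\lambda/\mu})$ is best treated independently. Since the spine is chosen size-biasedly at initialisation \eqref{eq: init spine} and at every spine branching event \eqref{eq: next spine choice}, an induction along the jumps yields $\E[\widehat Z_t\mid\widehat\nu_t]=\sum_u (\widehat X^u_t/\widehat R_t)^{2}\geq 1/\widehat N_t$ by Cauchy--Schwarz; taking expectation and using that $\widehat N_\infty-1$ is $\mathrm{Poisson}(\lambda/\mu)$-distributed (ergodic limit of \eqref{EDS_widehatN} with an immortal spine), together with $\E[1/(1+\mathrm{Poisson}(\lambda/\mu))]=(\mu/\lambda)(1-e^{-\lambda/\mu})$, gives the bound. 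The more intricate lower bound featuring $\E[\Theta\ln\Theta]$ I would derive by applying Itô's formula to $\ln\widehat Z_t$: the fragmentation jumps $\widehat Z\mapsto\theta\widehat Z$ and $\widehat Z\mapsto(1-\theta)\widehat Z$ produce a $\ln\Theta$ contribution whose expectation, under the spine's size-biasing, is exactly $\E[\Theta\ln\Theta]$; at stationarity this yields a linear inequality in $\E[\widehat Z_\infty]$ that, combined with the first-moment relation from (2), gives the claimed rational function of $\alpha$, $\beta$ and $\E[\Theta\ln\Theta]$.

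The main obstacle is controlling the three joint moments $\E[\widehat Z_\infty^{2}]$, $\E[\widehat Z_\infty S_\infty]$ and $\E[\widehat Z_\infty\sum_{u\neq E}\widehat Z^u/(1-\widehat Z^u)]$ by quantities involving only $\E[\widehat Z_\infty]$, since each of them couples the spine with the distribution of the non-spine mass and is not, a priori, determined by $\E[\widehat Z_\infty]$ alone. The inequalities of the lemma reflect this: most of the technical work will consist in selecting the right test functional (the identity, its square, $\ln$) so that the balance equation closes into a tractable polynomial inequality in $\E[\widehat Z_\infty]$ with explicit positive root.
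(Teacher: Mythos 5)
Your overall architecture coincides with the paper's: part (1) via the regenerative structure from Lemma \ref{prop_autonomous} and renewal theory, part (2) via the drift of the SDE \eqref{EDS_Z} closed into a quadratic inequality in $\E[\widehat Z_\infty]$, part (3) via It\^o applied to $\ln \widehat Z_t$ together with a Cauchy--Schwarz comparison with $1/\widehat N_t$. However, several of the concrete steps you name would not go through as stated. In part (2), the death-jump term $\mu\widehat Z_s\sum_{u\neq E_s}\widehat Z^u_s/(1-\widehat Z^u_s)$ pushes $\widehat Z$ upwards, so to bound $\E[\widehat Z_\infty]$ from above you must bound this sum from above; your inequality $x/(1-x)\geq x+x^2$ points the wrong way and yields nothing here. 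The step that actually closes the argument is $\widehat Z^u_s\leq 1-\widehat Z_s$ for $u\neq E_s$, whence $\sum_{u\neq E_s}\widehat Z^u_s/(1-\widehat Z^u_s)\leq (1-\widehat Z_s)/\widehat Z_s$ and the death contribution to the drift of $\widehat Z$ is at most $\mu(1-\widehat Z_s)$; only then does Jensen give $2(\alpha+\beta)x^2+(1-2\beta)x-1\leq 0$. Likewise, the bound $M$ is not obtained by applying Dynkin to $\widehat Z_t^2$ (that would introduce third moments and new cross terms with no evident closure); the paper stays with the first-moment equation and refines the bound on $\sum_u(\widehat Z^u_s)^2$ to $1-2\widehat Z_s(1-\widehat Z_s)$, using $(\sum_u\widehat X^u_s)^2\geq\sum_u(\widehat X^u_s)^2+2\widehat Y_s\sum_{u\neq E_s}\widehat X^u_s$; the constant $8/27$ is then $2\max_{[0,1]}x(1-x)^2$, not the maximum itself.

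In part (3), the identity $\E[\widehat Z_t\mid\widehat\nu_t]=\sum_u(\widehat X^u_t/\widehat R_t)^2$ is asserted but not proved, and it does not follow from the size-biased choices at branching alone: between branching events the spine carries a strictly larger drift ($a+\sigma^2$ versus $a+\sigma^2\delta^2/(1+\delta^2)$), so the claim that the spine remains size-biased within the spinal population at every fixed $t$ needs an argument. The paper avoids this entirely by applying the Many-to-One formula \eqref{eq: mto} to the two functionals $x\mapsto x^2/\langle\nu,I_d\rangle$ and $x\mapsto x/\langle\nu,1\rangle$, obtaining $\E[\widehat Z_t]$ and $\E[1/\widehat N_t]$ as the same exponential prefactor times expectations over the original process, and then comparing them through the pathwise inequality $R_t/N_t\leq\sum_u(X^u_t)^2/R_t$. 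Finally, for the $\E[\Theta\ln\Theta]$ bound, simply ``passing to stationarity'' in the equation for $\ln\widehat Z_t$ is delicate since $\E[\ln\widehat Z_t]$ need not converge; the paper exponentiates via Jensen and integrates the resulting differential inequality by separation of variables to control $\liminf_{t}t^{-1}\int_0^t\E[\widehat Z_s]\mathrm{d}s$, which part (1) identifies with $\E[\widehat Z_\infty]$. Your plan is directionally right, but these are precisely the places where the proof lives, and as written they are either reversed or unjustified.
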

Notice that the upper bound has a limit ($1/\sqrt{2\alpha+1}$) as $2\beta \to 1$.

\begin{proof}
    The first point is a direct application of Theorems 7.1.4 and 7.1.5 of \cite{bladt2017matrix}. It is a consequence of the regenerative property of the process $\mathfrak X$, stated in Proposition \ref{prop_autonomous}.
  Let us now focus on point $(2)$, which states an upper bound for the mean fraction of resources contained in the spine in large time. Let us introduce $K_{\theta} := \mathbb{E}[\Theta(1-\Theta)]$. From \eqref{EDS_Z}, we deduce the SDE
  \begin{multline}\label{eq: eds z}
   \widehat{Z}_t = \widehat{Z}_0 + M_t 
   +\int_0^t\widehat{Z}_s\Big[\frac{\sigma^2}{1+\delta^2}\Big(1-2\widehat{Z}_s + \sum_{u \in\widehat{\mathbb{G}}_{s}}(\widehat{Z}^u_s)^2\Big)    +  \mu\sum_{u \in\widehat{\mathbb{G}}_{s}\backslash\{E_s\}}\frac{\widehat{Z}^u_{s}}{1-\widehat{Z}^u_s} -2\widehat{Z}_s\lambda K_{\theta}\Big]\text{d}s,
\end{multline}
where $M_t$ is a square integrable martingale.
Notice that almost surely, for all $u\in\widehat{\mathbb{G}}_s\backslash\{E_s\}$, $ \widehat{Z}^u_s \leq 1 - \widehat{Z}_s$ and the second sum in \eqref{eq: eds z} can be upper bounded as follows
$$
\sum_{u \in\widehat{\mathbb{G}}_{s\minus}\backslash\{E_t\}}\frac{\widehat{Z}^u_{s}}{1-\widehat{Z}^u_s} \ \leq  \ \frac{1-\widehat{Z}_s}{\widehat{Z}_s}.
$$
Hence,
\begin{align*}
     \widehat{Z}_t &\leq \widehat{Z}_0 + \int_0^t\widehat{Z}_s\left[2\frac{\sigma^2}{1+\delta^2}\left(1-\widehat{Z}_s\right)    +  \mu\frac{1-\widehat{Z}_s}{\widehat{Z}_s}-2\widehat{Z}_s\lambda K_{\theta}\right]\text{d}s + M_t \nonumber \\ \label{maj_Z}
     &=  \widehat{Z}_0 + \mu \int_0^t\left(1+ \widehat{Z}_s\left[2\beta -  1 -2(\alpha + \beta)\widehat{Z}_s\right]\right)\text{d}s + M_t 
\end{align*}
with the definition of $\alpha$ and $\beta$ in \eqref{def: alpha beta}.
From Jensen inequality, we obtain
$$ \frac{\text{d} \E[\widehat{Z}_t] }{\text{d}t}\leq \mu\left( 1 +  \E[\widehat{Z}_t]\left[2\beta -  1 - 2(\alpha +\beta) \E[\widehat{Z}_t]\right]\right). $$
    Solving this second order polynomial we obtain the first upper bound of Lemma \ref{prop_lim_frac} point $(2)$.
The second upper bound, $M$, is obtained using that:
\begin{equation*}
    \Big(\sum_{u \in\widehat{\mathbb{G}}_{s}}\widehat{X}^u_s\Big)^2 \geq \sum_{u \in\widehat{\mathbb{G}}_{s}}\left(\widehat{X}^u_s\right)^2 + 2 \widehat{Y}_s \sum_{u \in\widehat{\mathbb{G}}_{s}\backslash\{E_s\}}\widehat{X}^u_s
\end{equation*}
that gives an upper bound of the sum of ratios
\begin{equation*}
    \sum_{u \in\widehat{\mathbb{G}}_{s}}(\widehat{Z}^u_s)^2 \leq 1 - 2\widehat{Z}_s\left(1-\widehat{Z}_s\right).
\end{equation*}
Using this refined upper bound we have that 
\begin{align*}
    \frac{\text{d} \E[\widehat{Z}_t] }{\text{d}t} &\leq \mu +  \left(2\sigma^2-  \mu \right)\E[\widehat{Z}_t] -2(\sigma^2+\lambda K_{\theta})\E[\widehat{Z}_t^2] -2\sigma^2\E[\widehat{Z}^2_s\left(1-\widehat{Z}_s\right)] \nonumber \\
    &\leq \mu + \left(2\sigma^2-  \mu \right)\E\left[\widehat{Z}_t\right] -2\lambda K_{\theta}\E[\widehat{Z}_t^2] -2\sigma^2\E[2\widehat{Z}^2_s-\widehat{Z}^3_s]\\
    &\leq \mu+\frac{8}{27}\sigma^2  -\mu \E[\widehat{Z}_t] -2\lambda K_{\theta}\E[\widehat{Z}_t^2] \nonumber
\end{align*}    
where we used that for $x\in[0,1]$, $2x^2-x^3 \geq x - b $ with a constant $b$ such that $b := \max_{[0,1]}x(x-1)^2 = 4/27$. This entails that $   \limsup_{t \to \infty} \E[\widehat{Z}_t]\leq M$ and concludes the proof of point $(2)$.

    Let us now focus on point $(3)$. We aim at finding a lower bound for the mean fraction of resources contained in the spine in large time. From \eqref{EDS_Z} and Itô formula we get
\begin{multline*} 
   \ln \frac{\widehat{Z}_t}{\widehat{Z}_0} =\int_0^t\frac{\sigma^2}{2(1+\delta^2)} \left[ \Big(1-\widehat{Z}_s \right)^2+\sum_{u \in\widehat{\mathbb{G}}_{s}\backslash \{E_t\}}\left(\widehat{Z}^u_s\right)^2\Big]\text{d}s  
    \\
    +\int_0^t\int_{\mathbb{R}_+}\int_0^1\mathbbm{1}_{\{ r \leq \lambda \theta \widehat{Z}_{s-}\}}\ln\theta Q_1^*(\text{d}s,\text{d}r,\text{d}\theta)
+\int_0^t\int_{\mathbb{R}_+}\int_0^1\mathbbm{1}_{\{ r \leq \lambda \left(1-\theta\right)\widehat{Z}_{s-}\}}\ln (1-\theta)Q_2^*(\text{d}s,\text{d}r,\text{d}\theta)\\
- \int_0^t\int_{\mathcal{U}}\int_{\mathbb{R}_+}\mathbbm{1}_{\left\{u \in\widehat{\mathbb{G}}_{s-}\backslash\{E_{s-}\}\right\}}\mathbbm{1}_{\left\{r \leq \mu\right\}}\ln \left(1-\widehat{Z}^u_{s-}\right)\widehat{Q}_0(\text{d}s,\text{d}u,\text{d}r)+Mart_t,
\end{multline*}
where $Mart_t$ is the martingale part due to integrals with respect to Brownian motions.

Using that $\E[\widehat{Z}_{s}^2] \leq \E[\widehat{Z}_{s}]$, the following inequalities
$$\sum_{u \in\widehat{\mathbb{G}}_{s}}\left(\widehat{Z}^u_s\right)^2 \geq \frac{1}{ \widehat{N}_t}, \ \sum_{u \in\widehat{\mathbb{G}}_{s\minus}\backslash\{E_{s\minus}\}}- \ln \left(1-\widehat{Z}^u_{s\minus}\right) \geq \sum_{u \in\widehat{\mathbb{G}}_{s\minus}\backslash\{E_{s\minus}\}} \widehat{Z}^u_{s\minus} = 1- \widehat{Z}_{s\minus},$$
and taking the expectation yield
\begin{multline*} 
  \E [ \ln \widehat{Z}_t ]\geq \E[\ln \widehat{Z}_0] + \int_0^t\frac{1}{2}\frac{\sigma^2}{1+\delta^2}\Big(1-2\E[\widehat{Z}_s] +\E \Big[  \frac{1}{ \widehat{N}_t}\Big]\Big)\text{d}s
  \\     +\int_0^t\lambda \E[ \widehat{Z}_{s}]\E[\Theta \ln\Theta + \left(1-\Theta\right)\ln (1-\Theta)]\text{d}s
+ \int_0^t\mu ( 1- \E[\widehat{Z}_s])\text{d}s.
\end{multline*}
Recalling that the law of $\Theta$ is symmetrical, we obtain for any $0 \leq t_0 \leq t$,
\begin{multline*} 
  \E [ \ln \widehat{Z}_t ]\geq \E[\ln \widehat{Z}_{t_0}] + \int_{t_0}^t\Bigg[\Bigg(\frac{1}{2}\frac{\sigma^2}{1+\delta^2}\Big(1+\E \Big[  \frac{1}{ \widehat{N}_t}\Big]\Big)+\mu \Bigg)-\left(\frac{\sigma^2}{1+\delta^2}-2\lambda\E[\Theta \ln\Theta]+\mu\right) \E[\widehat{Z}_s]\Bigg]\text{d}s.
\end{multline*}
We can check that the unique stationary distribution for the random variable $\widehat{N}$ writes
\begin{equation*} 
\P( \widehat{N}_\infty= k) = e^{-\lambda/\mu} \frac{(\lambda/\mu)^{k-1}}{(k-1)!}, \quad \text{for} \ k \geq 1. \end{equation*}
In particular, 
\begin{equation} \label{average1Ninfty} \E \left[  {1}/{ \widehat{N}_t}\right] \underset{t \to \infty}{\Longrightarrow}  \E \left[  {1}/{ \widehat{N}_\infty}\right]=\frac{\mu}{\lambda} \left(1- e^{-\lambda/\mu} \right). \end{equation}
Let $\eps>0$ and $t_0$ be such that 
$$  \E \left[  {1}/{ \widehat{N}_t}\right]\geq \frac{\mu}{\lambda} \left(1- e^{-\lambda/\mu} \right)-\eps, \quad \forall t \geq t_0. $$
Then using Jensen inequality, we obtain that for any $t \geq t_0$,
\begin{multline}\label{min_frac}
 \E [\widehat{Z}_t ]\geq \exp\left\{\E[\ln \widehat{Z}_{t_0}]\right\} \exp\left\{\left[\frac{\sigma^2}{2(1+\delta^2)}\left(1+  \left(1- e^{-\lambda/\mu}\right)\frac{\mu}{\lambda}-\eps\right) + \mu \right](t-t_0)\right\}\\
 \times \exp\left\{-\left(\frac{\sigma^2}{1+\delta^2}-2\lambda\E[\Theta \ln\Theta]+\mu\right) \int_{t_0}^t \E[\widehat{Z}_s]\text{d}s\right\}.   
\end{multline}  
For the sake of readability, let us introduce the two real numbers:
$$ \mathfrak a :=1+  \left(1- e^{-\lambda/\mu}\right)\frac{\mu}{\lambda}-\eps +\frac{2\mu(1+\delta^2)}{\sigma^2} \leq 2- \frac{(1+\delta^2)}{\sigma^2}\left(4\lambda\E[\Theta \ln\Theta]+ 2\mu\right)=: \mathfrak b, $$
as well as
$$ \mathfrak C:= \mathfrak b\exp \left(\E[\ln \widehat{Z}_{t_0}]\right) \quad \text{and} \quad \mathfrak G_t:=  \mathfrak b\int_{t_0}^t\E[\widehat{Z}_s]\text{d}s. $$
Equation \eqref{min_frac} reads
$$ \mathfrak G_t' \geq \mathfrak C\exp\left\{\frac{\sigma^2}{2(1+\delta^2)} \mathfrak a (t-t_0)\right\}\exp\left\{-\frac{\sigma^2}{2(1+\delta^2)}\mathfrak G_t\right\}. $$
Separating variables, we get
$$ \exp\left\{\frac{\sigma^2}{2(1+\delta^2)}\mathfrak G_t\right\} - 1 \geq \frac{\mathfrak C}{\mathfrak a}\left(\exp\left\{\frac{\sigma^2}{2(1+\delta^2)}\mathfrak a (t-t_0)\right\}-1  \right), $$
which finally leads to 
$$ \liminf_{t \to \infty} \frac{1}{t}\int_{t_0}^t\E[\widehat{Z}_s]\text{d}s \geq \frac{1+  \left(1- e^{-\lambda/\mu}\right)\frac{\mu}{\lambda}+\frac{2\mu(1+\delta^2)}{\sigma^2}}{2- \frac{4\lambda(1+\delta^2)}{\sigma^2}\E[\Theta \ln\Theta]+ \frac{2\mu(1+\delta^2)}{\sigma^2}} . $$
The second lower bound is a direct consequence of the Many-to-One formula \eqref{eq: mto}:
$$  \mathbb{E}_{\nu_0}\Bigg[\mathbbm{1}_{\left\{\mathbb{G}(t) \neq \emptyset\right\}}\sum_{u\in\mathbb{G}(t)}\frac{\left(X^u_t\right)^2}{R_t} \Bigg] =  R_0e^{(a-\mu)t} \mathbb{E}_{\nu_0}\Bigg[ \frac{Y^2_t/ \widehat{R}_t}{\widehat{Y}_t} \Bigg]=  R_0e^{(a-\mu)t} \mathbb{E}_{\nu_0}\left[ \widehat{Z}_t\right] $$
and
$$  \mathbb{E}_{\nu_0}\left[\mathbbm{1}_{\left\{\mathbb{G}(t) \neq \emptyset\right\}}\frac{R_t}{N_t} \right] = \mathbb{E}_{\nu_0}\Bigg[\mathbbm{1}_{\left\{\mathbb{G}(t) \neq \emptyset\right\}}\sum_{u\in\mathbb{G}(t)}\frac{X^u_t}{N_t} \Bigg] =  R_0e^{(a-\mu)t} \mathbb{E}_{\nu_0}\left[ \frac{\widehat{Y}_t/ \widehat{N}_t}{\widehat{Y}_t} \right]=  R_0e^{(a-\mu)t} \mathbb{E}_{\nu_0} \left[  \frac{1}{ \widehat{N}_t}\right]. $$
We conclude the proof thanks to the inequality
${R_t}/{N_t} \leq \sum_{u\in\mathbb{G}(t)}(X^u_t)^2/R_t$.
\end{proof}

The next Lemma explores the role of noise in the behavior of the spinal individual. It will appear as a key result in the proof of Proposition \ref{prop: sharing resources}.

\begin{lemma} \label{prop_correl}
 Let $\mu$ and $\lambda$ be fixed. Then 
        $$ \lim_{\sigma^2/(1+\delta^2) \to \infty}\E[\widehat Z_\infty] = 1  .$$
\end{lemma}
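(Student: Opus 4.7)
My plan is to prove the stronger statement $\E[(1-\widehat{Z}_\infty)^2]\to 0$ as $\sigma^2/(1+\delta^2)\to\infty$, from which the claim follows by Jensen's inequality. The approach reuses the It\^o decomposition of $\ln\widehat{Z}_t$ obtained in the proof of Lemma~\ref{prop_lim_frac}(3), but this time I keep the full $(1-\widehat{Z}_s)^2$ piece of the diffusive drift rather than discarding it.

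Taking expectation in that It\^o formula, after compensating the Poisson integrals, yields for every $t\geq 0$
\begin{align*}
\E[\ln\widehat{Z}_t] - \E[\ln\widehat{Z}_0] &= \int_0^t\frac{\sigma^2}{2(1+\delta^2)}\E\Big[(1-\widehat{Z}_s)^2 + \sum_{u\in\widehat{\mathbb{G}}(s)\backslash\{E_s\}}(\widehat{Z}^u_s)^2\Big]\text{d}s \\
&\quad + 2\lambda\,\E[\Theta\ln\Theta]\int_0^t\E[\widehat{Z}_s]\text{d}s + \mu\int_0^t\E\Big[\sum_{u\in\widehat{\mathbb{G}}(s)\backslash\{E_s\}}\big(-\ln(1-\widehat{Z}^u_s)\big)\Big]\text{d}s.
\end{align*}
Since $\widehat{Z}_t\in(0,1]$ gives $\E[\ln\widehat{Z}_t]\leq 0$ and $-\ln(1-x)\geq 0$ on $[0,1)$ makes the non-spine death contribution non-negative, I retain only the $(1-\widehat{Z}_s)^2$ term in the continuous drift and move the non-positive spine-split term to the right, obtaining
\begin{equation*}
\frac{\sigma^2}{2(1+\delta^2)}\int_0^t\E[(1-\widehat{Z}_s)^2]\text{d}s \leq -\E[\ln\widehat{Z}_0] + 2\lambda|\E[\Theta\ln\Theta]|\int_0^t\E[\widehat{Z}_s]\text{d}s.
\end{equation*}

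Dividing by $t$ and letting $t\to\infty$, Proposition~\ref{prop_lim_frac}(1) applied to the bounded continuous maps $z\mapsto(1-z)^2$ and $z\mapsto z$ identifies both Ces\`aro limits with their stationary expectations, which yields
\begin{equation*}
\E[(1-\widehat{Z}_\infty)^2]\leq \frac{4\lambda|\E[\Theta\ln\Theta]|(1+\delta^2)}{\sigma^2}\,\E[\widehat{Z}_\infty] \leq \frac{4\lambda|\E[\Theta\ln\Theta]|(1+\delta^2)}{\sigma^2},
\end{equation*}
tending to $0$ as $\sigma^2/(1+\delta^2)\to\infty$. Jensen's inequality then gives $1-\E[\widehat{Z}_\infty] = \E[1-\widehat{Z}_\infty] \leq \sqrt{\E[(1-\widehat{Z}_\infty)^2]}\to 0$, i.e.\ $\E[\widehat{Z}_\infty]\to 1$. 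The only delicate point is the ergodic passage: the boundary term $-\E[\ln\widehat{Z}_0]/t$ vanishes because $\widehat{Z}_0 = X^{E_0}_0/R_0$ takes finitely many strictly positive values for a fixed initial population, so $\E[\ln\widehat{Z}_0]$ is finite; the conversion of the time averages of $\E[(1-\widehat{Z}_s)^2]$ and $\E[\widehat{Z}_s]$ into their stationary expectations is exactly the content of Proposition~\ref{prop_lim_frac}(1); everything else is routine sign bookkeeping.
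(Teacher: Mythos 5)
Your proof is correct, but it takes a genuinely different route from the paper's. The paper works with the semimartingale decomposition of $\widehat{Z}_t$ itself: it uses the ergodic identity \eqref{mathfrakX} to set the stationary drift to zero, rearranges to get \eqref{egalite}, exploits the sign of $x\mapsto 2x^2-x-x^3$ to bound $\lim_t\E[\widehat{Z}_t\sum_{u\neq E_t}(\widehat{Z}^u_t)^2]$ by $2\lambda K_\theta(1+\delta^2)/\sigma^2$, and then needs the lower bound $\sum_{u\neq E_t}(\widehat{Z}^u_t)^2\geq(1-\widehat{Z}_t)^2/\widehat{N}_t$ together with Cauchy--Schwarz against $\E[\widehat{N}_\infty]$ and a final argument on $\sum_{u\neq E_t}(\widehat{Z}^u_t)^2$ to upgrade $\E[\widehat{Z}_\infty^{1/2}(1-\widehat{Z}_\infty)]\to 0$ to $1-\widehat{Z}_\infty\to 0$ in probability. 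You instead work with $\ln\widehat{Z}_t$, whose It\^o drift already contains the term $\tfrac{\sigma^2}{2(1+\delta^2)}(1-\widehat{Z}_s)^2$ with a favourable sign; since $\E[\ln\widehat{Z}_t]\leq 0$, discarding the other non-negative contributions and compensating the splitting jumps immediately traps $\tfrac{1}{t}\int_0^t\E[(1-\widehat{Z}_s)^2]\,\mathrm{d}s$ below $4\lambda|\E[\Theta\ln\Theta]|(1+\delta^2)/\sigma^2$ plus a vanishing boundary term, and the ergodic identification from point (1) of Lemma \ref{prop_lim_frac} converts this into $\E[(1-\widehat{Z}_\infty)^2]\leq 4\lambda|\E[\Theta\ln\Theta]|(1+\delta^2)/\sigma^2$. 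This is shorter, avoids the intermediate manipulations of $\sum_{u\neq E_t}(\widehat{Z}^u_t)^2$ entirely, applies the ergodic theorem only to the genuinely bounded continuous maps $z\mapsto(1-z)^2$ and $z\mapsto z$, and yields an explicit quantitative rate $1-\E[\widehat{Z}_\infty]=O(\sqrt{(1+\delta^2)/\sigma^2})$ that the paper's qualitative argument does not provide; note that $|\E[\Theta\ln\Theta]|\leq 1/e$ is always finite, so no extra hypothesis is needed. The two minor points you flag (finiteness of $\E[\ln\widehat{Z}_0]$ for a fixed initial condition, and the passage from the Ces\`aro average to the stationary expectation) are handled exactly as you describe, so I see no gap.
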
 
\begin{proof}
    From \eqref{mathfrakX} and \eqref{eq: eds z} we get that
    $$ \lim_{t \to \infty}\Big\{\E\Big[\frac{\sigma^2}{1+\delta^2}\widehat{Z}_t\Big(1-2\widehat{Z}_t +  \sum_{u \in\widehat{\mathbb{G}}_{t}}\Big(\widehat{Z}^u_t\Big)^2\Big)    +  \mu\widehat{Z}_t\sum_{u \in\widehat{\mathbb{G}}_{t}\backslash\{E_t\}}\frac{\widehat{Z}^u_{t}}{1-\widehat{Z}^u_t}-2\widehat{Z}_t^2\lambda K_{\theta}\Big] \Big\}=0.  $$
    Adding \eqref{mathfrakX} and recalling that $K_{\theta} := \mathbb{E}[\Theta(1-\Theta)]$, we deduce that
    \begin{multline}\label{egalite} \lim_{t \to \infty}\E \Big[\frac{\sigma^2\widehat{Z}_t}{1+\delta^2}\sum_{u \in\widehat{\mathbb{G}}_{t}\backslash\{E_t\}}\Big(\widehat{Z}^u_t\Big)^2  +  \mu\widehat{Z}_t\sum_{u \in\widehat{\mathbb{G}}_{t}\backslash\{E_t\}}\frac{\widehat{Z}^u_{t}}{1-\widehat{Z}^u_t}\Big] = \\ \E\Big[ 2\lambda K_{\theta}\widehat{Z}_\infty^2 + \frac{\sigma^2}{1+\delta^2}(2\widehat{Z}_\infty^2-\widehat{Z}_\infty-\widehat{Z}_\infty^3)\Big]. 
    \end{multline}
Noticing that the function $x \mapsto 2x^2-x-x^3$ is non-positive on $[0,1]$ and using that $Z_t^u \in (0,1)$ for any $u \in \widehat{\mathbb{G}}_{t}\backslash\{E_t\}$, we obtain
    $$  \lim_{t \to \infty}\frac{\sigma^2}{1+\delta^2}\E \Big[\widehat{Z}_t\sum_{u \in\widehat{\mathbb{G}}_{t}\backslash\{E_t\}}\Big(\widehat{Z}^u_t\Big)^2 \Big]\leq 2\lambda K_{\theta}\E \Big[ \widehat{Z}_\infty^2\Big], $$
    and thus
\begin{equation}\label{maj_sum}  \lim_{t \to \infty}\E \Big[\widehat{Z}_t\sum_{u \in\widehat{\mathbb{G}}_{t}\backslash\{E_t\}}\left(\widehat{Z}^u_t\right)^2 \Big]\leq (1+\delta^2)\frac{2\lambda K_{\theta}}{\sigma^2}. \end{equation}
 Now using that
 $$ \sum_{u \in\widehat{\mathbb{G}}_{t}\backslash\{E_t\}}\Big(\widehat{Z}^u_t\Big)^2 =(1-\widehat{Z}_t)^2 \sum_{u \in\widehat{\mathbb{G}}_{t}\backslash\{E_t\}}\Big(\frac{\widehat{Z}^u_t}{1- \widehat{Z}_t}\Big)^2  \geq \frac{(1-\widehat{Z}_t)^2}{\widehat{N}_t} $$
 we deduce, using Cauchy-Schwarz inequality
 $$  \lim_{t \to \infty}\Big(\E\Big[\widehat{N}_t^{1/2} \widehat{Z}_t^{1/2} \frac{(1-\widehat{Z}_t)}{\widehat{N}_t^{1/2}} \Big] \Big)^2 \leq  \E\Big[\widehat{N}_\infty\Big]  \lim_{t \to \infty}\E\Big[\widehat{Z}_t \frac{(1-\widehat{Z}_t)^2}{\widehat{N}_t} \Big]\leq \frac{\lambda+\mu}{\lambda} (1+\delta^2) \frac{2\lambda K_{\theta}}{\sigma^2}. $$ 
 At the end, we thus get
 $$ \lim_{\sigma^2/(1+\delta^2) \to \infty} \E\left[\widehat{Z}_\infty^{1/2}(1-\widehat{Z}_\infty) \right]=0$$
 which implies that the random variable $\widehat{Z}_\infty^{1/2}(1-\widehat{Z}_\infty)$ converges in probability to $0$ when $\sigma^2/(1+\delta^2)$ goes to infinity. Adding \eqref{maj_sum} implies that
$$  \lim_{t \to \infty}\E \Big[\sum_{u \in\widehat{\mathbb{G}}_{t}\backslash\{E_t\}}\widehat{Z}^u_t\sum_{u \in\widehat{\mathbb{G}}_{t}\backslash\{E_t\}}\left(\widehat{Z}^u_t\right)^2 \Big]\leq (1+\delta^2)\frac{2\lambda K_{\theta}}{\sigma^2}.$$
Hence 
$ \sum_{u \in\widehat{\mathbb{G}}_{t}\backslash\{E_t\}}(\widehat{Z}^u_t)^2 $
converges in probability to $0$ when $\sigma^2/(1+\delta^2)$ goes to $\infty$, which allows us to conclude that the random variable $1-\widehat{Z}_\infty$ converges in probability to $0$ when $\sigma^2$ goes to infinity. This ends the proof.
\end{proof}

Now we can establish the results presented in Section \ref{section: long-time}.
\subsubsection{Proof of Proposition \ref{prop: basic qts}}
Notice that the process $N_t$ has the same law as an $M/M/\infty$ queue with initial state $1$, until it reaches $0$. The exact law of $N_t$ seems out of reach (we may find its Laplace transform in the literature, but it is not easily invertible), but we know its mean extinction time, which is enough for our purposes. If we denote by $T_0 := \inf \{t \geq 0, N_t=0\}$, we derive from \cite{roijers2007analysis} Equation \eqref{mean_ext_time}
$$ \E_{\Z_0}[T_0]= \frac{1}{\lambda}\sum_{j\geq 1}\left(\frac{\lambda}{\mu}\right)^j\sum_{k=0}^{N_0-1} \frac{k!}{(j+k)!} \leq N_0 \frac{e^{\lambda/\mu}-1}{\lambda}. $$
The inequality is an equality if $\langle \Z_0, 1 \rangle = 1$.
Applying Markov Inequality, we deduce
\begin{equation}\label{eq: P(G)}
    \mathbb{P}\left(\mathbb{G}(t) \neq \emptyset \right) = \P(T_0\geq t)\leq 1 \wedge \frac{\E_{\Z_0}[T_0]}{t} \leq 1 \wedge N_0\frac{e^{\lambda/\mu}-1}{\lambda t}
\end{equation}
which goes to $0$ when $t$ goes to infinity.
A simple lower bound on $\mathbb{P}(\G(t) \neq \emptyset)$ can be derived from the Many-to-One formula \eqref{cor: mto colony}
\begin{equation}\label{eq: mto P survie}
    \mathbb{P}\Big(\mathbb{G}(t) \neq \emptyset \Big) = \E\Big[ \mathbbm{1}_{\G(t) \neq \emptyset}\frac{\sum_{u \in \G(t)}X^u_t}{R_t}\Big] = R_0e^{(a-\mu)t}\E\Big[\frac{1}{\widehat{R}_t}\Big].
\end{equation}
Then, from Jensen inequality we have
$$
\E\left[{1}/{\widehat{R}_t}\right] \geq \exp\left(-\E\left[ \ln \widehat{R}_t \right] \right).
$$
Using Itô formula, we obtain the SDE for $\ln \widehat{R}_t $ and taking the expectation we get
\begin{equation}\label{eq: E ln(r)}
   \frac{\text{d}\E[\ln \widehat{R}_t ]}{\text{d}t} \leq a - \mu + \frac{\sigma^2}{2(1+\delta^2)}\Big(\delta^2 - E\Big[\frac{1}{\widehat{N}_t}\Big] \Big) + \Big(\frac{\sigma^2}{1+\delta^2}+\mu\Big)\E[\widehat{Z}_t].  
\end{equation}

We now use the almost sure bounds $\E[\widehat{Z}_t] \leq 1$ and $E[{1}/{\widehat{N}_t}] \geq 0$. Then \eqref{eq: E ln(r)} becomes
$$
\E[\ln \widehat{R}_t ] \leq \E[\ln\widehat{R}_0 ] + \left(a + \frac{\sigma^2}{1+\delta^2}\left(\frac{\delta^2}{2} +1\right) \right)t.
$$
Notice that the law of the population size $N_t$ only depends on the parameters $\lambda$ and $\mu$. We can thus inject the previous inequality in \eqref{eq: mto P survie} and take $\sigma = 0$. This reads 
\begin{equation}\label{eq: low bound survie}
    \mathbb{P}\left(\mathbb{G}(t) \neq \emptyset \right) \geq e^{-\mu t}.
\end{equation}

From the branching dynamics of the colonial process, we obtain the differential equation
$$
\text{d}\mathbb{E}_{\nu_0}[ N_t] /\text{d}t= \lambda\mathbb{P}\left(\mathbb{G}(t) \neq \emptyset \right) - \mu \mathbb{E}_{\nu_0}\left[ N_t\right].
$$

Using the previously established bounds \eqref{eq: P(G)} and \eqref{eq: low bound survie} in the previous equality we get that
\begin{equation}\label{eq: bounds E N_t}
    \lambda e^{-\mu t}- \mu \mathbb{E}_{\nu_0}\left[ N_t\right] \leq \frac{\text{d}}{\text{d}t}\mathbb{E}_{\nu_0}\left[ N_t\right] \leq \lambda \left(1 \wedge N_0\frac{e^{\lambda/\mu}-1}{\lambda t}\right) - \mu \mathbb{E}_{\nu_0}\left[ N_t\right].
\end{equation}
Integrating the lower bound gives that
$$
 \left(\lambda t +N_0\right)e^{-\mu t} \leq \mathbb{E}_{\nu_0}\left[ N_t\right].
$$
For the upper bound, we integrate on $[0,t_1]\cup(t_1,t]$, where $t_1 := N_0(e^{\lambda/\mu}-1) / \lambda$.
Integrating on $[0,t_1]$, we have that
$$
 \mathbb{E}_{\nu_0}\left[ N_t\right] \leq \frac{\lambda}{\mu}\left(1 -e^{-\mu t}\right) + N_0e^{-\mu t}.
$$
Integrating on $(t_1,t]$, we have that
$$
 \mathbb{E}_{\nu_0}\left[ N_t\right] \leq N_0\left(e^{\frac{\lambda}{\mu}} -1\right) \left( E_i(\mu t) -E_i(\mu t_1)\right)e^{-\mu t}+ n_{t_1}e^{-\mu (t-t-1)}
$$
where $n_{t_1} := (1-e^{-\mu t_1})\lambda / \mu + N_0e^{-\mu t_1}$ and the function $E_i$ is the exponential integral function defined for all $x \in \mathbb{R}_+^*$ as
$$
E_i(x) := \int_{-\infty}^x\frac{e^t}{t}\text{d}t.
$$
We conclude the proof by noticing that $E_i(x)e^{-x} \sim 1/x$ as $x$ goes to infinity \cite{abramowitz68}.

\subsubsection{Proof of Proposition \ref{prop:var}}
First notice that the expression \eqref{eq: basic qts} of the mean resources can be obtained with classical means from the SDEs describing the dynamics of the colonies. However the result can be obtain as a direct application of the Many-to-One formula \eqref{eq: mto} with the function $f(x) = x$.
Now we will establish boundaries on the variance of the total resources in the population.
From the Many-to-One formula \eqref{eq: mto}, with the function  $f(x) = xR$ we have
\begin{equation}\label{eq: distrib lower bound}
     \mathbb{E}_{\nu_0}\left[R^2_t \right] = R_0e^{(a-\mu)t} \mathbb{E}_{\nu_0}\left[ \widehat{R}_t \right].  
\end{equation}
We can thus simply find bounds on $Var[R_t]$ by finding inequalities on the mean resources of the spinal process.
From the SDE \eqref{eq: sde total biomass} we have
\begin{equation}\label{eq: E[R_t hat]}
    \frac{\text{d}\E[ \widehat{R}_t]}{\text{d}t} = \E[\widehat{R}_t]\Big(a - \mu +\frac{\sigma^2\delta^2}{1+\delta^2} \Big) + \E[\widehat{Y}_t]\Big( \mu +\frac{\sigma^2}{1+\delta^2} \Big). 
\end{equation}
We now take the expectation in \eqref{eq: eds indiv spine 2} we have that 
\begin{equation}\label{eq: edo E[Y]}
    \frac{\text{d}\E[ \widehat{Y}_t]}{\text{d}t} = \E\Big[\widehat{Y}_t\Big(a+\sigma^2 - 2\lambda\E[\theta(1-\theta)] \frac{\widehat{Y}_t}{\widehat{R}_t}\Big)\Big]. 
\end{equation}
Then we use, in \eqref{eq: edo E[Y]}, the fact that $0 \leq \widehat{Z}_t \leq 1$ a.s. to obtain the following bounds on $\E[\widehat{Y}_t]$
\begin{equation*}
    \E[\widehat{Y}_t]\left(a+\sigma^2 - 2\lambda\E[\theta(1-\theta)] \right) \leq {\text{d}\E[ \widehat{Y}_t]}/{\text{d}t} \leq \E[\widehat{Y}_t](a+\sigma^2 ). 
\end{equation*}
Recall that $K_{\theta} := \E[\theta(1-\theta)]$.
Integrating these inequalities and injecting into \eqref{eq: E[R_t hat]} yield
$$
 \frac{\text{d}\E_{\Z_0}[ \widehat{R}_t]}{\text{d}t} \leq \E[\widehat{R}_t](a - \mu +\frac{\sigma^2}{1+\delta^2}\delta^2 ) + \Big( \mu +\frac{\sigma^2}{1+\delta^2} \Big)\E[\widehat{Y}_0]e^{( a +\sigma^2 )t},
$$
that integrates into
\begin{equation*}
         \E[\widehat{R}_t] \leq e^{\left( a +\sigma^2 \right)t} + (R_0 -1)e^{\left(a - \mu +\frac{\sigma^2}{1+\delta^2}\delta^2 \right)t}.
\end{equation*}
And
$$
\frac{\text{d}\E_{\Z_0}[ \widehat{R}_t]}{\text{d}t} \geq  \E[\widehat{R}_t]\left(a - \mu +\frac{\sigma^2\delta^2}{1+\delta^2} \right) + \left( \mu +\frac{\sigma^2}{1+\delta^2} \right)\E[\widehat{Y}_0]e^{( a +\sigma^2 - 2\lambda K_{\theta} )t},
$$
that integrates into
\begin{equation*}
         \E[\widehat{R}_t] \geq \frac{\mu +\frac{\sigma^2}{1+\delta^2}}{\mu + \frac{\sigma^2}{1+\delta^2} - 2 \lambda K_{\theta}}\left(e^{\left( a +\sigma^2 -  2 \lambda K_{\theta} \right)t} - e^{\left(a - \mu +\frac{\sigma^2}{1+\delta^2}\delta^2 \right)t} \right) + R_0e^{(a - \mu +\frac{\sigma^2}{1+\delta^2}\delta^2 )t}
\end{equation*}
if $\mu + {\sigma^2}/(1+\delta^2) \neq 2 \lambda K_{\theta}$, and into
$   \E[\widehat{R}_t] \geq (2 \lambda K_{\theta}t + R_0)e^{(a+ \sigma^2 - 2 \lambda K_{\theta} )t}$
if $\mu + {\sigma^2}/(1+\delta^2) = 2 \lambda K_{\theta}.$
We conclude the proof by using \eqref{eq: distrib lower bound} and the definition of the variance.

\subsubsection{Proof of Proposition \ref{prop: sharing resources}}
To obtain the quantity of resources owned by an individual picked uniformly at random in the population, we apply \eqref{eq: sampling} to $F = x^u$. It yields
$$
\E_{\nu_0}\left[\mathbbm{1}_{\left\{\mathbb{G}(t) \neq \emptyset\right\}}X^{U_t}_t\right] = R_0e^{(a-\mu)t}\E_{\nu_0}\left[\frac{1}{\widehat{N}_t}\right] \sim  R_0e^{(a-\mu)t} \frac{\mu}{\lambda}\left(1 - e^{-\lambda/\mu}\right).
$$
From the Many-to-One formula \eqref{eq: mto}, with the function 
$f(x,\nu) := x\mathbbm{1}_{\left\{\frac{x}{\langle \nu, I_d \rangle} \geq \gamma \right\}}$, we get 
\begin{equation}\label{eq: distribution ratios}
  \mathbb{E}_{\nu_0}\Bigg[ \sum_{u \in \mathbb{G}(t)} X^u_t \mathbbm{1}_{\left\{\frac{X^u_t}{R_t} \geq \gamma \right\}} \Bigg] =\E_{\nu_0}[R_t]\ \mathbb{P}_{\nu_0}\left(\frac{\widehat{Y}_t}{\widehat{R}_t} \geq \gamma \right).
\end{equation}
Equation \eqref{tout_ds1} is a direct consequence of point $(1)$ of Lemma \ref{prop_correl} and \eqref{pastout_ds1bis} is a direct consequences of point $(2)$ of Lemma \ref{prop_lim_frac} with $\beta=0$.
Equation \eqref{tout_ds1bis} is a consequence of the Many-to-One formula \eqref{eq: mto}, with the function 
$f(x,\nu) := x^2/R$ and the inequalities
$$ \sum_{u \in \mathbb{G}(t)} \frac{(X^u_t)^2}{R_t}\leq \sum_{u \in \mathbb{G}(t)}\sup_{u \in \mathbb{G}(t)} X^u_t \frac{X^u_t}{R_t}= \sup_{u \in \mathbb{G}(t)} X^u_t,$$
as well as point $(3)$ of Lemma \ref{prop_lim_frac}.

\section*{Acknowledgments.}
We thank J.-F. Delmas and S. C. Harris for their diligent proofreading and constructive comments on this work. We also thank L. Coquille and A. Marguet for the interesting discussions we had on this project. \\
This work is supported by the French National Research Agency in the framework of the "France 2030" program (ANR-15-IDEX-0002) and by the LabEx PERSYVAL-Lab (ANR-11-LABX-0025-01). We acknowledge partial support by the Chair "Modélisation Mathématique et Biodiversité" of VEOLIA-Ecole Polytechnique-MNHN-F.X.

\printbibliography  

\end{document}